\theoremstyle{plain}
\DeclareMathOperator{\Cpl}{Cpl}
\newtheorem{theorem}{Theorem}[section]
\newtheorem{lemma}[theorem]{Lemma}
\newtheorem{corollary}[theorem]{Corollary}
\newtheorem{question}[theorem]{Question}
\newtheorem{proposition}[theorem]{Proposition}
\theoremstyle{definition}
\newtheorem{definition}[theorem]{Definition}
\theoremstyle{remark}
\newtheorem{remark}[theorem]{Remark}
\newtheorem{example}[theorem]{Example}
\newcommand{\VK}{\kappa}
\newcommand{\sk}{\kappa}
\DeclareSymbolFont{AMSb}{U}{msb}{m}{n}
\DeclareMathSymbol{\N}{\mathalpha}{AMSb}{"4E}
\DeclareMathSymbol{\R}{\mathalpha}{AMSb}{"52}
\DeclareMathSymbol{\Z}{\mathalpha}{AMSb}{"5A}
\DeclareMathSymbol{\D}{\mathalpha}{AMSb}{"44}
\DeclareMathSymbol{\s}{\mathalpha}{AMSb}{"53}
\DeclareMathOperator{\sph}{s}
\DeclareMathOperator{\essinf}{essinf}
\newcommand{\sinfty}{{\scriptscriptstyle{\infty}}}
\newcommand{\sR}{\scriptscriptstyle{R}}
\newcommand{\sX}{\scriptscriptstyle{X}}
\newcommand{\sM}{\scriptscriptstyle{M}}
\newcommand{\sW}{\scriptscriptstyle{W}}
\newcommand{\sN}{\scriptscriptstyle{N}}
\newcommand{\sK}{\scriptscriptstyle{K}}
\newcommand{\sZ}{\scriptscriptstyle{Z}}
\newcommand{\sPi}{\scriptscriptstyle{\Pi}}
\newcommand{\sY}{\scriptscriptstyle{Y}}
\newcommand{\sscr}{\scriptscriptstyle}
\newcommand{\mms}{X}
\newcommand{\mmsc}{[X
]}
\newcommand{\mmsi}{(X_i)}
\newcommand{\mmsic}{[X_i]}
\newcommand{\pmms}{(X, o)}
\newcommand{\pmmsc}{[X
,o]}
\newcommand{\pmmsi}{(X_i,o_i)}
\newcommand{\pmmsic}{[X_i,o_i]}
\newcommand{\pmmsinf}{(X_{\sinfty},o_{\sinfty})}
\newcommand{\pmmscinf}{[X_{\sinfty},o_{\sinfty}]}
\newcommand{\mmsinfc}{[X_{\sinfty}]}
\newcommand{\dex}{\de_{\sX}}
\DeclareMathOperator{\frs}{\sin}
\DeclareMathOperator{\frc}{\cos}
\DeclareMathOperator{\vol}{vol}
\DeclareMathOperator{\Ch}{Ch}
\DeclareMathOperator{\lip}{Lip}
\DeclareMathOperator{\supp}{supp}
\DeclareMathOperator{\de}{d}
\DeclareMathOperator{\m}{m}
\DeclareMathOperator{\ric}{ric}
\DeclareMathOperator{\diam}{diam}
\DeclareMathOperator{\Ent}{Ent}
\newcommand{\ChX}{\Ch^{\sX}}
\title{Stability of metric measure spaces with  integral Ricci curvature bounds}
\author{Christian Ketterer}
\address{Department of Mathematics, 
University of Toronto,  
40 St George St,
Toronto Ontario,  
Canada M5S 2E4}
\thanks{
CK is funded by the Deutsche Forschungsgemeinschaft (DFG, German Research Foundation) -- Projektnummer 396662902, ''Synthetische Kr\"ummungsschranken durch Methoden des optimal Transports''.}
\thanks{{\it 2020 Mathematics Subject Classification.} Primary 53C21, 49Q22 . Keywords: metric measure space, integral curvature bounds, stability.}
\email{ckettere@math.toronto.edu.}
\begin{document}
\maketitle
\begin{abstract}
In this article we study  stability and compactness w.r.t.  measured Gromov-Hausdorff convergence of smooth metric measure spaces with integral Ricci curvature bounds.
More precisely, we  prove that a sequence of  $n$-dimensional Riemannian manifolds subconverges to a metric measure space that satisfies the curvature-dimension condition $CD(K,n)$ in the sense of Lott-Sturm-Villani provided the $L^p$-norm for $p>\frac{n}{2}$ of the part of the Ricci curvature that lies below $K$ converges to $0$. The results also hold  for sequences of general smooth metric measure spaces $(M,g_M, e^{-f}\vol_M)$ where  Bakry-Emery curvature replaces Ricci curvature. Corollaries are a  Brunn-Minkowski-type inequality, a Bonnet-Myers estimate and a statement on finiteness of the fundamental group. Together with a uniform noncollapsing condition the limit even satisfies the Riemannian curvature-dimension condition $RCD(K,N)$. This implies  volume and diameter almost rigidity theorems.
\end{abstract}
\tableofcontents
\section{Introduction}
Stability and compactness properties  of families of  Riemannian manifolds satisfying a uniform estimate on the $L^p$-norm of the part of their Ricci curvature that lies below a given  treshold $K\in \R$  have been a topic of increasing  interest in recent years, e.g. \cite{petersenwei, aubry}.
The crucial quanitity   for a compact  Riemannian manifold $M$ is the \textit{integral curvature excess}
\begin{align*}
\left(\frac{(\diam M)^{2p}}{\vol_M(M)}\int (\kappa-K)_{-}^p d\vol_M\right)^{\frac{1}{p}}=: k_{[M]}(\kappa,p,K)
\end{align*}
where $(\kappa-K)_-=-\min\{\kappa-K,0\}$, $\ric_M\geq \kappa $ for $\kappa\in C(M)$, $p>\frac{N}{2}$ and $ \dim_M\leq N<\infty$. 

There have been numerous publications that study Riemannian manifolds with  bounded integral curvature excess, e.g. \cite{gallotintegral, yang, yang2, petersenwei, petersenwei2,  psw, dpw, daiwei, aubry, rose2, dwzsobolev, rosewei}.  Remarkable properties  are pre-compactness under Gromov-Hausdorff convergence, effective diameter and volume growth estimates and  estimates on the spectral gap. These results  resemble corresponding statements for lower Ricci curvature bounds and typically involve error terms that  depend  on  the integral curvature excess provided it is sufficiently small. One can  construct examples by gluing  small cusps of arbitrary negative curvature on Riemannian manifold with  a lower Ricci curvature bound. These cusps can be constructed such that the curvature excess is arbitrarily small and a subsequence will converge in Gromov-Hausdorff sense to the original manifold.

In general the precompactness property suggests that there is a theory of nonsmooth limit spaces with integral curvature bounds. Moreover, the effective estimates indicate some stability property under measured Gromov-Hausdorff convergence.  
One  conjectures that the measured Gromov-Hausdorff limit of a sequence of Riemannian manifolds with vanishing integral curvature excess $k_{[M]}(\kappa,p,K)$  is a nonsmooth metric measure space with Ricci curvature bounded from below by $K$. A nonsmooth theory of lower Ricci curvature bounds  is in fact provided by the class of metric measure spaces that satisfy a curvature-dimension condition $CD(K,N)$ for $K\in \mathbb{R}$ and $N\geq 1$. This condition was   introduced in  celebrated  works  by Lott, Sturm and Villani \cite{stugeo1, stugeo2, lottvillani}.

In this article we  confirm the previous conjecture. Our main result reads as follows.
\begin{theorem}\label{th:riemcase}Let $D\geq 0$, $N\geq 2$ and $K\in \mathbb{R}$. 

For every $\epsilon>0$ there exists $\delta>0$ such that the following holds. Let
$(M,g_M)$ be a Riemannian manifold
such that $\ric_{\sM}\geq \kappa$ for $\kappa\in C(M)$, $\diam M\leq D$, $\dim M\leq N$ and
$$
{k}_{[M, d_{\sM},\vol_{\sM}]}(\kappa, p,K)<\delta\ \mbox{ 
for $p>\frac{N}{2}$ and $p\geq 1$ if $N=2$.}
$$
Then there exists a metric measure space $\mms$ that satisfies the curvature-dimension condition $CD(K,N)$ and 
\begin{align*}
\mathbb{D}([M,\de_M,\vol_M], [\mms])<\epsilon. 
\end{align*}
\end{theorem}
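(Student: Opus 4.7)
The natural approach is a compactness-plus-stability argument, carried out by contradiction. Assume the statement fails: there exist $\epsilon>0$ and a sequence $(M_n, g_n)$ of Riemannian manifolds with $\ric_{M_n}\geq \kappa_n$, $\diam M_n\leq D$, $\dim M_n\leq N$, and $k_{[M_n,\de_{M_n},\vol_{M_n}]}(\kappa_n,p,K)\to 0$, yet $\mathbb{D}([M_n,\de_{M_n},\vol_{M_n}],[\mms])\geq \epsilon$ for every $CD(K,N)$-space $\mms$. By the Gromov-type precompactness available for families of Riemannian manifolds with uniformly bounded integral curvature excess, bounded diameter and uniform dimension bound (after a harmless volume normalisation), a subsequence converges in the $\mathbb{D}$-sense to a metric measure space $[X_\infty]$. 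The objective is then to show that $X_\infty$ actually satisfies $CD(K,N)$, contradicting the standing assumption.

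The input on each smooth $M_n$ is the pointwise variable Ricci bound $\ric\geq \kappa_n$, which is equivalent to a distorted $CD(\kappa_n,N)$ condition: along every $W_2$-geodesic $(\mu_t^n)$ between two regular probability measures on $M_n$ with dynamical optimal plan $\Pi^n$, the Rényi/entropy functional satisfies a convexity inequality expressed through the variable distortion coefficients $\tau^{(t)}_{\kappa_n,N}(|\dot\gamma|)$ evaluated along the transport geodesics $\gamma\in\supp\Pi^n$. Rewriting $\kappa_n = K - (\kappa_n-K)_-$ and using monotonicity of the distortion coefficients in the curvature parameter, one can compare this $\kappa_n$-distorted inequality with the $K$-distorted inequality that defines $CD(K,N)$. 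The resulting discrepancy is controlled, after an elementary Taylor expansion of $\tau^{(t)}_{\cdot,N}$ in its first argument, by integrals of the form
\[
\int_0^1\!\!\int_{M_n\times M_n} (\kappa_n-K)_{-}(\gamma_s)\,|\dot\gamma|^2\, d\Pi^n(\gamma)\, ds.
\]

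The key technical step is to bound such Wasserstein-geodesic averages of $(\kappa_n-K)_-$ by the global integral excess $k_{[M_n]}(\kappa_n,p,K)$. This is the role of a Petersen--Wei-type segment inequality: integrals of a nonnegative function along minimising geodesics of a transport plan are controlled by its $L^p$-norm on the manifold, with an explicit factor involving $\diam M_n$, $\vol M_n$ and $p$. Combined with $\diam M_n\leq D$ and $p>N/2$, this yields on each $M_n$ an \emph{approximate} $CD(K,N)$-inequality with an error of order $C(D,N,p)\,k_{[M_n]}(\kappa_n,p,K)\to 0$. This quantitative reduction from an integral curvature bound to an almost-synthetic curvature inequality is, to my mind, the principal technical obstacle and is where essentially all the curvature-excess analysis is consumed.

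Finally one passes to the $\mathbb{D}$-limit. For this, one uses that under $\mathbb{D}$-convergence of $[M_n]$ to $[X_\infty]$ the space $\mathcal{P}_2$ and the optimal transport machinery are continuous in the sense of Lott--Sturm--Villani: given regular measures $\mu_0,\mu_1$ on $X_\infty$, one approximates them by measures $\mu_0^n,\mu_1^n$ on $M_n$, the corresponding $W_2$-geodesics subconverge to a $W_2$-geodesic on $X_\infty$, and the Rényi functionals are jointly lower semicontinuous. Applying this to the smooth approximate inequalities and sending $n\to\infty$ produces the exact $CD(K,N)$-inequality on $X_\infty$, yielding the desired contradiction and finishing the proof.
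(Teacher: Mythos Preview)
Your overall architecture is exactly that of the paper: argue by contradiction, extract a convergent subsequence via precompactness, establish on each $M_n$ an approximate $CD(K,N)$ displacement convexity inequality with an error controlled by $k_{[M_n]}(\kappa_n,p,K)$, and pass to the limit using lower semicontinuity of the R\'enyi entropy and upper semicontinuity of the $T^{(t)}_{K,N}$-functional. So the strategy is right.

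Where your sketch diverges from the paper is in the derivation of the approximate inequality, and this is precisely where the technical content lives. You propose an ``elementary Taylor expansion'' of $\tau^{(t)}_{\kappa,N}$ in $\kappa$ and then a ``Petersen--Wei-type segment inequality'' to control the resulting linear error $\int\!\!\int (\kappa-K)_-(\gamma_s)|\dot\gamma|^2\,d\Pi\,ds$. Neither step works as stated. The coefficient $\tau^{(t)}_{\kappa,N}(\theta)$ depends on $\kappa$ through a nonlinear ODE, and there is no elementary linearisation yielding a useful first-order remainder; the paper instead proves a sharp one-dimensional comparison (Lemma~\ref{lem:B} and Corollary~\ref{cor:disest}) of the form
\[
\tau^{(t)}_{K,N}(\theta)-\tau^{(t)}_{\kappa,N}(\theta)\le C\Big(\theta^{2p}\!\int_0^1(\kappa(s\theta)-K)_-^p\,\sigma^{(s)}_{\kappa/(N-1)}(\theta)^{N-1}ds\Big)^{\frac{1}{N(2p-1)}}\Big(\int_t^1\tau^{(s)}_{\kappa,N}(\theta)^N ds\Big)^{\frac{2p-2}{N(2p-1)}},
\]
obtained via the Riccati-type analysis of Petersen--Wei--Aubry (Proposition~\ref{petersenweiaubry}). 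The exponent $p$ already appears here, not after a later H\"older step.

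Second, there is no off-the-shelf segment inequality that converts an integral of a nonnegative function along optimal transport geodesics weighted by $\Pi$ into its $L^p$-norm against $\vol_M$; the Cheeger--Colding segment inequality concerns geodesics between \emph{volume}-weighted endpoint pairs, not $\Pi$-weighted ones. The paper handles this by disintegrating $\vol_M$ via the co-area formula along level sets $\{\phi_t=a\}$ of the Kantorovich potential and then applying the area formula to the transport map restricted to each level (Section~5, Proposition~\ref{prop:areaformula}). This, together with the Monge--Amp\`ere inequality, is what converts the $\Pi$-integral of $(\kappa-K)_-^p$ weighted by the Jacobi determinants into the genuine $L^p(\m_M)$-norm appearing in $k_{[M]}$. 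Your ``segment inequality'' stands in for this entire disintegration argument, which is the heart of Theorem~\ref{resultA}.
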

Here $\mathbb D$ is  the so-called {\it transport distance} introduced by Sturm in \cite{stugeo1}. It is a distance on  isomorphism classes of metric measure spaces and metrizes the notion of {\it measured Gromov convergence} that was introduced in \cite{gmsstability}. 
The isomorphism class of a metric measure space $X$ is denoted with $[X]$.
The theorem does not follow from the  compactness property of a family of metric measure spaces that satisfy a uniform lower Ricci curvature bound
since bounds on the integral curvature excess don't imply uniform lower Ricci curvature bounds in general.

Now let us recall  that a bound of the form $\ric_M\geq \kappa$ for $\kappa\in C(M)$ together with $N\geq \dim_M$ can be characterized in terms of a  curvature-dimension condition $CD(\kappa,N)$ that was  introduced by the author in \cite{ketterer5}. For smooth metric measure spaces $(M,d_M,\m_M)$, that is a Riemannian manifold  equipped with a smooth measure $e^{-f}\vol_M=:\m_M$ such that $f\in C^\sinfty(M)$, the same characterization holds with $\ric_M$ replaced by the so-called $N$-Bakry-Emery curvature. In \cite{ketterer5} it was observed that the condition $CD(\kappa,N)$ for a continuous function $\kappa$  makes sense for any metric measure space $X$ and generalizes the theory of Lott-Sturm-Villani to a setup of variable lower Ricci curvature bounds.

Then Theorem \ref{th:riemcase}  is  a special case of the following theorem  for pointed, smooth, metric measure spaces $(M,o)$ where $o$ is a fixed base point in $M$. For this setup we consider the integral curvature excess centered at $o$ with radius $R>0$:
\begin{align*}
\left(\frac{R^{2p}}{\m_M(B_1(o))}\int_{B_R(o)} (\kappa-K)_{-}^p d\m_M\right)^{\frac{1}{p}}=: k_{[M,o]}(\kappa,p,K,R).
\end{align*}
We say a pointed metric measure space $(X,o)$ is normalized if $\m_{\sX}(B_1(o))=1$. We  prove the following result.
\begin{theorem}\label{th:mainmain}Let $N\in [2,\infty)$ and $K\in \mathbb{R}$. 

Let $\{(M_i,o_i)\}_{i\in\mathbb{N}}$ be a sequence of smooth, normalized, pointed metric measure spaces that satisfy a condition $CD(\kappa_i,N)$ for $\kappa_i\in C(X_i)$
and
$$
k_{[M_i,o_i]}(\kappa_i,p,K,R)\rightarrow 0 \mbox{ when  } i\rightarrow \infty \ \ \forall R>0, p>\frac{N}{2} \mbox{ and }p\geq 1 \mbox{ if }N=2.
$$
Then the isomorphism classes $\{[M_i,o_i]\}_{i\in\mathbb{N}}$ subconverge in pointed measured Gromov sense  to 
the isomorphism class of a pointed, normalized metric measure space $\pmms$ that satisfies the condition $CD(K,N)$.
\end{theorem}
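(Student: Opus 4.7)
The plan is to proceed in two stages: first establish precompactness in the pointed measured Gromov topology, then identify the subsequential limit as a $CD(K,N)$ space by an error-controlled passage to the limit in the entropy inequality.

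\textit{Stage 1: precompactness.} Since each $(M_i,o_i)$ satisfies $CD(\kappa_i,N)$ with $\kappa_i \ge K - (\kappa_i - K)_-$, I would invoke a Petersen--Wei-type volume comparison adapted to the variable synthetic curvature setting: on $B_R(o_i)$ the ratio $\m_{M_i}(B_r(o_i))/v_{K,N}(r)$ is almost monotone, with multiplicative error quantitatively controlled by $k_{[M_i,o_i]}(\kappa_i,p,K,R)$. Together with the normalization $\m_{M_i}(B_1(o_i))=1$, this yields a uniform doubling constant on balls of every fixed radius centered at $o_i$. Gromov's precompactness then produces a subsequence converging in pointed measured Gromov sense to a normalized pmms $\pmmsinf$.

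\textit{Stage 2: the limit satisfies $CD(K,N)$.} Fix compactly supported $\mu_0,\mu_1$ on $X_\infty$ with bounded densities w.r.t.\ the limit measure. Realize the pointed measured Gromov convergence in a common metric space and approximate by measures $\mu_0^i,\mu_1^i$ on $M_i$ with uniformly controlled densities. The condition $CD(\kappa_i,N)$ then yields a $W_2$-geodesic $(\mu_t^i)$ and an optimal dynamical plan $\Pi_i$ along which the $N$-R\'enyi entropy satisfies a distortion inequality whose coefficients are determined by integrals of $\kappa_i$ along transport geodesics $\gamma\in\supp\Pi_i$. The Brunn--Minkowski inequality built into $CD(\kappa_i,N)$ further ensures that the interpolating densities remain bounded on the relevant ball.

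\textit{Error control.} Decompose $\kappa_i = K + (\kappa_i - K)_+ - (\kappa_i - K)_-$. The positive part only strengthens the inequality, so the only loss from replacing $\kappa_i$ by $K$ in the distortion coefficients is controlled by $\int_0^1 (\kappa_i - K)_-(\gamma_s)\,ds$ along $\gamma\in\supp\Pi_i$. By a segment-type inequality, available in the smooth weighted setting under the uniform Bishop--Gromov comparison from Stage 1, the $\Pi_i$-average of this integral is bounded by $\|(\kappa_i - K)_-\|_{L^p(B_R(o_i))}$ times a constant depending on $R$, the diameters of $\supp\mu_0^i,\supp\mu_1^i$ and their density bounds. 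By hypothesis this tends to zero. Combining this with lower semicontinuity of the R\'enyi entropy under pointed measured Gromov convergence passes the modified inequality to $\pmmsinf$, giving the $CD(K,N)$ inequality for the chosen pair $(\mu_0,\mu_1)$. A density argument in the space of admissible test measures completes the identification.

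\textit{Main obstacle.} The delicate step is the segment inequality that converts $L^p$-smallness of $(\kappa_i - K)_-$ on a ball into geodesic-averaged smallness against the optimal plan, with constants that are uniform in $i$ and depend only on the vanishing integral curvature excess. This requires simultaneous control of the Jacobian-type dispersion of transport geodesics (via the Bishop--Gromov comparison from Stage 1) and of the density of the interpolants $\mu_t^i$ (via Brunn--Minkowski from $CD(\kappa_i,N)$). It is the coupling of these two ingredients into a single quantitative estimate that is the technical core of the proof, and what ultimately ensures that no residual error survives the passage to the pointed measured Gromov limit.
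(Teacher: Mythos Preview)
Your two-stage architecture matches the paper's, and Stage~1 is essentially what the paper does (Proposition~3.1 and Corollary~3.4). The gap is in the error-control step of Stage~2.

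First, the claim that ``the only loss from replacing $\kappa_i$ by $K$ in the distortion coefficients is controlled by $\int_0^1 (\kappa_i - K)_-(\gamma_s)\,ds$'' is not correct as stated: the coefficients $\tau_{\kappa,N}^{(t)}(\theta)$ depend on $\kappa$ through solutions of a Sturm--Liouville ODE, and the dependence is genuinely nonlinear. The paper's Section~4 handles this via the Petersen--Wei--Aubry Riccati estimate: with $\psi_{K,N-1} = \max\{0,g' - g'_{K,N-1}\}$ for $g = \log \frs_{\kappa/(N-1)}^{N-1}$, one gets $\psi_{K,N-1}^{2p-1}\omega \le C\int (\kappa - K)_-^p\,\omega$, an inequality that only closes when $p > N/2$. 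This is then converted (Corollary~4.5) into a pointwise bound on $\tau_{K,N}^{(t)}(\theta) - \tau_{\kappa,N}^{(t)}(\theta)$ whose right side carries the weight $\sigma_{\kappa/(N-1)}^{(s)}(\theta)^{N-1}$. Your linear line integral misses this structure and gives no indication of why the $p > N/2$ threshold should appear.

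Second, the conversion from a $\Pi_i$-average to a volume $L^p$-norm is not accomplished by a segment inequality in the paper. Instead, $\vol_{M_i}$ is disintegrated along level sets $\{\phi_{t_0}=a\}$ of the Kantorovich potential via the Co-area formula, and then the Area formula is applied to the Lipschitz flow $(x,t)\mapsto T_{t_0}^t(x)$ restricted to each rectifiable level set (Section~5). The point is that the weight $\sigma_{\kappa/(N-1)}^{(s)}(\theta)^{N-1}$ coming from the 1D estimate matches, via the Monge--Amp\`ere equality, the transverse Jacobian $\det B_x(s)$, so the geodesic integral collapses \emph{exactly} to $\int_{B_{2R}}(\kappa - K)_-^p\,d\m$ (Theorem~6.1). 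A Cheeger--Colding segment inequality would require a uniform lower Ricci bound to control geodesic spreading, which is precisely what you do not have here; your proposed circular use of the Stage~1 Bishop--Gromov bound does not obviously supply this. Finally, you do not address the case $K>0$, where the coefficients blow up as $|\dot\gamma|\to\pi_{K/(N-1)}$: the paper first proves the limit is $MCP(K,N)$ (hence obtains a diameter bound), then truncates long transport geodesics and shows their contribution vanishes using uniqueness of antipodal points.
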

The assumption $p>\frac{N}{2}$ for $N>2$ is sharp since Aubry showed in \cite{aubry} that compact $N$-dimensional Riemannian manifolds with $k_{[M]}(\kappa, \frac{N}{2}, K)<\epsilon$ are dense w.r.t.  Gromov-Hausdorff convergence among all compact length spaces.

Instead of requiring that the part of the Ricci curvature below $K$ is in some $L^p$-space for $p>\frac{N}{2}$, one can also assume that it satisfies a  {\it Kato condition}. This condition is strictly weaker than the previous $L^p$-condition but  Riemannian manifolds satisfying such a condition still have  properties that resemble the ones under integral curvature bounds, e.g. \cite{rosestollmann, rosestollmannsurvey}. Hence,  an extention of our theorem in this direction seems to be tangible.

As part of the proof of Theorem \ref{th:mainmain} we derive a displacement convexity inequality (Theorem \ref{resultA}) that implies the following, new Brunn-Minkowski-type inequality under integral curvature bounds.

\begin{corollary}[Brunn-Minkowski inequality]\label{cor:bm}
Let $M$ be a smooth, normalized mm space that satisfies $CD(\kappa,N)$ for $\kappa\in C(M)$ and $N\geq 2$. 
Let $p>{\textstyle \frac{N}{2}}$ such that $k_{[\sM]}(p,0)<\infty$ and 
let $A_0,A_1\subset M$. 
Then there exists a positive constant $C=C(p,N,K)$ (see Remark \ref{rem:const}) such that
\begin{align}
\m_{\sM}(A_t)^{\frac{1}{N}}&\geq (1-t) \m_{\sM}(A_0)^{\frac{1}{N}}+ t\m_{\sM}(A_1)^{\frac{1}{N}}\nonumber\\
&\hspace{1.5cm}-2C^{\frac{1}{N(2p-1)}} k_{[M]}(\kappa,p,0)^{\frac{p}{N(2p-1)}} \ \forall t\in (0,1)\nonumber
\end{align}
where $A_t=\{\gamma(t): \gamma\in \mathcal G(M), \gamma(0)\in A_0, \gamma(1)\in A_1\}$.
\end{corollary}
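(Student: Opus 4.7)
The plan is to deduce the corollary from Theorem \ref{resultA} via the classical optimal-transport route by which a Lott-Sturm-Villani displacement convexity implies a Brunn-Minkowski inequality, while tracking the extra error contributed by the integral of $(\kappa)_-$ along transport geodesics. After the trivial cases $\m_M(A_0)\m_M(A_1) = 0$ and a standard inner-regularity reduction I set $\mu_i := \m_M(A_i)^{-1}\m_M|_{A_i}$ for $i = 0, 1$, so that the densities satisfy $\rho_i \equiv \m_M(A_i)^{-1}$ on their supports, and let $(\mu_t)_{t \in [0,1]}$ be the $L^2$-Wasserstein geodesic joining $\mu_0$ and $\mu_1$, induced by an optimal dynamical plan $\pi$ concentrated on $\mathcal{G}(M)$. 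Since $\mu_t$ is supported in $A_t$, the key estimate $\m_M(A_t) \geq \|\rho_t\|_\infty^{-1}$ will convert an $L^\infty$-bound on $\rho_t^{-1/N}$ into the desired lower bound on $\m_M(A_t)^{1/N}$. Applying Theorem \ref{resultA} along $\pi$-a.e.\ $\gamma$ and substituting $\rho_i^{-1/N}(\gamma_i) = \m_M(A_i)^{1/N}$ gives the pointwise inequality
\[
\rho_t^{-1/N}(\gamma_t) \;\geq\; \sigminus\,\m_M(A_0)^{1/N} + \sigplus\,\m_M(A_1)^{1/N} - E(\gamma),
\]
where $E(\gamma)$ is a path-integral error built from $(\kappa)_-$ along $\gamma$.

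To reach the stated Brunn-Minkowski bound, I would first replace the $\tau$-coefficients $\sigminus$ and $\sigplus$ by their linear approximations $(1-t)$ and $t$, up to constants depending only on $N$, $p$, $K$ and an $L^\infty$ bound on $|\dot{\gamma}|$ controlled by the diameter of a fixed ball containing $A_0 \cup A_1$, absorbing the defect into $E$. I would then take the $\pi$-essential supremum of $E(\gamma)$ and apply H\"older's inequality twice --- once in the time parameter along transport geodesics and once over $\pi$ --- using absolute continuity of the marginals of $(e_s)_\ast\pi$ with respect to $\m_M$, with densities controlled by $\m_M(A_i)^{-1}$, to dominate the mean value of $(\kappa)_-$ along geodesics by $\|(\kappa)_-\|_{L^p(\m_M)}$, hence by $k_{[M]}(\kappa,p,0)$ up to the geometric normalisation. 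Balancing the $N$-th root appearing in Brunn-Minkowski against the $p$-th root in the integral hypothesis, and then inverting via $\m_M(A_t) \geq \|\rho_t\|_\infty^{-1}$, produces the advertised exponent $\tfrac{p}{N(2p-1)}$; the factor $2$ accounts for the symmetric contributions of the two $\tau$-coefficients.

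The conceptual work sits in Theorem \ref{resultA}; the deduction of the corollary from it is classical in spirit. The main obstacle within this proof is therefore the exponent bookkeeping in the last step: one must combine an $L^p$-in-space hypothesis on $(\kappa)_-$ with an $L^\infty$-along-geodesics control on $\rho_t^{-1/N}$ and then invert the nonlinear relation $\rho_t \mapsto \rho_t^{-1/N}$, all while retaining the explicit constant $C(p,N,K)$ coming from the $\tau$-coefficients for general $K \in \mathbb{R}$. Doing this cleanly is what produces both the sharp exponent $\tfrac{p}{N(2p-1)}$ and the constant described in Remark \ref{rem:const}.
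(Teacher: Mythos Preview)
There is a genuine gap, and it stems from a misreading of Theorem \ref{resultA}. That theorem is an \emph{integrated} inequality for the R\'enyi entropy $S_N(\mu_t)$; it does not give a pointwise estimate of the form $\rho_t^{-1/N}(\gamma_t)\geq \cdots - E(\gamma)$ for $\Pi$-a.e.\ $\gamma$ with a path-dependent error $E(\gamma)$. The only pointwise inequality available is the $CD(\kappa,N)$ condition itself (inequality \eqref{something}), whose distortion coefficients are $\tau_{\kappa_\gamma^\pm,N}^{(\cdot)}(|\dot\gamma|)$, not $\tau_{K,N}^{(\cdot)}$. Replacing these by the constant-$K$ coefficients via Corollary \ref{cor:disest} does produce a pathwise error $E(\gamma)$, but that error contains $\int_0^1(\kappa(\gamma(s)))_-^p\,\sigma^{(s)}_{\kappa/(N-1)}(|\dot\gamma|)^{N-1}ds$, and its $\Pi$-essential supremum is \emph{not} controlled by $\|(\kappa)_-\|_{L^p(\m_M)}$. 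Your plan to ``take the $\pi$-essential supremum of $E(\gamma)$ and apply H\"older'' is therefore circular: the whole content of the proof of Theorem \ref{resultA} (the area/coarea disintegration of Section 5) is precisely what is needed to convert the pathwise error into an $L^p(\m_M)$ quantity, and this only works after integration against $\Pi$, not pointwise.

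The paper's derivation is much shorter and avoids this problem entirely. One applies the compact version of Theorem \ref{resultA} (the unnumbered theorem immediately following it) with $K=0$ to $\mu_i=\bar\m_{A_i}$. Since $\tau_{0,N}^{(t)}(\theta)=t$ exactly, no approximation of the coefficients is needed, and the inequality reads
\[
-S_N(\mu_t)\geq (1-t)\,\m_M(A_0)^{1/N}+t\,\m_M(A_1)^{1/N}-2C^{\frac{1}{N(2p-1)}}k_{[M]}(\kappa,p,0)^{\frac{p}{N(2p-1)}}.
\]
The left-hand side is bounded above by $\m_M(\supp\mu_t)^{1/N}\leq \m_M(A_t)^{1/N}$ via Jensen's inequality, which finishes the proof. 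The exponent $\tfrac{p}{N(2p-1)}$ and the constant $C$ are already produced inside Theorem \ref{resultA}; no further bookkeeping is required at the level of the corollary.
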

Moreover we prove the following precompactness result for nonsmooth and  non-branching metric measure spaces that was  proved for Riemannian manifolds in \cite{petersenwei}.
\begin{theorem}
Consider the family $\mathcal{X}(p,K,N)$ of isomorphism classes of essentially nonbranching pointed metric measure spaces $(X,o)$ satisfying a condition 
\linebreak[4] $CD(\kappa,N)$  for some $\kappa\in C(X)$
such that ${k}_{[X,o]}(\kappa, p,K,D)\leq f(D)$ for all $[X.o]\in \mathcal X(p,K,N)$,  for all $D\geq 1$, $p>\frac{N}{2}$ and some function $f:[1,\infty)\rightarrow \mathbb R$.
Then $\mathcal{X}(p,K,N)$ is precompact in the sense of pointe measured Gromov convergence.
\end{theorem}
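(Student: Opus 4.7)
The plan is to reduce to Gromov's classical precompactness criterion by establishing a uniform Bishop--Gromov-type volume comparison with an error term controlled by $f$. The starting point is the variable curvature-dimension condition $CD(\kappa_i,N)$, which, as established earlier in the paper (through the displacement convexity analysis culminating in Theorem \ref{resultA} and the Brunn--Minkowski estimate of Corollary \ref{cor:bm}), should yield a quantitative comparison between the measure of concentric balls in $X$ and the model volumes for constant curvature $K$, with a deviation term bounded by a power of $k_{[X,o]}(\kappa,p,K,R)$.

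First I would derive the Bishop--Gromov-type inequality itself in this nonsmooth variable setting. The essentially nonbranching assumption guarantees existence and uniqueness of $L^2$-optimal couplings supported on nonbranching geodesics, so the $CD(\kappa,N)$ condition can be localized along $1$-dimensional needles; applying the displacement convexity/Brunn--Minkowski-type inequality from Theorem \ref{resultA} to the pair $A_0=\{x\}$ (or a shrinking ball around $o$) and $A_1=B_r(o)$ produces a lower bound of the form
\begin{equation*}
\frac{\m_{\sX}(B_R(o))}{\m_{\sX}(B_r(o))}\le v_{K,N}(R,r)+\Psi\bigl(k_{[X,o]}(\kappa,p,K,R)\bigr),
\end{equation*}
where $v_{K,N}$ is the constant-curvature model ratio and $\Psi$ is the explicit error function appearing in Corollary \ref{cor:bm}.

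Second, I would use the hypothesis $k_{[X,o]}(\kappa,p,K,D)\le f(D)$ for all $D\ge1$ and the implicit normalization $\m_{\sX}(B_1(o))=1$ to turn the comparison into an absolute upper bound $\m_{\sX}(B_R(o))\le V(R)$ for some function $V$ depending only on $p,K,N$ and $f$. Iterating the comparison on dyadic scales then gives a uniform doubling-type estimate on every ball of bounded radius: for each $R>0$ and $\varepsilon>0$ there exists $N(R,\varepsilon)$ such that any maximal $\varepsilon$-separated set in $B_R(o)$ has cardinality at most $N(R,\varepsilon)$ (each ball of radius $\varepsilon/2$ has measure bounded below by a constant depending only on $R,\varepsilon,p,K,N,f$, while the union sits in $B_{R+\varepsilon}(o)$ of measure at most $V(R+\varepsilon)$).

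With uniform total boundedness of $B_R(o)$ in hand, Gromov's classical precompactness theorem yields subsequential convergence in pointed Gromov--Hausdorff sense. To upgrade to pointed measured Gromov convergence one invokes the uniform two-sided volume control together with the tightness of the measures $\m_{\sX_i}$ restricted to balls, which follows from the uniform upper bound $\m_{\sX_i}(B_R(o_i))\le V(R)$; a standard diagonal argument produces a limit measure in the weak-$\ast$ sense, and the equivalence between weak convergence of measures on a Gromov--Hausdorff converging sequence and pointed measured Gromov convergence (as set up by Gigli--Mondino--Savar\'e and reviewed in the preliminaries) closes the argument. The main obstacle I anticipate is the first step: in the absence of smoothness or a lower bound on $\kappa$ alone, one must carefully exploit the essentially nonbranching hypothesis to run the needle decomposition or the $CD(\kappa,N)$-based transport argument that produces the explicit error term, since naive Bishop--Gromov comparison for constant $K$ is false under the integral hypothesis and all quantitative control must come from the displacement convexity estimate proved earlier in the paper.
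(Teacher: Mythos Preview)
Your overall strategy---establish a Bishop--Gromov-type volume comparison with an error controlled by the integral curvature excess, deduce uniform total boundedness and a uniform mass bound on balls, then invoke Gromov precompactness and pass to pmG convergence---is exactly right and matches the paper. The gap is in the first step.

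You propose to feed the pair $A_0=\{x\}$, $A_1=B_r(o)$ into Theorem~\ref{resultA} or Corollary~\ref{cor:bm} and extract the volume ratio estimate. But both of those results are stated and proved only for \emph{smooth} metric measure spaces: their proofs (Section~\ref{section:displacement}) rely on the Area and Coarea formulas applied to the Kantorovich potential $\phi_{t_0}$ (Proposition~\ref{prop:areaformula}, Lemma~\ref{lem:coarea}), on Jacobi fields, and on the Monge--Amp\`ere inequality~\eqref{ineq:ma}. None of this machinery is available for a general essentially nonbranching $CD(\kappa,N)$ space, so Theorem~\ref{resultA} cannot be invoked here. Moreover, in the paper's logical order the displacement convexity inequality is developed \emph{after} the precompactness section, precisely because it is a smooth-only tool used for the stability theorem, not for precompactness.

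What the paper actually does is prove the Bishop--Gromov estimate directly for essentially nonbranching $CD(\kappa,N)$ spaces via a spherical disintegration (Proposition~\ref{volumegrowth}): one contracts $\bar\m_R$ to $\delta_{x_0}$, uses the Cavalletti--Sturm localization to obtain the one-dimensional density inequality~\eqref{a} along a.e.\ transport ray, and then plugs in the Petersen--Wei--Aubry estimate (Proposition~\ref{petersenweiaubry}) to bound $\psi_{K,N-1}$ in terms of $\int(\kappa-K)_-^p$. Integrating over rays gives
\[
\frac{\m_{\sX}(B_R(x))}{v_{K,N}(R)}\le \frac{\m_{\sX}(B_r(x))}{v_{K,N}(r)}\,
\exp\Bigl(C(K,N,p,D)\,k_{[X,o]}(p,K,D)^{\frac{p}{2p-1}}\Bigr)
\]
for $x\in B_{R/2}(o)$ (Corollary~\ref{bishopgromov}), which is the input you need. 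From there your steps~2 and~3 go through verbatim: the normalization $\m_{\sX}(B_1(o))=1$ and the bound $k_{[X,o]}(p,K,D)\le f(D)$ give a uniform upper bound on $\m_{\sX}(B_R(o))$ and a uniform packing bound on $r$-balls in $B_{R/2}(o)$, so Theorem~\ref{th:compactness} yields pmGH subconvergence, and Theorem~\ref{th:gms} upgrades this to pmG precompactness.
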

\begin{remark}
In this respect we also mention recent work by Sturm \cite{sturmmv} where a class of metric measure spaces with lower curvature bounds in distributional sense is introduced. 
\end{remark}

In \cite{tianzhang} Tian and Zhang develop a regularity theory for limits of $n$-dimensional Riemannian manifolds $M$ such that 
\begin{align}\label{uniform2}
\mbox{$\ric_M\geq \kappa$, $k_{[M]}(\kappa, p, 0)\leq \Lambda <\infty$ with $p>\frac{n}{2}$. }
\end{align}They introduce the following
 uniform (and infinitesimal) noncollapsing condition: There exists $\varkappa>0$ such that 
\begin{align}\label{noncollapsing}
\vol_{\sM_i}(B_r(x))\geq \varkappa r^n \ \ \forall x\in M_i, \forall r\in (0,1) \mbox{ and }\forall i\in \N. 
\end{align}
This property is then used to develop a Cheeger-Colding-Naber type theory for limits of a sequence of manifolds satisfying \eqref{uniform2} and \eqref{noncollapsing}. An example by Yang \cite{yang} shows that a noncollapsing condition on a definite scale is not sufficient and the splitting theorem fails.  Assuming the uniform noncollapsing condition \eqref{noncollapsing} Tian and Zhang develop a satisfying regularity theory for limits that arise from manifolds satisfying $k_{[M]}(\kappa,p,0)\leq \Lambda <\infty$. In particular they prove  an almost splitting theorem and an almost-volume-cone-implies-almost-metric-cone theorem.  One can apply these results to our situation. The almost splitting theorem carries over to a splitting theorem of the limit space in Theorem \ref{th:mainmain}. We therefore obtain the following theorem.
\begin{theorem}\label{th:noncol}
Let $\{(M_i,o_i)\}_{i\in\mathbb{N}}$ be a sequence of $n$-dimensional, pointed Riemannian manifolds that satisfy the condition $CD(\kappa_i,N)$ for $\kappa_i\in C(X_i)$ 
such that 
$$
k_{[M_i,o_i]}(\kappa_i, p,K,R)\rightarrow 0 \mbox{ when  } i\rightarrow \infty \ \ \forall R>0
$$
with $K\in \R$,  $p>\textstyle{\frac{N}{2}}$, $p\geq 1$ if $N=2$ and \eqref{noncollapsing} holds.
Then $\{[M_i,o_i]\}_{i\in\mathbb{N}}$ subconverges in pointed measured Gromov sense to 
the isomorphism class of a pointed metric measure space $\pmms$ satisfying the Riemannian curvature-dimension condition $RCD(K,N)$.
\end{theorem}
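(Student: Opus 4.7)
The plan is to apply Theorem \ref{th:mainmain} to produce a $CD(K,N)$ limit, and then to upgrade this to $RCD(K,N)$ by invoking the noncollapsing regularity theory of Tian--Zhang \cite{tianzhang}. The first step is essentially automatic: each $M_i$ is a smooth pointed metric measure space (with $f \equiv 0$, so the Bakry--Emery curvature reduces to $\ric_{M_i}$), and the hypotheses of Theorem \ref{th:mainmain} are precisely satisfied, so along a subsequence we obtain pointed measured Gromov convergence $[M_i, o_i] \to [X_\infty, o_\infty]$ to a pointed normalized metric measure space that satisfies $CD(K,N)$. Moreover, the noncollapsing \eqref{noncollapsing} combined with the Bishop--Gromov-type upper bounds available under integral curvature control yields uniform Ahlfors $n$-regularity of the $M_i$, which passes to $\m_{X_\infty}$ under pmG convergence; hence $X_\infty$ is itself noncollapsed.

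Second, I would transfer the Tian--Zhang almost splitting theorem into a genuine splitting theorem on $X_\infty$. The vanishing excess $k_{[M_i,o_i]}(\kappa_i,p,K,R) \to 0$ supplies \eqref{uniform2} on every fixed ball (with a shift absorbing $K$) for $i$ large, so together with \eqref{noncollapsing} the framework of \cite{tianzhang} applies to each $M_i$: balls containing $\epsilon_i$-lines are $\epsilon_i$-Gromov--Hausdorff close to metric products, with $\epsilon_i \to 0$. Given any bi-infinite geodesic line in $X_\infty$, a standard diagonal argument produces approximating almost-lines in $M_i$, and the almost splitting theorem then passes to the limit to yield a genuine isometric splitting $X_\infty = \mathbb{R} \times Y$ as metric measure spaces (the measure factorizes because the pmG limit is unique and the limit of product measures is a product). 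Iterating this construction at tangent cones, together with the Tian--Zhang almost-volume-cone-implies-almost-metric-cone theorem, shows that $\m_{X_\infty}$-almost every point has a Euclidean tangent cone $\mathbb{R}^k$ for some integer $k \leq n$.

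Third, I would invoke the now-standard route from rectifiability to Riemannian structure: a $CD(K,N)$ space with Ahlfors regular reference measure and Euclidean tangent cones $\m$-a.e.\ is infinitesimally Hilbertian in the sense of Gigli, via the argument of Mondino--Naber and Gigli--Pasqualetto used in the noncollapsed Cheeger--Colding setting. Since $CD(K,N)$ plus infinitesimal Hilbertianity is by definition $RCD(K,N)$, the proof concludes. The main obstacle will be this final step, namely carefully transferring the Cheeger--Colding--Gigli proof of Hilbertianity from the pointwise Ricci setting to the integral Ricci setting. The Tian--Zhang theory supplies the analytic backbone (heat kernel bounds, segment and excess inequalities adapted to \eqref{uniform2}), but one must verify that these estimates are compatible with the synthetic $CD(K,N)$ structure on $X_\infty$ produced by Theorem \ref{th:mainmain}, so that the rectifiable Euclidean tangent structure forces the Cheeger energy to be quadratic.
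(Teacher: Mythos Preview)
Your overall strategy is correct and matches the paper's: obtain the $CD(K,N)$ limit from Theorem \ref{th:mainmain}, then use the Tian--Zhang almost splitting theorem on blow-up sequences to show that $\m_{X_\infty}$-a.e.\ point admits a Euclidean tangent cone, and conclude infinitesimal Hilbertianity. The paper carries this out exactly as you sketch for the tangent-cone step, invoking Preiss's theorem on iterated tangents (which you allude to only implicitly with ``iterating this construction'') to guarantee that the splitting process terminates in some $\R^k$ for a.e.\ $x$.

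Where you diverge is in the final step, which you flag as the ``main obstacle''. You propose going through Mondino--Naber/Gigli--Pasqualetto rectifiability machinery and worry about transferring heat-kernel and excess estimates from the integral-curvature setting to the synthetic one. The paper sidesteps all of this: it invokes directly \cite[Proposition 6.7]{cdmeetscat}, which says that for a $CD(K,N)$ space it suffices that $\m$-a.e.\ point has \emph{some} blow-up tangent isometric to a Euclidean space in order to conclude infinitesimal Hilbertianity. No Ahlfors regularity, no rectifiability chart argument, and no compatibility check between Tian--Zhang analytic estimates and the limit's $CD$ structure is needed. So your anticipated obstacle is not one; the argument closes immediately once the a.e.\ Euclidean-tangent statement is in hand. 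Your intermediate step of splitting $X_\infty$ itself along a global line is also unnecessary---only the tangent-cone splitting is used.
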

\begin{corollary}
Let $X$ be a measured Gromov-Hausdorff limit of a sequence of Riemannian manifolds satisfying \eqref{uniform2} and \eqref{noncollapsing}. Then every tangent space $T_xX$ for $x\in X$ satisfies the condition $RCD(0,N)$. In particular, $T_xX$ is an Euclidean cone over some $RCD(N-2,N-1)$ space $Y$.
\end{corollary}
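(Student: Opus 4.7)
The plan is to combine Theorem~\ref{th:noncol} applied to an appropriate blow-up sequence with Tian-Zhang's almost-volume-cone-implies-almost-metric-cone theorem and the rigidity of $RCD$-metric cones. Fix $x\in X$, pick $x_i\in M_i$ with $x_i\to x$, and let $r_j\downarrow 0$. By a standard diagonal argument applied to the pointed measured Gromov convergence $M_i\to X$ and the definition of the tangent space as a pointed measured Gromov limit of $(X,r_j^{-1}d_X,\m_X/\m_X(B_{r_j}(x)),x)$, there exists $i_j\to\infty$ such that the rescaled pointed smooth mm spaces
\[
\bigl(M_{i_j},\,\lambda_j d_{M_{i_j}},\,\vol_{M_{i_j}}\big/\vol_{M_{i_j}}(B_{1/\lambda_j}(x_{i_j})),\,x_{i_j}\bigr),\qquad \lambda_j:=1/r_j,
\]
converge in pointed measured Gromov sense to $T_xX$.

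Under the rescaling $g\mapsto\lambda_j^2 g$ the Ricci lower bound $\ric_{M_{i_j}}\geq\kappa_{i_j}$ becomes $\ric\geq\lambda_j^{-2}\kappa_{i_j}$, so each rescaled space satisfies $CD(\lambda_j^{-2}\kappa_{i_j},N)$. A direct scaling computation yields
\[
k_{[\mathrm{resc}]}(\lambda_j^{-2}\kappa_{i_j},p,0,R) = R^{2}\lambda_j^{-2}\left(\frac{1}{\vol_{M_{i_j}}(B_{1/\lambda_j}(x_{i_j}))}\int_{B_{R/\lambda_j}(x_{i_j})}(\kappa_{i_j})_-^p\,d\vol_{M_{i_j}}\right)^{1/p}.
\]
The noncollapsing hypothesis \eqref{noncollapsing} gives $\vol_{M_{i_j}}(B_{1/\lambda_j}(x_{i_j}))\geq\varkappa\lambda_j^{-n}$ for $\lambda_j\geq 1$, while the integral in the numerator is dominated by the global one, which is uniformly bounded by \eqref{uniform2}. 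Combining these estimates we obtain $k_{[\mathrm{resc}]}\leq C(R)\lambda_j^{-2+n/p}\to 0$ as $\lambda_j\to\infty$, with the decay governed precisely by the assumption $p>n/2$. Since noncollapsing is itself invariant under the rescaling, all hypotheses of Theorem~\ref{th:noncol} are in force and the theorem yields $T_xX\in RCD(0,N)$.

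For the second statement, Tian-Zhang's almost-metric-cone theorem, valid under \eqref{uniform2} and \eqref{noncollapsing}, identifies $T_xX$ with an Euclidean cone $C(Y)$ over some pointed metric measure space $Y$. Combined with $C(Y)\in RCD(0,N)$, the known cone rigidity for $RCD$ spaces (namely $C(Y)\in RCD(0,N)$ if and only if $\diam Y\leq\pi$ and $Y\in RCD(N-2,N-1)$) forces $Y\in RCD(N-2,N-1)$, which is the desired conclusion.

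I expect the main technical obstacle to lie in the diagonal extraction step and in the uniform control of the numerator $\int_{B_{R/\lambda_j}(x_{i_j})}(\kappa_{i_j})_-^p\,d\vol_{M_{i_j}}$ along the diagonal: the bound \eqref{uniform2} is global, and one must verify that no concentration of the curvature deficit occurs near $x_i$ at a scale competing with $r_j$. This should follow from a Bishop-Gromov type comparison applied to the auxiliary lower bound $\kappa_{i_j}$, but it is the point where the integral condition interacts non-trivially with the pointed measured Gromov convergence.
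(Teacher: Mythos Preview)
Your proof is correct and follows essentially the same route as the paper: blow up the sequence at points $x_i\to x$, verify via the scaling computation and the noncollapsing bound that the integral curvature excess of the rescaled spaces tends to zero, apply Theorem~\ref{th:noncol} to obtain $RCD(0,N)$ for the tangent cone, and invoke Tian--Zhang together with the cone rigidity of \cite{ketterer2} for the final statement. The paper's argument is terser but identical in substance; in particular the factor $\epsilon_i^{-n}$ in the paper's displayed computation encodes exactly your use of \eqref{noncollapsing} to control the normalizing volume.

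Your caveat in the final paragraph is unnecessary: no extra argument is needed to control the numerator. The integral $\int_{B_{R/\lambda_j}(x_{i_j})}(\kappa_{i_j})_-^p\,d\vol$ is trivially dominated by the global integral, and under \eqref{uniform2} together with the compactness and the uniform diameter/volume control already implicit in the measured Gromov--Hausdorff convergence $M_i\to X$, this global quantity is bounded independently of $j$. The decay $\lambda_j^{-2+n/p}\to 0$ then suffices without any further analysis of concentration; the paper makes no additional effort at this step.
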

We note that the example in \cite{yang}  does not satisfy $k_{[M]}(p,K)\rightarrow 0$. Therefore we  expect that  our main theorem can be improved. 
We raise the following question.
\begin{question}
In Theorem \ref{th:riemcase} can we replace the condition $CD(K,N)$ with  the Riemannian condition $RCD(K,N)$.
\end{question}
A theorem for $RCD(K,2)$ spaces by Lytchak and Stadler \cite{lyst} also yields the following corollary.
\begin{corollary}
Let $\{(M_i,o_i)\}_{i\in\mathbb{N}}$ be a sequence of $2$-dimensional, pointed Riemannian manifolds that satisfy the condition $CD(\kappa_i,2)$ for $\kappa_i\in C(X_i)$ 
such that 
$$
k_{[M_i,o_i]}(\kappa_i,1,K,R)\rightarrow 0 \mbox{ when  } i\rightarrow \infty \ \ \forall R>0
$$
with $K\in \R$ and \eqref{noncollapsing} holds.
Then $\{[M_i,o_i]\}_{i\in\mathbb{N}}$ subconverges in pointed Gromov-Hausdorff sense to a pointed $2$-dimensional Alexandrov space with curvature bounded from below by $K$.
\end{corollary}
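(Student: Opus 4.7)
The plan is to combine Theorem \ref{th:noncol} with the Lytchak--Stadler characterization \cite{lyst} of two-dimensional $RCD(K,2)$ spaces as Alexandrov spaces. First, I would apply Theorem \ref{th:noncol} with $n=N=2$ and $p=1$ (which is permitted by the proviso $p\geq 1$ if $N=2$), using the hypothesis that $k_{[M_i,o_i]}(\kappa_i,1,K,R)\to 0$ for every $R>0$ and the uniform noncollapsing \eqref{noncollapsing}. This yields a subsequence (still indexed by $i$) converging in pointed measured Gromov sense to a pointed metric measure space $(X,o)$ satisfying $RCD(K,2)$.

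Next, I would upgrade the mode of convergence from pointed measured Gromov to pointed Gromov--Hausdorff. The Bishop--Gromov inequality available under the $CD(\kappa_i,2)$ condition, combined with a uniform lower bound on $\kappa_i$ on bounded regions (provided by the integral curvature hypothesis) and the lower volume bound \eqref{noncollapsing}, gives two-sided uniform Ahlfors $2$-regularity of $\vol_{M_i}$ on balls up to any fixed radius. This uniform regularity implies that pointed measured Gromov convergence coincides with pointed measured Gromov--Hausdorff convergence on the sequence, and passes to the limit so that $(X, \de_{\sX}, \m_{\sX})$ is itself Ahlfors $2$-regular. In particular, $X$ has Hausdorff dimension $2$, so its essential dimension as an $RCD$ space is also $2$.

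Finally, since $(X,\de_{\sX},\m_{\sX})$ is an $RCD(K,2)$ space of essential dimension $2$, the theorem of Lytchak and Stadler \cite{lyst} (applicable precisely in this regime) asserts that $X$ is an Alexandrov space with curvature bounded below by $K$ in the sense of triangle comparison. Together with the Gromov--Hausdorff convergence established in the second step, this gives the statement of the corollary.

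The main obstacle will be the second step: correctly arranging the quantitative volume bounds so that measured Gromov convergence refines to Gromov--Hausdorff convergence. This relies on showing that, under $CD(\kappa_i,2)$ with vanishing integral excess and \eqref{noncollapsing}, the measures $\vol_{M_i}$ are uniformly doubling and bounded below on compact regions, uniformly in $i$; the relative volume comparison under integral Ricci bounds (as in \cite{petersenwei}) adapted to the synthetic framework developed earlier in the paper should furnish the required control, but its use in conjunction with the noncollapsing condition at the passage to the limit has to be carried out with care.
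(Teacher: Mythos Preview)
Your overall route matches the paper's: apply Theorem \ref{th:noncol} with $n=N=2$, $p=1$, then invoke Lytchak--Stadler \cite{lyst} to conclude that the $RCD(K,2)$ limit is an Alexandrov space with curvature bounded below by $K$. The paper treats this as an immediate consequence and gives no further argument.

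There is, however, one incorrect step in your middle paragraph. You write that the integral curvature hypothesis provides ``a uniform lower bound on $\kappa_i$ on bounded regions.'' It does not: the condition $k_{[M_i,o_i]}(\kappa_i,1,K,R)\to 0$ is an $L^1$ bound on $(\kappa_i-K)_-$, and this gives no pointwise or uniform lower bound on $\kappa_i$ whatsoever (indeed, the whole point of the paper is to handle sequences where such a bound is absent). Consequently the standard Bishop--Gromov inequality under $CD(\kappa_i,2)$ with a fixed lower bound on $\kappa_i$ is not available, and the reasoning as written breaks.

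The fix is the one you yourself gesture at in the final paragraph: use the \emph{integral} Bishop--Gromov comparison, which in the paper is Corollary \ref{bishopgromov} (following Proposition \ref{volumegrowth}). That estimate gives uniform relative volume bounds directly from the smallness of $k_{[M_i,o_i]}(\kappa_i,p,K,R)$, with no pointwise control on $\kappa_i$ needed. Combined with \eqref{noncollapsing} this yields the uniform doubling and lower volume bounds you want, hence uniform total boundedness; then the general fact recorded in the preliminaries (pmG convergence plus uniform total boundedness implies pmGH convergence) upgrades the mode of convergence. Once you replace the false parenthetical by a reference to Corollary \ref{bishopgromov}, the argument is complete and agrees with the paper.
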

Let us briefly explain  the main ideas in the proof of Theorem \ref{th:mainmain}.
Assume for simplicity $K=0$.
At the core of our proof is a new displacement convexity inequality for the $N$-Renyi entropy functional (Theorem \ref{resultA}). This inequality is similar to  corresponding inequalities under lower Ricci curvature bounds but involves an error term that explicitly depends on the integral curvature excess. The proof of this result consists of three steps. First, we analyse carefully  the $1$-dimensional model case. This allows us to prove estimates  for the {\it modified distortion coefficiants} $\tau_{\kappa,N}^{(t)}(\theta)$ involving $L^p$ integrals of $\kappa$. Here, $\kappa$ is a continuous function $[0,\theta]$. The coefficients $\tau_{\kappa,N}^{(t)}(\theta)$ (before Definition \ref{bigg}) play a crucial role in the definition of the condition $CD(\kappa, N)$ for a metric measure space $X$ and a variable lower curvature  bound $\kappa: X\rightarrow \mathbb R$. Second, in Section 5 we will apply the Area and Co-area formula to a  transport Kantorovich potential and derive two disintegrations of the reference measure $e^{-V}\vol_M$ (Proposition \ref{prop:areaformula} and Lemma \ref{lem:coarea}). Proposition \ref{prop:areaformula}  resembles a similar disintegration obtained by $L^1$ optimal transport (see \cite{cavmon} and in particular \cite{cavallettimilman}). However, since we are interested in sequences of smooth spaces,  we can use classical tools of geometric analysis that are sufficient for this setup.  Finaly, starting from the localized version of the $CD(\kappa,N)$ condition we put together the previous steps to obtain the desired displacement-convexity-type inequality (Section \ref{section:displacement}).
\subsection{Applications}
Here we present some immediate consequences that derive from the main theorems and its corollaries.  First, we can prove a Bonnet-Myers diameter type bound that improves and generalizes a result by Aubry \cite{aubry}.
\begin{corollary}
Let $\{(M_i,o_i)\}_{i\in\mathbb{N}}$ be a sequence of smooth, normalized pmm spaces that satisfy the condition $CD(\kappa_i,N)$ for $\kappa_i\in C(X_i)$ 
such that
$$
k_{[M_i,o_i]}(\kappa_i, p,K,R)\rightarrow 0 \mbox{ as } i\rightarrow \infty \ \ \forall R>0
$$
with $K>0$ and $p>\textstyle{\frac{N}{2}}$. For every $\epsilon>0$ there exists $i_\epsilon\in \N$ such that $M_i$ is compact for every $i\geq i_\epsilon$ and $\diam M_i\leq \pi_{K/(N-1)}+\epsilon$.
\end{corollary}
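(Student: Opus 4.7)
The plan is to combine Theorem \ref{th:mainmain} with the classical Sturm Bonnet-Myers theorem for $CD(K,N)$ spaces and then to transfer the resulting diameter bound back to the approximants by a short contradiction argument.

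First I would apply Theorem \ref{th:mainmain}: every subsequence of $\{[M_i,o_i]\}$ admits a further subsequence converging in pointed measured Gromov sense to the isomorphism class $\pmmscinf$ of a pointed, normalized metric measure space $\pmmsinf$ satisfying $CD(K,N)$. Since $K>0$, Sturm's Bonnet-Myers theorem for $CD(K,N)$ yields
\begin{equation*}
\diam\supp\mxin \le \pi\sqrt{(N-1)/K} = \pi_{K/(N-1)} =: D_K.
\end{equation*}
Now suppose the conclusion fails: along a subsequence (not relabeled) either $M_i$ is non-compact or $\diam M_i>D_K+\epsilon$. Since each $M_i$ is a complete smooth Riemannian manifold, Hopf-Rinow furnishes a minimizing geodesic from $o_i$ that can be continued out to distance $D_K+\tfrac{\epsilon}{2}$, providing a point $p_i\in M_i$ with $\de_{\sM_i}(o_i,p_i)=D_K+\tfrac{\epsilon}{2}$. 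After extracting a further subsequence we have $[M_i,o_i]\to\pmmscinf$ in the pmG sense, realised by isometric embeddings $\iota_i\colon M_i\hookrightarrow Z$ and $\iota_\infty\colon X_\infty\hookrightarrow Z$ into a common proper pointed space $(Z,o)$ with $\iota_i(o_i)=\iota_\infty(o_\infty)=o$; since $\de_{\sZ}(o,\iota_i(p_i))=D_K+\tfrac{\epsilon}{2}$ is uniformly bounded, yet another subsequence gives $\iota_i(p_i)\to p_\infty\in Z$ with $\de_{\sZ}(o,p_\infty)=D_K+\tfrac{\epsilon}{2}$.

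Reaching a contradiction now amounts to showing $p_\infty\in\iota_\infty(\supp\mxin)$, for then the above Bonnet-Myers bound on $X_\infty$ forces $\dxin(o_\infty,\iota_\infty^{-1}(p_\infty))\le D_K$, contradicting $\de_{\sZ}(o,p_\infty)=D_K+\tfrac{\epsilon}{2}$. This support membership is the main obstacle, and I would establish it via a uniform local volume lower bound $\m_{\sM_i}(B_\eta(p_i))\ge c(\eta)>0$ for some fixed small $\eta>0$ and all sufficiently large $i$. Such a bound should follow from an almost-Bishop-Gromov volume comparison under $CD(\kappa_i,N)$ with vanishing integral curvature excess: the radial disintegration along geodesics emanating from $o_i$ of Proposition \ref{prop:areaformula}, combined with the one-dimensional estimates for the distortion coefficients $\tau_{\kappa,N}^{(t)}(\theta)$ proven earlier in the paper, compare $\m_{\sM_i}(B_\eta(p_i))$ with the corresponding $K$-model value up to an error controlled by $k_{[M_i,o_i]}(\kappa_i,p,K,R)$ for $R:=D_K+\epsilon$, which vanishes by hypothesis. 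With this uniform bound, weak convergence of $(\iota_i)_\sharp\m_{\sM_i}$ on a small ball around $p_\infty$ yields $\mxin(\bar B_\eta(p_\infty))\ge c(\eta)>0$, hence $p_\infty\in\supp\mxin$. Once the diameter bound $\diam M_i\le D_K+\epsilon$ is established for $i\ge i_\epsilon$, the complete smooth Riemannian manifold $M_i$ is bounded and therefore compact by Hopf-Rinow, completing the argument.
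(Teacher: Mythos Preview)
Your overall strategy---contradiction, pass to a $CD(K,N)$ limit via Theorem~\ref{th:mainmain}, invoke Bonnet--Myers, and transfer back---is the same as the paper's one-line proof, and the care you take with the support issue is reasonable. There are, however, two gaps worth fixing.

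First, your construction of $p_i$ with $\de_{\sM_i}(o_i,p_i)=D_K+\tfrac{\epsilon}{2}$ fails in the case where $M_i$ is compact, $\diam M_i>D_K+\epsilon$, yet $\sup_x \de_{\sM_i}(o_i,x)<D_K+\tfrac{\epsilon}{2}$. Hopf--Rinow extends geodesics, but not as \emph{minimizers}; the triangle inequality only guarantees a point at distance $>(D_K+\epsilon)/2$ from $o_i$, which need not exceed $D_K$. The clean fix is a two-step argument: first run your contradiction with $p_i$ at distance (say) $D_K+\tfrac{\epsilon}{4}$ from $o_i$ whenever such a point exists, concluding that eventually $M_i\subset \bar B_{D_K+\epsilon/4}(o_i)$ (hence $M_i$ is compact); then, with this radius bound in hand, the two endpoints $x_i,y_i$ realising $\de_{\sM_i}(x_i,y_i)>D_K+\epsilon$ lie in a fixed ball around $o_i$, so both pass to the limit and yield $\de_{\sX_\sinfty}(x_\infty,y_\infty)\ge D_K+\epsilon$, the desired contradiction.

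Second, the reference you cite for the volume lower bound is wrong: Proposition~\ref{prop:areaformula} is the area formula for the transport map, not a Bishop--Gromov comparison. The relevant tool is Corollary~\ref{bishopgromov} (applied with the threshold $0$ in place of $K>0$, since $(\kappa_i)_-\le(\kappa_i-K)_-$), which gives the uniform doubling needed for your ball-mass argument. In fact you can bypass the volume step entirely: Corollary~\ref{pointedprecompactness} shows the sequence is uniformly totally bounded, so pmG convergence upgrades to pmGH convergence, under which closed balls Gromov--Hausdorff converge and limit points of $p_i$ (or $x_i,y_i$) automatically land in $\supp\mxin$. The paper exploits exactly this shortcut in the parallel corollary inside Section~7, using the preliminary $MCP(K,N)$ limit and Ohta's Bonnet--Myers rather than the full $CD$ theorem.
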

\noindent 
The proof is straightforward by arguing by contradiction.

A Bonnet-Myers type estimate for a Riemannian manifold satisfying a Kato condition was obtained in \cite{rosemyers, carronrose}.

As a consequence from the previous corollary we also obtain the following statement on finiteness of the  fundamental group.
\begin{corollary}
There exists a function $f:(0,\infty)\rightarrow (0,\infty)$ such that  the following holds.
If $(M,o)$ is  a  smooth metric measure space that satisfies the condition $CD(\kappa,N)$ for $\kappa\in C(M)$ 
and
$$
k_{[M,o]}(\kappa, p, R,K)\leq f(R) \mbox{ for all }R>0
$$
with $K>0$ and $p>\textstyle{\frac{N}{2}}$, then $M$ has finite fundamental group.
\end{corollary}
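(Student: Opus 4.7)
The plan is to argue by contradiction and invoke the preceding Bonnet--Myers-type corollary twice: once on $M$ and once on its Riemannian universal cover.

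Suppose no such function $f$ existed. Then for every $n\in\N$ one could produce a smooth pointed metric measure space $(M_n,o_n)$ satisfying $CD(\kappa_n,N)$ with $k_{[M_n,o_n]}(\kappa_n,p,K,R)\leq 1/n$ for every $R>0$, yet with $\pi_1(M_n)$ infinite. The previous Bonnet--Myers corollary, applied to the sequence $\{(M_n,o_n)\}$, gives for all sufficiently large $n$ that $M_n$ is compact with $\diam M_n\leq \pi_{K/(N-1)}+\epsilon_n$, $\epsilon_n\to 0$. Let $(\tilde M_n,\tilde o_n)$ denote the Riemannian universal cover equipped with the pulled-back smooth measure $\tilde\m_n$, where $\tilde o_n$ is any lift of $o_n$. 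Since the $N$-Bakry--Emery Ricci tensor is a local invariant, $(\tilde M_n,\tilde o_n)$ again satisfies a condition $CD(\tilde\kappa_n,N)$ with $\tilde\kappa_n=\kappa_n\circ\pi_n$. Moreover, since $\pi_1(M_n)$ acts freely, properly discontinuously and cocompactly on $\tilde M_n$ with quotient $M_n$, infinitude of $\pi_1(M_n)$ combined with compactness of $M_n$ forces $\tilde M_n$ to be non-compact.

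The idea is now to re-apply the Bonnet--Myers corollary to the lifted sequence $(\tilde M_n,\tilde o_n)$ and conclude that $\tilde M_n$ is compact for all large $n$, which would contradict the non-compactness above. The main obstacle is to verify its hypothesis, namely that for every fixed $R>0$,
$$
k_{[\tilde M_n,\tilde o_n]}(\tilde\kappa_n,p,K,R)\longrightarrow 0\qquad\text{as }n\to\infty.
$$
A change of variables along the local isometry $\pi_n$ yields
$$
\int_{B_R(\tilde o_n)}(\tilde\kappa_n-K)_-^p\,d\tilde\m_n \leq \mathcal N_n(R)\int_{B_R(o_n)}(\kappa_n-K)_-^p\,d\m_n,
$$
where $\mathcal N_n(R)$ is the maximal sheet multiplicity of $\pi_n$ on $B_R(\tilde o_n)$. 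Although $\mathcal N_n(R)$ may a priori blow up with $n$ once the systole of $M_n$ degenerates, it can be controlled by comparing disjoint small balls around the points of the orbit $\pi_1(M_n)\cdot\tilde o_n$ in $B_R(\tilde o_n)$ with $\tilde\m_n(B_{R+D}(\tilde o_n))$, which in turn is dominated by the model-space volume $V_{N,K/(N-1)}(R+D)$ plus a small error via an integrated Bishop--Gromov comparison of Petersen--Wei type. In parallel, the normalization $\m_n(B_1(o_n))=1$ together with the local isometry of $\pi_n$ on a small ball and a further integrated Bishop--Gromov inequality supplies a lower bound on $\tilde\m_n(B_1(\tilde o_n))$. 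Combining these two ingredients with the hypothesis $k_{[M_n,o_n]}(\kappa_n,p,K,R)\leq 1/n$, which can additionally be evaluated at an auxiliary radius $R'=R'(R,N,K)>R$, produces the required vanishing of the lifted excess.

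With this in hand, the previous Bonnet--Myers corollary applied to $\{(\tilde M_n,\tilde o_n)\}$ gives that $\tilde M_n$ is compact for all $n$ sufficiently large, contradicting the non-compactness established earlier. Hence a function $f$ with the stated properties must exist.
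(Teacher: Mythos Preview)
Your overall strategy is identical to the paper's: argue by contradiction, apply the Bonnet--Myers corollary to get compactness of $M_n$, pass to the universal cover $(\tilde M_n,\tilde o_n)$, and reapply Bonnet--Myers on $\tilde M_n$. The paper simply asserts the key step $k_{[\tilde M_n,\tilde o_n]}(\tilde\kappa_n,p,K,R)\to 0$ without argument, whereas you sketch a justification via sheet multiplicity and integrated Bishop--Gromov comparison.

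That sketch, however, has a circularity: controlling $\mathcal N_n(R)$ through a Petersen--Wei type volume comparison on $\tilde M_n$ already presupposes a bound on the integral curvature excess of $\tilde M_n$, which is exactly what you are trying to establish. Fortunately the step is in fact trivial under your chosen contradiction hypothesis $k_{[M_n,o_n]}(\kappa_n,p,K,R)\le 1/n$ for \emph{all} $R>0$: once $M_n$ is compact with $\diam M_n\le D$, for $R\ge D$ one has
\[
k_{[M_n,o_n]}(\kappa_n,p,K,R)=R^{2}\,\|(\kappa_n-K)_-\|_{L^p(\m_{M_n})},
\]
and requiring this to stay below $1/n$ as $R\to\infty$ forces $(\kappa_n-K)_-\equiv 0$, i.e.\ $\kappa_n\ge K$ everywhere on $M_n$. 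Then $\tilde\kappa_n=\kappa_n\circ\pi_n\ge K$ on $\tilde M_n$ and the lifted excess vanishes identically, so the second application of Bonnet--Myers goes through with no multiplicity estimate needed. The paper's bare assertion is likewise justified by this same observation.
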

\begin{proof}Assume the statment fails. Then, there exists a sequence of smooth pointed metric measure spaces $(M_i,o_i)$ satisfying $CD(\kappa_i,N)$ such that $k_{[M_i]}(\kappa_i,p, R,K)\rightarrow 0$ for $i\rightarrow \infty$ and $\forall R>0$ and such that the fundamental group is not finite. 

First, by the previous Corollary   we can assume that $M_i$ is compact $\forall i\in \N$ and we can normalize the metric measure space $M_i$. Then we still have $k_{[M_i]}(\kappa_i,p,K)\rightarrow 0$.
Let $\tilde M_i$ be the Riemannian  universal cover of $M_i$ equipped with the pull back measure $\m_{\tilde M_i}$ under the convering map $p_i:\tilde M_i\rightarrow M_i$.
For every $i\in \N$ we choose a base point $o_i\in \tilde M_i$. Since the fundamental group is not finite, it follows $\diam \tilde M_i =\infty$.
Moreover it follows that the metric measure space $\tilde M_i$ satisfies the condition $CD(\kappa_i\circ p_i, N)$ and 
$$
k_{[\tilde M_i,o_i]}(\kappa_i \circ p_i, p,K,R)\rightarrow 0 \mbox{ as } i\rightarrow \infty \ \ \forall R>0.
$$
By the previous corollary there exists $i_{\epsilon}\in \N$ such that $\diam_{\tilde M_i}\leq \pi_{K/(N-1)}+\epsilon$ for $i\geq i_{\epsilon}$. 
Hence, $\tilde M_i$ is compact. That is a contradiction.
\end{proof}
A theorem on finiteness of the fundamental group of Riemannian manifold under an integral curvature condition appears in \cite{aubry}  and under a Kato conditon in \cite{carronrose, rosemyers}. Let us point out that our theorem  improves Aubry's result even in the Riemannian case since we do not require a priori that $(\kappa-K)_-$ is $L^p(\vol_M)$ integrable for $p>\frac{n}{2}$. A result for weighted graphs satisfying the Kato conditon appears in \cite{mr}.

\subsubsection{Almost rigidity results}
Recall the following definitions.
For $K>0$ and $N> 1$ the $1$-dimensional model space is $$I_{K,N}=\left(\left[0,{\pi_{K/(N-1)}}\right], 1_{\left[0,\scriptstyle{\pi_{K/(N-1)}}\right]}\sin_{K/(N-1)}^{N-1}\mathcal L^1\right)$$
where $[0,\scriptstyle{\pi_{K/(N-1)}}]$ is equipped with the restriction of the standard metric $|\cdot|$ on $\R$. The metric measure space
$I_{K,N}$ satisfies $CD(K,N)$ \cite[Example 1.8]{stugeo2}. 

Let $(M,g,\m)=M$ be a weighted Riemannian manifold with $\m={\Phi}\vol_g$ and $\Phi\in C^{\infty}(M\backslash \partial M)$. The warped product
$I_{K,N}\times_f^{N-1}M$ between $I_{K,N}$ and $M$ w.r.t. $f:I_{K,N}\rightarrow [0,\infty)$ is defined as the metric completion of the weighted Riemannian manifold
$\left(I_{K,N}\times M, h, \m_C\right)$
where $h=\langle\cdot,\cdot\rangle^2+ f^2 g$ and $\m_C=f^{N-1} \mathcal L^1|_{I_{K,N}} \otimes \m$. In \cite{ketterer} it was proved that if the warping function $f$ satisfies 
\begin{align*}
f''+ \frac{K}{N-1}f\leq 0\ \mbox{ and }\  (f')^2+\frac{K}{N-1}f^2\leq L \mbox{ on } I_{K,N}
\end{align*}
and $(M,d_g,\m)$ satisfies $CD(L(N-2),N-1)$ then $I_{K,N}\times^{N-1}_{f} M$ satisfies $CD(K,N)$. This applies in particular when $f=\sin_{K/(N-1)}$ and $L=1$. Then the corresponding warped product is a spherical suspension. For instance, we can choose $M=I_{N-2,N-1}$. If $n\in \N$ we can choose 
$M=\mathbb{S}^{n-1}_{1}$ and we get that
$
I_{K,n}\times^{n-1}_{\sin_{K/(n-1)}}\mathbb S^{n-1}_1=\mathbb{S}_{K/(n-1)}^n.
$

More generally, one can define warped products in the context of metric measure spaces. In \cite{ketterer2} it was proved that 
$
I_{K,N-1}\times_{\sin_{{K}/{N-1}}}^{N-1} Y
$ 
satisfies the condition $RCD(K,N)$ if and only if $Y$ satisfies the condition $RCD(N-2,N-1)$.

Rigidity statements for $RCD$ spaces together with Theorem \ref{th:noncol} yield the following almost rigidity statements for smooth metric measure spaces with integral curvature bounds.
\begin{corollary}
For every $\epsilon>0$ there exists $\delta>0$ such that the following holds. If $M^n$ is a compact  Riemannian manifold that satisfies \eqref{noncollapsing},  $k_{[M]}(p,n-1)<\delta$ for $p>\frac{n}{2}$ and $\diam M\geq \pi-\delta$, then there exists an $RCD(n-2,n-1)$ space $Y$ such that 
\begin{align*}
\mathbb{D}( [M,d_{\sM}, \vol_{\sM}], [0,\pi]\times^n_{\sin}Y)\leq \epsilon.
\end{align*}
\end{corollary}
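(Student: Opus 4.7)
The plan is to argue by contradiction. Suppose the conclusion fails for some $\epsilon_0>0$: then there is a sequence of compact $n$-dimensional Riemannian manifolds $M_i$ satisfying \eqref{noncollapsing} with $k_{[M_i]}(p,n-1)\to 0$ and $\diam M_i\geq \pi-1/i$, yet
\[
\mathbb{D}\bigl([M_i,d_{M_i},\vol_{M_i}],\,[0,\pi]\times^n_{\sin}Y\bigr)>\epsilon_0
\]
for every $RCD(n-2,n-1)$ space $Y$.

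First, I would pick base points $o_i\in M_i$ and renormalize the measures so that $(M_i,o_i)$ is normalized. The normalization constants are uniformly bounded away from $0$ and from $\infty$ thanks to \eqref{noncollapsing} together with the integral-curvature Bonnet-Myers corollary above (which forces the ambient diameter to stay close to $\pi$, so in particular $\vol_{M_i}(M_i)$ is controlled from above). Then Theorem \ref{th:noncol}, applied with $K=n-1$ and $N=n$, extracts a subsequence that converges in pointed measured Gromov sense to some pointed metric measure space $(X,o)$ satisfying $RCD(n-1,n)$.

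Combining the lower bound $\diam M_i\geq \pi-1/i$ with the Bonnet-Myers corollary above, which gives $\diam M_i\leq \pi_{K/(N-1)}+1/i=\pi+1/i$ once $i$ is large, yields $\diam M_i\to\pi$. The uniform diameter bound together with noncollapsing upgrades the pointed measured Gromov convergence to pointed measured Gromov-Hausdorff convergence, so the diameter passes to the limit and $\diam X=\pi$. I then invoke the maximal diameter rigidity theorem for $RCD(N-1,N)$ spaces recalled just above the corollary (the warped-product characterization proved in \cite{ketterer2}): an $RCD(n-1,n)$ space of diameter $\pi$ is isomorphic as a metric measure space to a spherical suspension $[0,\pi]\times^n_{\sin}Y_\ast$ for some $RCD(n-2,n-1)$ space $Y_\ast$. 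Testing the standing assumption against this particular $Y_\ast$ gives $\mathbb{D}([M_i],[0,\pi]\times^n_{\sin}Y_\ast)\to 0$ along the subsequence, a contradiction.

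The main obstacle I expect is the bookkeeping of normalizations: the spherical suspension carries the measure $\sin^{n-1}\mathcal L^1\otimes \m_{Y_\ast}$, whose total mass is tied to that of $\m_{Y_\ast}$, so one must check that the limit measure produced by Theorem \ref{th:noncol} actually factors in this way with exactly the right finite measure $\m_{Y_\ast}$ on the cross-section. Noncollapsing together with the diameter bound force $\vol_{M_i}(M_i)$ to be uniformly comparable to a positive constant, so a single coherent choice of base points and rescaling constants works for the whole subsequence and the contradiction closes cleanly.
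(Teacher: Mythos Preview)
Your argument is correct and follows exactly the route the paper indicates: the corollary is stated without proof, preceded only by the sentence ``Rigidity statements for $RCD$ spaces together with Theorem \ref{th:noncol} yield the following almost rigidity statements,'' and your contradiction--compactness--rigidity scheme is precisely what this sentence encodes. The concern you raise about normalizations is not a genuine obstacle: since $\mathbb{D}$ is defined on isomorphism classes of normalized mm spaces, and the rigidity theorem from \cite{ketterer2} produces an mm-space isomorphism $X\cong[0,\pi]\times^n_{\sin}Y_\ast$, the cross-section $Y_\ast$ automatically carries the correct measure.
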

\begin{corollary}
For every $\epsilon>0$ there exists $\delta>0$ such that the following holds. If $(M^n,o)$ is a pointed  Riemannian manifold that satisfies \eqref{noncollapsing},  $k_{[M]}(p,0)<\delta$ for $p>\frac{n}{2}$ and
\begin{align*}
\frac{\vol_M(B_{2r}(o))}{\vol_{\R^n}(B_{2r}(0))}\geq  (1-\delta)
\frac{\vol_M(B_r(o))}{\vol_{\R^n}(B_r(0))}
\end{align*}
for some $r>0$, 
then there exists an $RCD(n-2,n-1)$ space $Y$ such that 
\begin{align*}
\mathbb{D}( [M,d_{\sM}, \vol_{\sM}], [0,r]\times^n_{t}Y)\leq \epsilon.
\end{align*}
\end{corollary}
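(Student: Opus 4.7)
The proof proceeds by contradiction along the compactness-and-rigidity pattern used for the diameter corollary, now invoking the \emph{volume cone implies metric cone} rigidity for $RCD(0,n)$ in place of Bonnet--Myers. Suppose the statement fails: there exist $\epsilon>0$, $\delta_i\downarrow 0$, radii $r_i>0$, and pointed $n$-dimensional Riemannian manifolds $(M_i,o_i)$ satisfying \eqref{noncollapsing}, $k_{[M_i]}(p,0)<\delta_i$, and
\[
\frac{\vol_{M_i}(B_{2r_i}(o_i))}{\vol_{\R^n}(B_{2r_i}(0))}\geq (1-\delta_i)\frac{\vol_{M_i}(B_{r_i}(o_i))}{\vol_{\R^n}(B_{r_i}(0))},
\]
yet $\mathbb{D}([M_i,\de_{M_i},\vol_{M_i}],[0,r_i]\times_t^n Y)>\epsilon$ for every $RCD(n-2,n-1)$ space $Y$. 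Since $k_{[M]}(p,0)$ is scale invariant (with $K=0$), the double volume ratio is scale invariant, and the noncollapsing condition is scale equivariant, we rescale $g_{M_i}\to r_i^{-2}g_{M_i}$ and normalize the volume so that $r_i=1$ and the renormalized measures satisfy $\m_i(B_1(o_i))=1$; the normalization constant is controlled from both sides by integral Bishop--Gromov comparison and the noncollapsing lower bound, so the contradiction assumption passes through the rescaling.

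Theorem \ref{th:noncol} is then applied to extract a subsequential pointed measured Gromov limit $(X_\infty,o_\infty)$ satisfying $RCD(0,n)$. The noncollapsing hypothesis prevents degeneration and, together with integral Bishop--Gromov comparison along the sequence, upgrades measured Gromov convergence to continuity of the volume function on balls, so that $\m_i(B_s(o_i))\to\m_{X_\infty}(B_s(o_\infty))$ for every $s>0$. Passing the rescaled volume inequality to the limit and combining with the Bishop--Gromov monotonicity of $s\mapsto\m_{X_\infty}(B_s(o_\infty))/s^n$ that holds on every $RCD(0,n)$ space yields the equality
\[
\frac{\m_{X_\infty}(B_2(o_\infty))}{\vol_{\R^n}(B_2(0))}=\frac{\m_{X_\infty}(B_1(o_\infty))}{\vol_{\R^n}(B_1(0))},
\]
hence constancy of the Euclidean volume ratio on $[1,2]$.

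Invoking the volume-cone-implies-metric-cone theorem of De~Philippis--Gigli for $RCD(0,n)$ spaces together with the warped-product cone characterization from \cite{ketterer2}, this constancy forces $B_2(o_\infty)$ to be isomorphic as a metric measure space to a truncated Euclidean cone $[0,2]\times_t^n Y$ for some $RCD(n-2,n-1)$ space $Y$; restricting to scale $1$ produces the cone $[0,1]\times_t^n Y$, contradicting the choice of the sequence once $i$ is sufficiently large. The main technical obstacle is the measure-convergence step that transports the volume-ratio identity cleanly to the $RCD$ limit; this is precisely where the noncollapsing hypothesis \eqref{noncollapsing} enters essentially, since without it the limit measure could concentrate on lower-dimensional strata and the exact volume-cone condition would fail in the limit.
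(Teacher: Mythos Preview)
The paper does not supply a proof of this corollary; it merely records it as a consequence of ``rigidity statements for $RCD$ spaces together with Theorem~\ref{th:noncol}'' and remarks that a version under weaker hypotheses appears already in \cite{tianzhang}. Your contradiction argument---pass to an $RCD(0,n)$ limit via Theorem~\ref{th:noncol}, then invoke the De~Philippis--Gigli volume-cone-implies-metric-cone theorem together with the cross-section characterization from \cite{ketterer2}---is exactly the route the paper has in mind, so the approaches agree.

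One point you pass over too quickly: $\mathbb{D}$ scales linearly with the metric, so after rescaling by $r_i^{-1}$ the contradiction hypothesis becomes $\mathbb{D}\bigl([r_i^{-1}M_i],[0,1]\times_t^n Y\bigr)>\epsilon/r_i$, not $>\epsilon$. If the $r_i$ are unbounded this lower bound degenerates and convergence of the rescaled sequence to a cone no longer yields a contradiction. The statement is evidently meant with some implicit normalization (or with $B_r(o)$ in place of the full manifold, as in the classical Cheeger--Colding formulation); once that is made explicit, your argument goes through. You should also be explicit that the hypothesis $k_{[M_i]}(p,0)\to 0$ in the compact normalization indeed implies the pointed hypothesis $k_{[M_i,o_i]}(p,0,R)\to 0$ for all $R$ required by Theorem~\ref{th:noncol}, since the two quantities use different normalizations of the reference measure.
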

The second  corollary appears with weaker assumptions also in \cite{tianzhang}. But our result yields in addition that the cross section is an $RCD$ space.

Instead of the noncollapsing condition one can also add an upper curvature bound. This will also force the limit to become $RCD$ by \cite{cdmeetscat, kkweak, kkk}
\begin{corollary}
Let $\{(M_i,o_i)\}_{i\in\mathbb{N}}$ be a sequence of $n$-dimensional, pointed Riemannian manifolds that satisfy the condition $CD(\kappa_i,N)$ for $\kappa_i\in C(X_i)$ 
such that 
$$
k_{[M_i,o_i]}(p,K,R)\rightarrow 0 \mbox{ when  } i\rightarrow \infty \ \ \forall R>0
$$
with $K\in \R$,  $p>\textstyle{\frac{N}{2}}$, $p\geq 1$ if $N=2$ and $M_i$ satisfies a $CAT(\bar{K})$ condition with  $\bar K\in \R$.
Then $\{[M_i,o_i]\}_{i\in\mathbb{N}}$ subconverges in pmG sense to 
the isomorphism class of a pmm space $\pmms$ satisfying the mixed curvature condition $RCD(K,N)+CAT(\bar K)$.
\end{corollary}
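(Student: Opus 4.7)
The plan is to combine Theorem \ref{th:noncol}'s predecessor, Theorem \ref{th:mainmain}, with two separate classical stability arguments (one for $CD$, one for $CAT$), and then close the loop with the structural results of \cite{cdmeetscat, kkweak, kkk} that upgrade $CD+CAT$ to $RCD+CAT$.

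First, I would apply Theorem \ref{th:mainmain} verbatim. The hypotheses on $k_{[M_i,o_i]}(\kappa_i,p,K,R)$ are exactly those needed, and the normalization in that theorem can be arranged by a trivial rescaling of the reference measure (the $CAT(\bar K)$ hypothesis is invariant under such rescaling). This yields a subsequence and a pointed normalized metric measure space $\pmms$ which is a pmG limit and which satisfies $CD(K,N)$.

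Next, I would promote pmG convergence to pointed Gromov--Hausdorff convergence of the supports. Since $M_i$ is a connected Riemannian manifold, $\supp\m_{\sM_i}=M_i$; together with the $CD(\kappa_i,N)$ bound, the measures $\m_{\sM_i}$ satisfy a uniform local doubling-type control coming from the $N$-dimensional Bishop--Gromov inequality (with a small error controlled by the integral excess, as in Section~5 of the paper). This is exactly the condition under which pmG convergence of $[M_i,o_i]$ forces pGH convergence of $(M_i,o_i)$, which is the standard framework for propagating synthetic sectional upper bounds. Then I would invoke the well-known stability of the $CAT(\bar K)$ condition under pGH convergence of (geodesic) metric spaces. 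Hence $X$ satisfies $CAT(\bar K)$.

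Finally, $X$ satisfies both $CD(K,N)$ and $CAT(\bar K)$, and the main result of \cite{cdmeetscat} (with refinements in \cite{kkweak, kkk}) says that a $CAT$ space with a $CD$ bound is automatically infinitesimally Hilbertian, hence $RCD(K,N)$. The mixed condition $RCD(K,N)+CAT(\bar K)$ is therefore satisfied by the limit. The main obstacle in this argument is step two: the stability of $CAT(\bar K)$ really requires pGH (not merely pmG) convergence, so one has to verify that, in the non-collapsed/normalized regime the paper is working in, no ``metric support'' is lost in the limit. Once that bookkeeping is handled (in the smooth Riemannian setting with uniform doubling, it is routine), the rest of the argument is a direct quotation of the cited theorems.
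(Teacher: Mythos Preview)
Your proposal is correct and matches the paper's own (implicit) argument: the paper does not write out a proof for this corollary, but the sentence immediately preceding it makes the intended route clear---apply Theorem~\ref{th:mainmain} to get $CD(K,N)$ in the limit, pass the $CAT(\bar K)$ bound through Gromov--Hausdorff convergence, and then invoke \cite{cdmeetscat, kkweak, kkk} to upgrade $CD+CAT$ to $RCD+CAT$. Your write-up is in fact more detailed than the paper on the bookkeeping step (promoting pmG to pGH via uniform doubling); the relevant Bishop--Gromov-type estimate is in Section~3 (Corollary~\ref{bishopgromov} and Corollary~\ref{pointedprecompactness}), not Section~5.
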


\subsection{Plan of the paper. }
In section 2 we recall preliminaries on convergence of metric measure spaces and various notions of convergence together with some general results. We  introduce the curvature-dimension condition $CD(\kappa, N)$ for general metric measure spaces and $\kappa\in C(X)$ and give a self-contained proof that this condition is equivalent to $\ric_M\geq \kappa$ for Riemannian manifolds. We also introduce the Riemannian curvature-dimension condition $RCD(K,N)$ for $K\in \R$. 

In section 3 we prove that uniform bounds on the integral curvature quanitity $k_{[X]}(\kappa,p,K)$ for $CD(\kappa,N)$ spaces with $p>\frac{N}{2}$ yields precompactness under measured Gromov-Hausdorff and measured Gromov convergence. A similar statement holds for the pointed case.

In section 4 we derive estimates for the case of $1$-dimensional metric measure spaces. 

In section 5 we present some technical obeservations that derive from the Area and the Co-Area formula. 

In section 6 we use the $1$-dimensional estimates and the technical lemma from the previous section to derive a displacement convexity inequality for smooth metric measure spaces with integral curvature bounds. 

In section 7 we prove the main theorem where we consider the cases $K\leq 0$ and $K>0$ separately. We finisch with a list of straightforward applications.

 \subsection{Acknowledgments} 
 I want to thank Robert Haslhofer for  useful comments and remarks on an earlier version of this paper. 
\section{Preliminaries}
\subsection{Metric measure spaces}  
\noindent
We follow \cite{gmsstability}.
Let $(X,\de_{\sX})$ be a complete and separable metric space. We denote by $\mathcal{M}_{loc}(X)$ the collection of Borel measures on $X$ which are finite on bounded sets,
by $\mathcal{M}(X)$ the subset of finite Borel measures, and by $\mathcal{P}(X)$ the collection of Borel probability measures.
We say a sequence $(\mu_i)_{i\in\mathbb{N}}$ of measures in $\mathcal{M}_{loc}(X)$ converges weakly to $\mu_{\infty}\in\mathcal{M}_{loc}(X)$ if 
\begin{align}\label{u01}
\lim_{i\rightarrow \infty}\int fd\mu_i=\int fd\mu_{\infty} \ \ \mbox{ for every }f\in C_{bs}(X)
\end{align}
where $C_{bs}(X)$ is the set of bounded continuous functions with bounded support.  If $(\mu_i)_{i\in \bar{\mathbb N}}\subset\mathcal{P}(X)$, then this is equivalent to require (\ref{u01}) with $f\in C_b(X)$, the set of bounded continuous functions.
\smallskip

Let $\m_{\sX}\in \mathcal{M}_{loc}(X)$. 
We call the triple $(X,\de_{\sX},\m_{\sX})$ a metric measure space (mm space).
The case $\m_{\sX}(X)=0$ is excluded. 
\smallskip

If $A\subset X$ is measurable with $\m_{\sX}(A)<\infty$, we set $\m_A:= \m_{\sX}|_A$ and $\bar{\m}_A={\m_{\sX}(A)}^{-1}\m_A$. 
If $\m_{\sX}(X)=1$, we say the mm space $X$ is normalized.
If we fix a point $o\in \supp\m_{\sX}$, we call $(X,o)$ a pointed metric measure space (pmm space).
\smallskip

Two mm spaces $X_i$, ${i=0,1}$, are called isomorphic if there exists an isometric embedding $\iota:\supp\m_{\sX_0}\rightarrow X_1$ such that
$\iota_{\star}\m_{\sX_0}=\m_{\sX_1}$. For pmm spaces $(X_i,o_i)$, ${i=0,1}$ we further require $\iota(o_0)=o_1$.
We shall denote by $\mmsc$ the corresponding isomorphism class of an mm space $X$, and with $\pmmsc$ the isomorphism class of a pmm space $(X,o)$.
\smallskip

The isomorphism class $[X]$  is invariant under $\m_{\sX}\mapsto r\cdot \m_{\sX}$, $r\in \mathbb R$. 
Hence, as  a represenative of $[X]$ we will usually pick one with $\m_X(X)=1$. In this case we say $X$ is normalized. Similar for the isomorphism class of $[X,o]$ we pick $(X,o)$ such that $\m_X(B_1(o))=1$. In this case we call $(X,o)$ normalized.
\subsection{Convergence of metric measure spaces}
Set $\bar{\mathbb{N}}=\mathbb{N}\cup\left\{\infty\right\}$.
We collect some  results on convergence of mm spaces that will be needed. 
\begin{definition}\label{mGHconvergence}
A sequence $(X_i,\de_{\sX_i})_{i\in\mathbb{N}}$ of compact metric spaces converges in Gromov-Hausdorff (GH) sense to a compact metric space $(X_{\sinfty},\de_{\sX_{\sinfty}})$ 
if 
there is a compact metric space $(Z,\de_{\sZ})$ and isometric embeddings $\iota_i:X_i\rightarrow Z$, $\iota:X\rightarrow Z$ such that $\iota_i(X_i)$ converges in Hausdorff sense to 
$\iota(X)$.
\smallskip

A sequence of compact mm spaces
$(X_i)_{i\in\mathbb{N}}$ with finite $\m_{\sX_i}$ converges in measured Gromov-Hausdorff (mGH) sense to a compact mm space \linebreak[4] $X_{\sinfty}$ 
if there exists a compact metric space $(Z,\de_{\sZ})$ and distance preserving embeddings $\iota_i,\iota: X_i, X\rightarrow Z$ as before such that the corresponding metric spaces converge in Gromov-Hausdorff sense and
$
(\iota_i)_{\star}\m_{\sX_i}\rightarrow (\iota)_{\star}\m_{\sX}
$
weakly in $\mathcal{M}(Z)$.
\end{definition}

\begin{definition}\label{pmGH}
We say pointed mm spaces $(X_i,o_i)$, $i\in \N$, converge in pointed measured Gromov-Hausdorff (pmGH) sense 
to a pointed mms space $(X_{\infty},o_{\infty})$ if for every $R>0$ and every $\epsilon>0$ there exists $i_{\sR,\epsilon}$ such that for $i\geq i_{\epsilon,\sR}$ there are measurable maps $f^{\sR,\epsilon}_i:X_i\rightarrow X_{\infty}$ such 
that
\begin{itemize}
\smallskip
 \item[(i)] $f^{\sR,\epsilon}_{i}(o_i)=o_{\infty}$,
\smallskip
 \item[(ii)] $\sup_{x,y\in B_{R}(o_i)}|\de_{\sX_i}(x,y)-\de_{\sX_{\infty}}(f^{\sR,\epsilon}_i(x),f^{\sR,\epsilon}_i(y))|<\epsilon$,
\smallskip
 \item[(iii)] $B_{\sR-\epsilon}(o_{\infty})\subset B_{\epsilon}(f^{\sR,\epsilon}_i(B_{R}(o_i)))$,
\smallskip
 \item[(iv)] $(f^{\sR,\epsilon}_i)_{\star}{\m}_{B_{R}(o_i)}$ converges weakly to ${\m}_{B_{R}(o_{\infty})}$ as $i\rightarrow \infty$.
\smallskip
\end{itemize}
\end{definition}
Let $(X, d_{\sX})$ be a metric space and $\epsilon>0$.  A subset $S\subset X$ is called  an $\epsilon$-net of $A\subset X$  if $A\subset \bigcup_{x\in S}B_{\epsilon}(x)$.

A family of metric spaces $\mathcal X$ is called uniformily totally bounded if  the following two statements hold. There exists $D$ such that for all $X\in \mathcal X$ $\diam_{X}\leq D$. For every $\epsilon>0$ there exists $N(\epsilon)\in \mathbb{N}$ such that every $X\in\mathcal X$ contains an $\epsilon$-net of not more than $N(\epsilon)$ points.

A family of pointed metric spaces $\mathcal X_0$ is called uniformily totally bounded if  for every $R>0$ and for every $\epsilon>0$ there exists $N(R,\epsilon)\in \mathbb N$ such that  the ball $B_R(o)$ admits an $\epsilon$-net of not more than $N(R,\epsilon)$ points for all $(X,d_{\sX}, o)\in \mathcal X_0$.
\begin{theorem}\label{th:compactness}
A sequence of  mm spaces $(X_i)_{i\in \mathbb N}$ such that the corresponding family of metric spaces is uniformily totally bounded and $\sup_{i\in \N}\m_{X_i}(X_i)\leq C<\infty$ admits a subsequence that converges in mGH sense to a  mm space $X_\sinfty$. 

A sequence of  pmm spaces $(X_i,o_i)_{i\in \mathbb N}$ such that the corresponding family of pointed metric spaces is uniformily totally bounded and $$\sup_{i\in \mathbb N}\m_{X_i}(B_R(o_i))\leq C(R)<\infty \ \ \forall R>1, $$  subconverges in  pmGH sense to  pmm space $(X_\sinfty,o_\sinfty)$.
\end{theorem}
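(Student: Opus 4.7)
The plan is to combine Gromov's compactness theorem for (uniformly totally bounded) metric spaces with the weak compactness of mass-bounded Borel measures on compact spaces (Prokhorov's theorem): the Gromov--Hausdorff part of the convergence will be upgraded to mGH convergence by pushing the reference measures into a common ambient space and extracting a weakly convergent subsequence. Concretely, for the first statement Gromov's classical theorem applied to $\{(X_i,\de_{\sX_i})\}$ yields, after passing to a subsequence, a compact metric space $(X_{\sinfty},\de_{\sX_{\sinfty}})$ and isometric embeddings $\iota_i\colon X_i \hookrightarrow Z$, $\iota\colon X_{\sinfty}\hookrightarrow Z$ into a common compact metric space $Z$ with $\iota_i(X_i)\to \iota(X_{\sinfty})$ in Hausdorff distance. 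The pushforwards $\mu_i := (\iota_i)_\star \m_{\sX_i}\in \mathcal{M}(Z)$ satisfy $\mu_i(Z)\leq C$, so Prokhorov gives a further subsequence with $\mu_i\rightharpoonup \mu_{\sinfty}\in \mathcal{M}(Z)$. To see that $\mu_{\sinfty}$ is concentrated on $\iota(X_{\sinfty})$ I test against the bounded continuous function $h(z) := \de_{\sZ}(z,\iota(X_{\sinfty}))$: Hausdorff convergence forces $\supp\mu_i\subset \iota_i(X_i)\subset B_{\delta}(\iota(X_{\sinfty}))$ for $i$ large, hence $\int h\,d\mu_i\leq \delta C$, and weak convergence passes this to $\int h\,d\mu_{\sinfty}\leq \delta C$ for every $\delta>0$. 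Thus $\supp\mu_{\sinfty}\subset\iota(X_{\sinfty})$, and $\m_{\sX_{\sinfty}}:=(\iota^{-1})_\star\mu_{\sinfty}$ realizes $X_{\sinfty}$ as the mGH limit.

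For the pointed statement the same recipe is applied ball by ball. Gromov's pointed compactness theorem first produces a pGH-limit $(X_{\sinfty},o_{\sinfty})$ of a subsequence, realized via isometric embeddings into a common proper metric space $Z$ so that each closed ball $\bar B_R(o_i)\subset Z$ Hausdorff-converges to $\bar B_R(o_{\sinfty})$. For each $R\in\N$ the restricted pushforwards $\mu_i^{R} := (\iota_i)_\star(\m_{\sX_i}|_{B_R(o_i)})$ have mass at most $C(R)$ and are asymptotically concentrated in $\bar B_{R}(o_{\sinfty})$, so Prokhorov again gives a weakly convergent subsequence with limit $\mu_{\sinfty}^R$ supported in $\bar B_R(o_{\sinfty})$ by the same cutoff argument as above. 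A diagonal extraction over $R\in\N$ yields a single subsequence along which weak convergence holds simultaneously for every $R$; the restrictions $\{\mu_{\sinfty}^R\}_R$ are consistent on smaller balls and glue to a locally finite Borel measure $\m_{\sX_{\sinfty}}$ on $X_{\sinfty}$. Translating the resulting convergence back through the embeddings furnishes, for each $R>0$ and $\epsilon>0$, maps $f_i^{R,\epsilon}$ satisfying properties (i)--(iv) of Definition \ref{pmGH}.

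The main technical hurdle lies in the pointed case: one must verify that the measures $\mu_{\sinfty}^R$ obtained at different scales are compatible restrictions of a single limit measure, and that the explicit $\epsilon$-approximation formulation of Definition \ref{pmGH} can be read off from the ambient weak convergence of the $\mu_i^R$ together with the pGH convergence of the underlying pointed metric spaces. Once this bookkeeping is in place, both parts of the theorem reduce to the interplay between Prokhorov's theorem, Gromov's theorem, and the cutoff-function argument sketched above, with no further geometric input beyond the hypotheses.
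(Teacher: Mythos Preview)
The paper does not supply a proof of this theorem; it is stated in the preliminaries as a known compactness result (in the spirit of Gromov's precompactness theorem combined with Prokhorov's theorem), and is later invoked in the proof of Corollary~\ref{pointedprecompactness}. Your argument is the standard one and is correct in outline: Gromov's compactness theorem produces the ambient space and the GH/pGH limit, and Prokhorov's theorem on the compact ambient space (or on closed balls in the pointed case) yields the weak limit of the pushforward measures, which is then shown to be supported on the image of the limit space via the distance-cutoff argument you describe.

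One small point worth tightening in the pointed case: the consistency of the limits $\mu_\sinfty^R$ as $R$ varies is not quite automatic from what you wrote, because $\mu_\sinfty^R$ may give mass to the boundary sphere $\partial B_R(o_\sinfty)$, so $\mu_\sinfty^{R'}|_{B_R(o_\sinfty)}$ need not equal $\mu_\sinfty^R$ for $R'>R$ in general. The usual fix is to restrict to radii $R$ with $\mu_\sinfty^{R'}(\partial B_R(o_\sinfty))=0$ (all but countably many $R$), or equivalently to work directly with weak convergence in $\mathcal M_{loc}(Z)$ of the full pushforwards $(\iota_i)_\star \m_{\sX_i}$, using that they are uniformly bounded on each compact set of $Z$. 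With this adjustment your proof goes through.
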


\begin{definition}\label{mGconvergence}
A sequence of isomorphism classes
$\mmsic_{i\in\N}$ of mm spaces with finite $\m_{\sX_i}$ converges in measured Gromov (mG) sense to the isomorphism class $\mmsinfc$ of an mm space
if there exists a complete and separable metric space $(Z,\de_{\sZ})$ and isometric embeddings $\iota_i:X_i\rightarrow Z$, $\iota:X\rightarrow Z$ such that
$
(\iota_i)_{\star}\m_{\sX_i}\rightarrow (\iota)_{\star}\m_{\sX}$ weakly in $\mathcal{M}(Z)$.

In \cite{stugeo1} Sturm introduces a distance $\mathbb{D}$ on the space of isomorphism classes of normalized mm spaces that metrizes convergence in mG sense:  Consider $[X]$ and $[Y]$ for mm spaces $X$ and $Y$, normalize the measures $\m_{\sX}$ and $\m_{\sY}$ and define
\begin{align*}
\mathbb{D}([X],[Y])=\inf_{i_{\sX},i_{\sY}:X,Y\rightarrow Z}W_Z\left((\iota_{\sX})_{\#}\m_{\sX},(\iota_{\sY})_{\#}\m_{\sY}\right).
\end{align*}
The infimum is w.r.t. all distance preserving embeddings $\iota_{\sX}, \iota_{\sY}$ into a complete and separable metric space $(Z,d_{\sZ})$, and $W_Z$ is the Wasserstein distance in $(Z,d_{\sZ})$ that is introduced in the next subsection.
\end{definition}
\begin{definition}
Let $(X_{i},o_i)_{i\in\N}$ be a sequence of pmm spaces. We say that the corresponding sequences of isomorphism classes converges in pointed 
measured Gromov (pmG) sense to the
isomorphism class of a pmm space $(X_{\sinfty},o_{\sinfty})$ provided there exists a complete and separable metric space $(Z,\de_{\sZ})$ 
and isometric embeddings $\iota_{i}:{X_i}\rightarrow Z$ for $i\in\bar{\mathbb{N}}$ such that $(\iota_i)_{\star}\m_{\sX_i}\rightarrow (\iota_{\sinfty})_{\star}\m_{\sX_{\sinfty}}$ 
weakly in $\mathcal{M}_{loc}(X)$, 
and $\iota_i(o_i)\rightarrow \iota_{\sinfty}(o_{\sinfty})$ in $(Z,\de_{\sZ})$.
\end{definition}
\begin{theorem}[\cite{stugeo1}, \cite{gmsstability}]\label{th:gms}
If a sequence $\mmsi$ of mm spaces converges in mGH sense to a mm space $\mms$, then the corresponding equivalence classes converge in mG sense.

If pmm spaces $\pmmsi$ converge in pmGH sense 
to $\pmmsinf$ then the equivalence classes converge in pmG sense.
\end{theorem}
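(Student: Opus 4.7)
The compact case is essentially a matter of unpacking the definitions. The mGH hypothesis already supplies a compact metric space $(Z,d_Z)$ together with isometric embeddings $\iota_i:X_i\hookrightarrow Z$ and $\iota:X\hookrightarrow Z$ for which $(\iota_i)_\star m_{X_i}\to \iota_\star m_X$ weakly in $\mathcal M(Z)$. Since a compact metric space is automatically complete and separable, and since on such a $Z$ one has $C_{bs}(Z)=C_b(Z)=C(Z)$, the weak convergence required by the mG definition is literally the same one already granted. Hence $(Z,d_Z)$ equipped with the same embeddings witnesses mG convergence of the isomorphism classes, and nothing more is to be shown.

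For the pointed case, the plan is to build a single complete separable ambient metric space out of the approximating maps supplied by pmGH convergence. First I would extract a diagonal sequence: for each $k\in\mathbb N$ pick $i_k$ large enough that a map $f_i^{k}:=f_i^{k,1/k}:X_i\to X_\infty$ satisfying (i)--(iv) of Definition \ref{pmGH} exists for all $i\ge i_k$; after relabelling, each $X_i$ carries a distinguished map $f_i:B_{R_i}(o_i)\to X_\infty$ with $R_i\to\infty$ and error $\epsilon_i\to 0$. Next I would form the disjoint union $\tilde Z=X_\infty\sqcup \bigsqcup_i X_i$ and define the maximal pseudo-metric $d$ on $\tilde Z$ that restricts to the original metric on each piece and satisfies $d(x,f_i(x))\le \epsilon_i$ for every $x\in B_{R_i}(o_i)$. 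Quotienting by the zero-distance relation and taking the metric completion produces $(Z,d_Z)$; it is complete and separable because each $X_i$ and $X_\infty$ is.

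It then remains to verify that the natural maps $\iota_i,\iota$ are isometric embeddings, that $\iota_i(o_i)\to \iota(o_\infty)$, and that $(\iota_i)_\star m_{X_i}\to \iota_\star m_{X_\infty}$ weakly in $\mathcal M_{loc}(Z)$. Isometry of $\iota$ is immediate by construction, while for $\iota_i$ the $\epsilon_i$-isometric property (ii) on $B_{R_i}(o_i)$, combined with $R_i\to\infty$, yields $|d_Z(\iota_i(x),\iota_i(y))-d_{X_i}(x,y)|\le C\epsilon_i$, and taking the \emph{maximal} pseudo-metric in the gluing rules out spurious contractions. Basepoint convergence $d_Z(\iota_i(o_i),\iota(o_\infty))\le \epsilon_i\to 0$ is then condition (i). To test weak convergence in $\mathcal M_{loc}(Z)$ I would take $\varphi\in C_{bs}(Z)$ with support in some ball around $\iota(o_\infty)$, approximate it uniformly by a function pulled back through $f_i$ to a ball of fixed radius inside $X_i$ using (ii)--(iii), and apply (iv) on that ball to pass to the limit, the transfer error being controlled by $\epsilon_i\to 0$.

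The main obstacle is the gluing step: one must check that the maximal pseudo-metric defined via the $f_i$ is consistent, i.e.\ that the induced metric on each $\iota_i(X_i)$ really equals $d_{X_i}$ rather than being strictly smaller, so that $\iota_i$ is a genuine isometric embedding after completion. This is done by a careful triangle-inequality argument using only the $\epsilon_i$-isometric property (ii), which is available on the balls $B_{R_i}(o_i)$; the growth $R_i\to\infty$ together with $\epsilon_i\to 0$ allows the estimate to be propagated to arbitrary pairs of points lying in some common ball. Once this consistency is established, the remaining verifications of basepoint and measure convergence are routine portmanteau-style manipulations.
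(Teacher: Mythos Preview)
The paper does not supply its own proof of this statement; it is quoted from \cite{stugeo1} and \cite{gmsstability} and used as a black box. There is therefore nothing to compare against beyond the arguments in those references.

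Your compact-case argument is correct and is exactly what one finds there: the ambient space $(Z,d_Z)$ furnished by mGH convergence is already complete and separable, so the same embeddings witness mG convergence with no further work.

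Your pointed-case outline is also the standard construction carried out in \cite{gmsstability}. One forms the disjoint union, equips it with the chain pseudo-metric generated by the original metrics together with the bridges $d(x,f_i(x))\le\epsilon_i$ for $x\in B_{R_i}(o_i)$, and passes to the metric completion. You have correctly isolated the one nontrivial verification, namely that the chain metric restricts to the original $d_{X_i}$ and $d_{X_\infty}$ on each piece; the triangle-inequality computation you sketch (any chain leaving $X_i$ must enter and exit through bridge points in $B_{R_i}(o_i)$, where property (ii) controls the detour) is indeed how this is done. Basepoint convergence is then (i), and the weak convergence of measures follows by the pullback argument you describe: for $\varphi\in C_{bs}(Z)$ supported in a fixed ball, uniform continuity gives $|\varphi\circ\iota_i-\varphi\circ\iota\circ f_i|\le\omega_\varphi(\epsilon_i)$ on $B_{R_i}(o_i)$, and (iv) handles the limit of $\int\varphi\circ\iota\circ f_i\,dm_{B_R(o_i)}$. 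The one ingredient you should make explicit is a uniform-in-$i$ bound on $m_{X_i}(B_R(o_i))$ for each fixed $R$, so that the $\omega_\varphi(\epsilon_i)$ error integrates to something small; this follows from (iv) itself, tested against a continuous bump equal to $1$ on $B_R(o_\infty)$.
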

\begin{theorem}
If $\pmmsic_{i\in \N}$ converges in pmG sense to $\pmmscinf$
and $\pmmsi_{i\in \mathbb N}$ is  uniformly totally bounded, then $\pmmsi_{i\in\mathbb N}$ converges in pmGH
sense to a pmm space $(X,o)$ such that $\pmmsc=\pmmscinf$. 

In particular, if $\supp\m_{\sX_{\sinfty}}=X_{\sinfty}$, then $\pmmsi$ converges in pmGH to $\pmmsinf$
\end{theorem}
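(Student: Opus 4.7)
The plan is to combine the given pmG convergence with the pointed compactness statement in Theorem \ref{th:compactness}. The uniform total boundedness of $\pmmsi_{i \in \N}$ is assumed, so the only missing input for the compactness theorem is the uniform mass bound $\sup_{i}\m_{\sX_i}(B_R(o_i)) \leq C(R)<\infty$ for every $R>1$. First I would extract this bound from pmG convergence, then run the standard compactness plus uniqueness argument, and finally upgrade subsequential convergence to convergence of the full sequence.

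To obtain the mass bound, fix a complete and separable metric space $(Z,\de_{\sZ})$ together with isometric embeddings $\iota_i:X_i\hookrightarrow Z$ for $i\in \bar{\N}$ that realize the pmG convergence, so that $\iota_i(o_i)\to \iota_{\sinfty}(o_{\sinfty})$ in $Z$ and $(\iota_i)_{\star}\m_{\sX_i}\to (\iota_{\sinfty})_{\star}\m_{\sX_{\sinfty}}$ weakly in $\mathcal{M}_{loc}(Z)$. For fixed $R>0$ and $i$ large enough, the convergence of basepoints gives $B_R(\iota_i(o_i))\subset B_{R+1}(\iota_{\sinfty}(o_{\sinfty}))$ in $Z$. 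Pick a compactly supported $\chi\in C_{bs}(Z)$ with $0\le\chi\le 1$ and $\chi\equiv 1$ on $B_{R+1}(\iota_{\sinfty}(o_{\sinfty}))$. Since $\iota_i$ is isometric,
\[
\m_{\sX_i}(B_R(o_i)) \leq \int_Z \chi \, d(\iota_i)_{\star}\m_{\sX_i} \xrightarrow{i\to\infty} \int_Z \chi \, d(\iota_{\sinfty})_{\star}\m_{\sX_{\sinfty}} < \infty,
\]
which supplies the required uniform upper bound.

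Theorem \ref{th:compactness} now produces a subsequence $\pmmsi_{i_k}$ that converges in pmGH sense to some pmm space $\pmms$. Theorem \ref{th:gms} upgrades this to pmG convergence of the same subsequence to $\pmmsc$. Since the full sequence already converges in pmG sense to $\pmmscinf$, uniqueness of pmG limits (two pmG limits can be realized as weak limits of push-forwards into a common ambient space and therefore coincide up to a measure-preserving isometry fixing the basepoint) forces $\pmmsc=\pmmscinf$. Convergence of the full sequence rather than a mere subsequence then follows from the subsubsequence principle: every subsequence of $\pmmsi_{i\in \N}$ inherits the hypotheses, hence contains a further pmGH-convergent sub-subsequence whose limit class is again $\pmmscinf$.

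For the "in particular" assertion, if $\supp \m_{\sX_{\sinfty}}=X_{\sinfty}$, then any representative of $\pmmscinf$ with full support is isometrically isomorphic to $\pmmsinf$ as pointed metric measure spaces, because the defining isometric embedding of the isomorphism class is a surjection between complete spaces with dense images and preserves both the basepoint and the measure. Composing the pmGH maps in Definition \ref{pmGH} with this isomorphism realizes the pmGH convergence directly to $\pmmsinf$. The main obstacle in the argument is the first step, where one must carefully pass from balls in $X_i$ centered at the moving basepoints $o_i$ to a fixed ambient ball in $Z$, since weak convergence in $\mathcal{M}_{loc}(Z)$ is only tested against $C_{bs}(Z)$; once this bound is in hand, the remainder is a routine combination of the two cited theorems with the subsubsequence principle.
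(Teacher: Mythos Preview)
The paper states this theorem without proof; it is recorded as a preliminary fact (implicitly from \cite{gmsstability}), so there is no argument in the paper to compare against. Your outline for the main assertion is the standard one and is correct: extracting the uniform mass bound from weak convergence in the ambient space $Z$, invoking Theorem~\ref{th:compactness} for subsequential pmGH compactness, identifying the limit class via Theorem~\ref{th:gms} and uniqueness of pmG limits, and then upgrading via the sub-subsequence principle.

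There is, however, a genuine gap in your treatment of the ``in particular'' clause. The isomorphism witnessing $\pmmsc=\pmmscinf$ is, by definition, only an isometry between $\supp\m_{\sX}$ and $\supp\m_{\sX_{\sinfty}}=X_{\sinfty}$; it need not extend to all of $X$. Hence composing the pmGH maps $f_i^{R,\epsilon}:X_i\to X$ with this isomorphism does not produce maps $X_i\to X_{\sinfty}$ unless one already knows $\supp\m_{\sX}=X$. In general the pmGH limit $(X,o)$ produced by Theorem~\ref{th:compactness} can have $\supp\m_{\sX}\subsetneq X$: take $X_i=[0,1]\cup\{2\}\subset\R$ with $\m_{\sX_i}=\mathcal L^1|_{[0,1]}+\tfrac{1}{i}\delta_2$ and $o_i=0$; this sequence is uniformly totally bounded and pmG-converges to $([0,1],\mathcal L^1,0)$, yet the approximate-isometry condition (ii) in Definition~\ref{pmGH} fails for any map into $[0,1]$ once $R>2$. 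So the ``in particular'' as literally stated needs an extra hypothesis (for instance that the $(X_i,o_i)$ satisfy a uniform doubling condition, which forces $\supp\m_{\sX}=X$ for the pmGH limit), and your composition argument cannot succeed without it.
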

\subsection{Wasserstein space} Let $(X,\dex)$ be a complete and separable metric space.
The set of constant speed geodesics $\gamma:[0,1]\rightarrow X$ is denoted by $\mathcal{G}(X)$, 
and it is equipped with the topology of uniform convergence. $e_t:\gamma\mapsto \gamma(t)$ denotes the evaluation map at time $t$.
\smallskip

The $L^2$-Wasserstein space of Borel probability measures with finite second moment and the Wasserstein distance are denoted by 
$\mathcal{P}^2(X)$ and $W_{\sX}$, respectively. 
$\mathcal{P}^2_b(X)$ and $\mathcal{P}^2(\m_{\sX})$ denote the subset of probability measures with bounded support and
the family of $\m_{\sX}$-absolutely continuous probability measures, respectively.
\smallskip

We call a set $\Gamma\subset X^2$ $\frac{1}{2}d_{\sX}^2$-monotone or just monotone if for any finite collection $(x^1,y^1),\dots,(x^k,y^k)\in \Gamma$, $k\in \mathbb{N}$, we have 
\begin{align}\label{ineq:monotone}
\sum_{i=1}^k \frac{1}{2}d_{\sX}^2(x^i,y^i)\leq \sum_{i=1}^k\frac{1}{2}d_{\sX}^2(x^i,y^{\sigma(i)})
\end{align}
for any permutation $\sigma$ of $\left\{1,\dots,k\right\}$. 
The support $\supp\pi$ of an optimal coupling $\pi$ is a monotone set. If we replace $\frac{1}{2}d_{\sX}^2$ in \eqref{ineq:monotone} by a continuous function $c:X^2\rightarrow \mathbb{R}$, 
we call $\Gamma$ a $c$-monotone set.
\smallskip

A coupling or plan between probability measures $\mu_0$ 
and $\mu_1$ is a probability measure $\pi\in \mathcal{P}(X^2)$ such that $(p_i)_{\star}\pi=\mu_i$ 
where $(p_i)_{i=0,1}$ are the projection maps. We denote by $\Cpl(\mu_0,\m_1)$ the set of couplings between $\mu_0,\mu_1\in\mathcal{P}^2(X)$. A coupling $\pi\in \Cpl(\mu_0,\mu_1)$ is called optimal if 
$$
\int_{X^2}\dex(x,y)^2d\pi(x,y)=W_{\sX}(\mu_0,\mu_1)^2.
$$

A probability measure $\Pi\in \mathcal{P}(\mathcal{G}(X))$ is called an optimal dynamical coupling if $(e_0,e_1)_{\star}\Pi$ is an optimal coupling between its marginal distributions. 
Let $(\mu_t)_{t\in[0,1]}$ be a Wasserstein geodesic in $\mathcal{P}^2(X)$. We say an optimal dynamical coupling $\Pi$ is a lift of $\mu_t$ if $(e_t)_{\star}\Pi=\mu_t$ for every $t\in [0,1]$.
If $\Pi$ is the lift of a Wasserstein geodesic $\mu_t$, we call $\Pi$ itself a Wasserstein geodesic.
\medskip
\subsection{Curvature-dimension condition}
 Let $\mms$ be a mm space.
\smallskip
\noindent
Given $N\geq 1$ the $N$-R\'enyi entropy functional $S_{\sN}:\mathcal{P}^2(X)\rightarrow (-\infty,0)$ with respect to $\m_{\sX}$ is given by
\begin{align*}
\mu=\rho \m_{\sX}+\nu^*\mapsto S_{\sN}(\mu):=S_{\sN}(\mu|\m_{\sX})=-\int\rho^{1-\frac{1}{N}}(x)d\m_{\sX}.
\end{align*}
In the case $N=1$ the $1$-R\'eny entropy is $S_1(\mu)=-\m_{\sX}(\supp\rho)$. 
If $\m_{\sX}$ is finite, then 
$
-\m_{\sX}(X)^{\frac{1}{N}}\leq S_{\sN}(\cdot)\leq 0
$.
Moreover, if $N>1$
then $(\mu,\nu)\in \mathcal P(X)\times \mathcal P(X)\mapsto S_{\sN}(\mu|\nu)$ is lower semi-continuous w.r.t. weak convergence.
\medskip
\begin{definition}[generalized $\sin$-functions]\label{sin}
Let $\kappa:[0,L]\rightarrow \mathbb{R}$ be a continuous function.
The generalized $\sin$ function $\frs_{\kappa}:[0,L]\rightarrow \mathbb{R}$
is the solution of
\begin{align}\label{ode}
v''+\kappa v=0.
\end{align} 
such that $\frs_{\kappa}(0)=0$ and $\frs_{\kappa}'(0)=1$.
The generalized $\cos$-function is $\frc_{\kappa}=\frs_{\kappa}'$.
\end{definition}

\begin{definition}[Distortion coefficients]\label{generaldist}
Consider $\kappa:[0,{L}]\rightarrow \mathbb{R}$ that is continuous and $\theta \in [0,L]$. 
Then
\begin{align*}
\sigma_{\kappa}^{\sscr{(t)}}(\theta)=\begin{cases}
\frac{\frs_{{\kappa}}(t\theta)}{\frs_{{\kappa}}(\theta)}& \mbox{ if }\frs_{\kappa}(t)>0 \mbox{ for all }t\in (0,\theta]\\
\infty & \mbox{ otherwise }.
\end{cases}
\end{align*}
If $\sigma_{\kappa}^{\sscr{(t)}}(\theta)<\infty$, $t\mapsto\sigma_{\kappa}^{\sscr{(t)}}(\theta)$ is a solution of 
$
u''(t)+\kappa(t\theta)\theta^2u(t)=0
$
satisfying $u(0)=0$ and $u(1)=1$. We set $\sigma_{\kappa}^{\sscr{(t)}}(1)=\sigma_{\kappa}^{\sscr{(t)}}$ and
$\sigma_{\kappa}^{\sscr{(t)}}(\theta)=\sigma_{\kappa\theta^2}^{\sscr{(t)}}$.
We also define 
$
\pi_{\kappa}:= \sup\{ \theta\in (0,L]: \frs_\kappa(r)>0 \ \forall r\in (0,\theta]\}.
$\end{definition}

Consider a metric space $(X,\de_{\sX})$ and a continuous function $\kappa:X\rightarrow \mathbb{R}$.
We set $\kappa_{\gamma}=\kappa\circ\bar{\gamma}$ where $\gamma:[0,1]\rightarrow X$ is a constant speed geodesic and $\bar{\gamma}$ its unit speed reparametrization.
We denote by $\gamma^-(t)=\gamma(1-t)$ the reverse parametrization of $\gamma$, and we also write $\gamma=\gamma^{\sscr{+}}$, and $\kappa^{\sscr{-/+}}_{\gamma}:=\kappa_{\gamma^{-/+}}$.

\begin{proposition}\label{central}
Let $\VK:[a,b]\rightarrow \mathbb{R}$ be 
continuous and $u:[a,b]\rightarrow \mathbb{R}_{\geq 0}$ be an upper semi-continous. Then the following statements are equivalent:
\begin{itemize}
 \item[(i)]$u''+\VK u\leq 0$ in the distributional sense, that is 
\begin{align}\label{distributional}
\int_a^b\varphi''(t)u(t)dt\leq -\int_a^b\varphi(t)\VK(t)u(t)dt
\end{align}
for any $\varphi\in C_0^{\infty}\left((a,b)\right)$ with $\varphi\geq 0$.
\smallskip
 \item[(ii)] There is a constant $0<L\leq b-a$ such that
 \begin{align}\label{kuconcavity}
u(\gamma(t))\geq \sigma^{\sscr{(1-t)}}_{\VK^{\sscr{-}}_{\gamma}}(\theta)u(\gamma(0))+\sigma^{\sscr{(t)}}_{\VK^{\sscr{+}}_{\gamma}}(\theta)u(\gamma(1))
 \end{align}
for any constant speed geodesic $\gamma:[0,1]\rightarrow [a,b]$ with $\theta=|\dot{\gamma}|=\mbox{L}(\gamma)\leq L$. 
We set $\VK_{\gamma}=\VK\circ\bar{\gamma}:[0,\theta]\rightarrow \mathbb{R}$. $\bar{\gamma}:[0,\theta]\rightarrow [a,b]$ denotes the unit speed reparametrization of $\gamma$. We use the convention $\infty\cdot 0=0$.
\smallskip
\item[(iii)] The statement in (iii) holds for any geodesic $\gamma:[0,1]\rightarrow [a,b]$.
\end{itemize}
\end{proposition}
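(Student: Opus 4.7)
The proposition is a Sturm-type comparison result, and the plan is to prove $(iii) \Rightarrow (ii)$ trivially (any $L \in (0, b-a]$ works, since every geodesic in $[a,b]$ has length at most $b-a$) and then to establish the two nontrivial implications $(i) \Rightarrow (iii)$ and $(ii) \Rightarrow (i)$.

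For $(i) \Rightarrow (iii)$, I would fix a constant speed geodesic $\gamma : [0,1] \to [a,b]$, set $\theta = |\dot\gamma|$, and work with $U(r) := u(\bar\gamma(r))$ on $[0,\theta]$, which is upper semicontinuous and satisfies $U'' + \kappa_\gamma U \le 0$ distributionally on $(0,\theta)$. Assume first that both distortion coefficients in \eqref{kuconcavity} are finite; this means precisely that the ODE $v'' + \kappa_\gamma v = 0$ has no conjugate point on $(0,\theta]$, so the two-point boundary value problem with data $U(0), U(\theta)$ admits a unique classical solution $W$, and linearity identifies
\begin{align*}
W(s\theta) = \sigma^{(1-s)}_{\kappa^{-}_{\gamma}}(\theta)\, u(\gamma(0)) + \sigma^{(s)}_{\kappa^{+}_{\gamma}}(\theta)\, u(\gamma(1)).
\end{align*}
The difference $D := U - W$ is then upper semicontinuous on $[0,\theta]$, satisfies $D'' + \kappa_\gamma D \le 0$ distributionally, and obeys $D(0) = D(\theta) = 0$ in the USC sense. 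A Sturm maximum principle, proved by testing the distributional inequality against a strictly positive solution $\varphi$ of $\varphi'' + \kappa_\gamma \varphi = 0$ on $[0,\theta]$ (which exists precisely by absence of conjugate points) and rewriting the estimate in terms of $D/\varphi$, then forces $D \ge 0$, which is exactly \eqref{kuconcavity}. If one of the distortion coefficients is infinite, either the right-hand side of \eqref{kuconcavity} is $+\infty$ on a factor where $u$ vanishes (trivial by $\infty \cdot 0 = 0$) or the conclusion reduces to the finite case by replacing $\kappa$ with $\kappa - \varepsilon$, applying the argument, and passing to the limit using continuity of $\sigma$ in its parameter.

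For $(ii) \Rightarrow (i)$, I would derive the distributional inequality from \eqref{kuconcavity} on vanishingly short subgeodesics. Starting from $\frs_\kappa(r) = r - \tfrac{1}{6}\kappa(0) r^3 + o(r^3)$ one obtains, uniformly on compact subintervals of $(a,b)$,
\begin{align*}
\sigma^{(1/2)}_{\kappa^{-}_{\gamma}}(\theta) + \sigma^{(1/2)}_{\kappa^{+}_{\gamma}}(\theta) = 1 + \tfrac{\theta^2}{8}\,\kappa(\bar\gamma(\theta/2)) + o(\theta^2) \quad \text{as } \theta \to 0.
\end{align*}
Fix $\varphi \in C_0^\infty((a,b))$ with $\varphi \ge 0$ and apply \eqref{kuconcavity} at $t = 1/2$ to the unit speed geodesic from $s-h$ to $s+h$ for each $s$ in $\supp\varphi$ (with $\theta = 2h \le L$). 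Multiplying by $\varphi(s)$, integrating in $s$, and using Fubini to rewrite the left-hand side as $\int u(s)\,[\varphi(s-h) + \varphi(s+h) - 2\varphi(s)]\,ds$, then dividing by $h^2$ and letting $h \to 0$, the left-hand side converges to $\int u \varphi''$ by dominated convergence (since $\varphi$ is smooth), the right-hand side converges to $-\int \kappa u \varphi$ by the Taylor expansion above and continuity of $\kappa$, and \eqref{distributional} follows.

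The main obstacle is the Sturm maximum principle step in $(i) \Rightarrow (iii)$: since $U$ is only upper semicontinuous, the classical strong maximum principle does not apply pointwise, and one must be careful about the meaning of $D(0) = D(\theta) = 0$. The cleanest workaround is to replace $U$ by its sup-convolution $U_\varepsilon$, which is continuous and still satisfies a distributional differential inequality with a slightly perturbed coefficient $\kappa_\varepsilon$, carry out the comparison with the corresponding $W_\varepsilon$ using the test-function identity $(D_\varepsilon \varphi' - D_\varepsilon' \varphi)' \ge 0$, and let $\varepsilon \to 0$ invoking upper semicontinuity of $U$ at the endpoints. The infinite-coefficient case is a secondary technicality addressed by the same $\varepsilon$-perturbation of $\kappa$.
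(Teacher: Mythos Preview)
The paper does not actually prove Proposition~\ref{central}; it is stated without proof as a known ingredient, presumably taken from the author's earlier work \cite{ketterer5} where the variable curvature-dimension condition was introduced. So there is no ``paper's own proof'' to compare against.

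Your proposal is essentially correct and follows the standard Sturm comparison route. A few remarks:

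For $(i)\Rightarrow(iii)$, the regularization step is heavier than needed. The distributional inequality $u''\le -\kappa u$ forces $u''$ to be a locally finite signed measure, so $u$ is automatically locally semiconcave and hence continuous on the open interval $(a,b)$; upper semicontinuity is only relevant at the endpoints. This means you can run the Sturm maximum principle directly on $D=U-W$ without sup-convolution: the quantity $D/\varphi$ (with $\varphi>0$ solving $\varphi''+\kappa_\gamma\varphi=0$ on $[0,\theta]$) is concave after the standard change of variable, and its nonnegativity follows from the boundary values.

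For $(ii)\Rightarrow(i)$, your Taylor expansion is correct and the argument is fine. Note that condition (ii) applied at $t=1/2$ already implies $u$ is approximately midpoint-concave, hence continuous in the interior; this justifies the limit $\int\kappa(s)(u(s-h)+u(s+h))\varphi(s)\,ds\to 2\int\kappa u\varphi$ that you need on the right-hand side. You should also check that the $o(\theta^2)$ error in the expansion of the distortion coefficients is uniform on $\supp\varphi$, which follows from continuity of $\kappa$ on that compact set.

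The handling of the infinite-coefficient case via $\kappa\mapsto\kappa-\varepsilon$ and monotone convergence is the right idea.
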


The modified distortion coefficient along $\gamma:[0,1]\rightarrow X$ w.r.t. $\kappa\in C(X)$ and $N\in [1,\infty)$ is given by
\begin{align*}
(t,\kappa,N)\mapsto\tau_{\kappa_{\gamma},N}^{\sscr{(t)}}(|\dot{\gamma}|)=
                                             t^{\frac{1}{N}}\left[\sigma_{\kappa_{\gamma}/(N-1)}(|\dot{\gamma}|)\right]^{1-\frac{1}{N}} & \mbox{ otherwise}
\end{align*}
where $r\cdot\infty=\infty$ for $r>0$, $0\cdot\infty=0$ and $\infty^{\alpha}=\infty$ for $\alpha>0$. By the Sturm-Picone comparison theorem
$\tau_{\kappa,N}^{\sscr{(t)}}(|\dot \gamma|)$ is non-decreasing in $\kappa$ and non-increasing in $N$.  

The following definition was introduced in \cite{ketterer5}.
\begin{definition}\label{bigg}
We say that an mm space $X\neq \{pt\}$ satisfies the \textit{curvature-dimension condition}
$CD(\VK,N)$ for $\kappa\in C_b(X)$ and $N\geq 1$ if for each pair $\nu_0,\nu_1\in \mathcal{P}_b^2(\m_{\sX})$ 
there exists an $L^2$-Wasserstein geodesic $(\nu_t)_{t\in[0,1]}\subset\mathcal{P}^2(\m_{\sX})$ and a dynamical optimal coupling $\Pi$ with $(e_t)_{\star}\Pi=\nu_t$ such that
\begin{align}\label{curvaturedimension}
S_{N'}(\nu_t)\leq{\textstyle -}\!\!\int\Big[\tau_{\VK^{\sscr{-}}_{\gamma},N'}^{\sscr{(1-t)}}(|\dot{\gamma}|)\varrho_0\left(e_0(\gamma)\right)^{-\frac{1}{N'}}+\tau_{\VK^{\sscr{+}}_{\gamma},N'}^{\sscr{(t)}}(|\dot{\gamma}|)\varrho_1\left(e_1(\gamma)\right)^{-\frac{1}{N'}}\Big]d\Pi(\gamma)
\end{align}
for all $t\in [0,1]$ and all $N'\geq N$ where $[\nu_i]_{ac}=\rho_i$, $i=0,1$.
\end{definition}
For $x\rightarrow \kappa(x)=:K\in \mathbb{R}$ the definition is exactly the curvature-dimension condition as introduced by Lott-Sturm-Villani in \cite{stugeo1, stugeo2, lottvillani}.

As consequence of the monotinicity of the distortion coefficients we obtain the following property.
\begin{proposition}\label{htht}
Let $(X,\de_{\sX},\m_{\sX})$ be a metric measure space which satisfies the condition $CD(\VK,N)$ for a continuous function $\VK:X\rightarrow \mathbb{R}$ and $N\geq 1$. 

If $\VK':X\rightarrow \mathbb{R}$ is a continuous function such that $\VK'\leq \VK$, and if $N'\geq N$, then $(X,\de_{\sX},\m_{\sX})$ also satisfies the condition
$CD(\VK',N')$. 
\end{proposition}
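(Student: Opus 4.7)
The strategy is to invoke the monotonicity properties of the modified distortion coefficients $\tau_{\kappa_\gamma,N}^{(t)}(|\dot\gamma|)$ recorded just before Definition \ref{bigg}: as a consequence of the Sturm--Picone comparison theorem, these coefficients are non-decreasing in the curvature argument $\kappa$ and non-increasing in the dimension parameter $N$. Because the definition of $CD(\kappa,N)$ already quantifies over every $N''\geq N$, the dimensional half of the statement $N'\geq N$ is essentially built into the formulation, and the genuine content of the proposition lies in the monotonicity with respect to the curvature function.

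Fix $\nu_0,\nu_1\in\mathcal{P}_b^2(\m_{\sX})$ with $\nu_i=\varrho_i\m_{\sX}$. By the hypothesis $CD(\kappa,N)$, there exist an $L^2$-Wasserstein geodesic $(\nu_t)_{t\in[0,1]}\subset\mathcal{P}^2(\m_{\sX})$ and an optimal dynamical coupling $\Pi$ with $(e_t)_\star\Pi=\nu_t$ such that \eqref{curvaturedimension} holds for every $N''\geq N$. Since $N'\geq N$, any $N''\geq N'$ also satisfies $N''\geq N$, so the same pair $((\nu_t),\Pi)$ is an admissible competitor for testing the putative $CD(\kappa',N')$ inequality at the parameter $N''$, and it suffices to compare the right-hand sides of \eqref{curvaturedimension} corresponding to the data $(\kappa,N'')$ and $(\kappa',N'')$.

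For each $\gamma\in\supp\Pi$ the pointwise bound $\kappa'_\gamma\leq \kappa_\gamma$ on $[0,|\dot\gamma|]$ together with the monotonicity of $\tau$ in its first argument yields
\begin{align*}
\tau_{(\kappa')^{-}_\gamma,N''}^{(1-t)}(|\dot\gamma|) &\leq \tau_{\kappa^{-}_\gamma,N''}^{(1-t)}(|\dot\gamma|), \\
\tau_{(\kappa')^{+}_\gamma,N''}^{(t)}(|\dot\gamma|) &\leq \tau_{\kappa^{+}_\gamma,N''}^{(t)}(|\dot\gamma|),
\end{align*}
valued in $[0,\infty]$. Multiplying by the non-negative factors $\varrho_i(e_i(\gamma))^{-1/N''}$, integrating against $\Pi$, and negating the resulting inequality shows that the right-hand side of \eqref{curvaturedimension} for the data $(\kappa,N'')$ is bounded above by the analogous expression for $(\kappa',N'')$. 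Chaining this with the hypothesis produces the required $CD(\kappa',N')$ inequality for every $t\in[0,1]$ and every $N''\geq N'$.

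The only point requiring any caution is the degenerate case where a generalized $\sin$-function vanishes and the corresponding coefficient takes the value $+\infty$; the convention $\infty\cdot 0=0$ stated in Definition \ref{bigg} together with the monotonicity relations, which remain valid when read in the extended half-line $[0,\infty]$, makes the argument go through verbatim. Beyond that, the proof is essentially a direct bookkeeping exercise, and I do not anticipate any substantive obstacle.
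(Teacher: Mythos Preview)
Your proposal is correct and follows exactly the approach the paper indicates: the paper states the proposition as an immediate ``consequence of the monotonicity of the distortion coefficients'' (the Sturm--Picone fact recorded just before Definition \ref{bigg}) without spelling out further details. Your write-up simply makes that monotonicity argument explicit, including the observation that the quantification over $N''\geq N$ in Definition \ref{bigg} already absorbs the passage from $N$ to $N'$.
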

Since we assume $\kappa\in C_b(X)$, the condition $CD(\kappa,N)$ implies the Lott-Sturm-Villani curvature-dimension condition $CD(K,N)$  for  $K\in \mathbb{R}$ with $\kappa\geq K$. In particular,  Bishop-Gromov volume growth estimate holds \cite{stugeo2,erbarkuwadasturm,ketterer5}, the space is locally compact and bounded sets have finite measure. 
Moreover, the $(\supp\m_{\sX},\de_{\supp\m_{\sX}})$ is a geodesic metric space.

\begin{proposition}\label{suddenlyimportant}
Let $X$ be a metric measure space which satisfies the condition $CD(\VK,N)$ for $\VK\in C_b(X)$ and $N\geq 1$. 
\begin{itemize}
\item[(i)]
If there is an isomorphism $\psi:(X,\de_{\sX},\m_{\sX})\rightarrow (X',\de_{\sX'},\m_{\sX'})$ onto a metric measure space $(X',\de_{\sX'},\m_{\sX'})$ then $(X',\de_{\sX'},\m_{\sX'})$
satisfies the condition $CD(\psi^{\star}\VK,N)$ with $\psi^{\star}\VK=\VK\circ\psi$.
\item[(ii)]
For $\alpha,\beta>0$ the rescaled metric measure space $(X',\alpha\de_{\sX'},\beta\m_{\sX'})$ satisfies $CD(\alpha^{-2}\VK,N)$.
\item[(iii)]
For each geodesically convex subset $U\subset X$ the metric measure space \linebreak[4] $(U,\de_{\sX}|_{U\times U},\m_{\sX}|_{U})$ satisfies $CD(\VK|_{U},N)$.
\end{itemize}
\end{proposition}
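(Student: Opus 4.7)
The plan is to treat each of the three parts separately; in each case the strategy is to reduce the assertion to the transformation behavior of (a) dynamical optimal couplings, (b) the $N$-Rényi entropy, and (c) the modified distortion coefficients $\tau^{(t)}_{\kappa_\gamma,N}(|\dot\gamma|)$. I would then invoke Definition \ref{bigg} in the new space with the transported data.

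For (i) I would lift the isomorphism $\psi:X\to X'$ to the dynamical level. Since $\psi$ is an isomorphism of metric measure spaces it induces by pushforward a bijection between $\mathcal P^2_b(\m_{\sX})$ and $\mathcal P^2_b(\m_{\sX'})$, between optimal (dynamical) couplings, and between unit speed geodesics (via $\bar\gamma'=\psi\circ\bar\gamma$). Given $\nu_0',\nu_1'\in\mathcal P^2_b(\m_{\sX'})$, pull back to $\nu_i=\psi^{-1}_*\nu_i'$, obtain by $CD(\kappa,N)$ a lift $\Pi$ of a Wasserstein geodesic satisfying \eqref{curvaturedimension}, and push forward to $\Pi'=\psi_*\Pi\in\mathcal P(\mathcal G(X'))$. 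The identities $\kappa_\gamma=(\psi^\star\kappa)_{\psi\circ\gamma}$, $|\dot\gamma|=|\dot{(\psi\circ\gamma)}|$, and $\rho_i\circ\psi^{-1}=\rho_i'$ (the densities w.r.t.\ $\m_{\sX'}=\psi_\star\m_{\sX}$) reduce \eqref{curvaturedimension} for $(X',\psi^\star\kappa)$ to the known inequality in $X$.

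For (ii) the argument splits into the scaling of the metric and the scaling of the measure. Rescaling the metric by $\alpha$ preserves (reparametrized) geodesics and multiplies lengths by $\alpha$. If $\bar\gamma:[0,\theta]\to X$ is unit-speed in $\de_{\sX'}$ and $\bar\gamma_\alpha(s)=\bar\gamma(s/\alpha)$ is unit-speed in $\alpha\de_{\sX'}$, then a direct ODE calculation with \eqref{ode} shows
\[
\frs_{\alpha^{-2}\kappa(\cdot/\alpha)}(s)=\alpha\cdot\frs_{\kappa}(s/\alpha),
\]
so the $\alpha$ factors cancel in the ratio defining $\sigma^{(t)}$ and one obtains
\[
\sigma^{(t)}_{(\alpha^{-2}\kappa)_{\gamma_\alpha}}(\alpha\theta)=\sigma^{(t)}_{\kappa_\gamma}(\theta),
\]
hence $\tau^{(t)}_{(\alpha^{-2}\kappa)_{\gamma_\alpha},N}(\alpha\theta)=\tau^{(t)}_{\kappa_\gamma,N}(\theta)$. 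Rescaling the measure by $\beta$ transforms densities as $\rho\mapsto \rho/\beta$, so $S_N(\mu|\beta\m_{\sX'})=\beta^{1/N}S_N(\mu|\m_{\sX'})$; the same factor $\beta^{1/N}$ appears on the right-hand side of \eqref{curvaturedimension} (since $\rho_i^{-1/N}$ becomes $\beta^{1/N}\rho_i^{-1/N}$ and the integration is against the new $\Pi$, whose marginals absorb the $\beta$). Combining both scalings yields \eqref{curvaturedimension} for $CD(\alpha^{-2}\kappa,N)$ on $(X',\alpha\de_{\sX'},\beta\m_{\sX'})$.

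For (iii) the key observation is that geodesic convexity of $U$, together with the fact that the $CD$ condition produces a dynamical optimal coupling $\Pi$ supported on geodesics whose endpoints are in $U$, forces $\Pi$ to be concentrated on $\mathcal G(U)$; therefore the Wasserstein geodesic $(e_t)_*\Pi$ lies in $\mathcal P^2(\m_{\sX}|_U)$. Given $\nu_0,\nu_1\in\mathcal P^2_b(\m_{\sX}|_U)$, view them as elements of $\mathcal P^2_b(\m_{\sX})$, apply $CD(\kappa,N)$ in $X$ to obtain $\Pi$, observe that $\kappa_\gamma=(\kappa|_U)_\gamma$ for every $\gamma\in\mathcal G(U)$, and note that densities with respect to $\m_{\sX}$ and with respect to $\m_{\sX}|_U$ coincide on $U$. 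Hence \eqref{curvaturedimension} on $X$ restricts verbatim to the required inequality on $U$. The main obstacle here, and the only nontrivial point in the whole proposition, is the ODE identity for the scaled $\frs_\kappa$ in part (ii); everything else is essentially bookkeeping.
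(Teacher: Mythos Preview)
The paper states this proposition without proof; it is treated as a routine verification of how the defining inequality \eqref{curvaturedimension} behaves under isomorphisms, rescalings, and restriction to convex subsets. Your outline is correct and is exactly the standard argument one would give: the only computation with any content is the scaling identity $\frs_{\alpha^{-2}\kappa(\cdot/\alpha)}(s)=\alpha\,\frs_\kappa(s/\alpha)$ in part (ii), which you have identified and verified, and the rest is indeed bookkeeping. One small caveat in (iii): your argument tacitly uses that \emph{every} geodesic between points of $U$ stays in $U$ (so that the support of $\Pi$ is forced into $\mathcal G(U)$), which is the intended meaning of ``geodesically convex'' here.
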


Let $(M,g_{M},e^{-V}\vol_M)$ be a weighted Riemannian manifold with $V\in C^{\infty}(M)$.
We recall that
for each real number $N>n$ the Bakry-Emery $N$-Ricci tensor is defined as
\begin{align*}
\ric^{N,V}(v)
&=\ric(v)-\nabla^2 V(v,v) - \frac{1}{N-n} \langle \nabla V, v\rangle^2
\end{align*}
where $v\in TM_p$. For $N=n$ we define
$$\ric^{N,V}(v):=
\begin{cases}
\ric(v)
+\nabla^2 V(v,v) &d V (v)=0\\
-\infty & \mbox{else}.
\end{cases}$$
For $1\leq N<n$ we define $\ric^{N,V}(v):=-\infty$ for all $v\neq 0$ and $0$ otherwise.

The following theorem appears in \cite{ketterer5}.
\begin{theorem}\label{smoothcase}
Let $(M,g_{\sM},e^{-V}d\vol_{\sM})$ be a weighted Riemannian manifold for $V\in C^{\infty}(V)$. Let $\VK:M\rightarrow \mathbb{R}$ be continuous and $N\geq 1$. 

Then, the mm space $(M,\de_{\sM},e^{-V}d\vol_{\sM})$ satisfies the condition $CD(\VK,N)$ if and only if it has $N$-Ricci curvature bounded from below by $\VK$.
\end{theorem}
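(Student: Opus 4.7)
The plan is to extend the Sturm--Lott--Villani blueprint from constant lower Ricci bounds to the variable case, using Proposition~\ref{central} as the single new structural tool. Proposition~\ref{central} identifies the $\sigma_{\kappa/(N-1)}^{(t)}$-concavity hidden inside $\tau_{\kappa,N}^{(t)}$ with the distributional differential inequality $u''+\kappa u\le 0$; it thereby reduces $CD(\kappa,N)$ to a pointwise Jacobian estimate along each transport geodesic, exactly as in the classical constant-$K$ case, with $\kappa$ now entering as a variable coefficient rather than as a constant.

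For the implication $\ric^{N,V}\ge \kappa \Rightarrow CD(\kappa,N)$, I would fix $\nu_0=\rho_0\, e^{-V}\vol_M$ and $\nu_1=\rho_1\, e^{-V}\vol_M$ in $\mathcal{P}^2_b(e^{-V}\vol_M)$, and invoke McCann's theorem to produce a $c$-concave potential $\varphi$ such that $T_t(x):=\exp_x(-t\nabla\varphi(x))$ realizes the unique $L^2$-Wasserstein geodesic $\nu_t:=(T_t)_\star\nu_0$, with dynamical coupling $\Pi:=(T_{(\cdot)})_\star\nu_0$. For $\nu_0$-a.e.\ $x$, the curve $\gamma_x(t):=T_t(x)$ is a minimizing geodesic, and the weighted Jacobian $\mathcal{J}_t(x):=\det(dT_t|_x)\,e^{V(x)-V(T_t(x))}$ satisfies $\rho_t(T_t(x))\mathcal{J}_t(x)=\rho_0(x)$. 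A Jacobi-field / Raychaudhuri computation together with the Bakry--\'Emery trick produces, in the distributional sense on $(0,1)$,
\begin{equation*}
 (\log \mathcal{J}_t(x))'' + \tfrac{1}{N}\bigl((\log \mathcal{J}_t(x))'\bigr)^2 + \ric^{N,V}(\dot\gamma_x(t)) \le 0.
\end{equation*}
Factoring $\mathcal{J}_t(x)^{1/N}$ into a one-dimensional longitudinal contribution (linear in $t$, supplying the $t^{1/N}$ piece of $\tau$) and a transverse $(N-1)$-dimensional factor $z_x$, the latter raised to the power $1/(N-1)$ satisfies $z_x'' + \tfrac{1}{N-1}\kappa_{\gamma_x}(t)|\dot\gamma_x|^2 z_x \le 0$ distributionally. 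Proposition~\ref{central} then forces $\sigma_{\kappa_{\gamma_x}/(N-1)}$-concavity of $z_x$, and reassembly yields the pointwise inequality
\begin{equation*}
 \rho_t(T_t(x))^{-1/N} \ge \tau^{(1-t)}_{\kappa^{-}_{\gamma_x},N}(|\dot\gamma_x|)\,\rho_0(x)^{-1/N} + \tau^{(t)}_{\kappa^{+}_{\gamma_x},N}(|\dot\gamma_x|)\,\rho_1(T_1(x))^{-1/N}.
\end{equation*}
Integrating against $\nu_0$ recovers \eqref{curvaturedimension} at $N'=N$, and Proposition~\ref{htht} extends it to all $N'\ge N$.

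For the converse I would linearize. Fix $p\in M$ and a unit $v\in T_pM$, and choose absolutely continuous $\nu_0^\epsilon,\nu_1^\epsilon$ supported in balls of radius $o(\epsilon)$ around $\exp_p(\mp\tfrac{\epsilon}{2}v)$. Applying \eqref{curvaturedimension} at $t=\tfrac12$ and Taylor expanding both sides in $\epsilon$ in normal coordinates at $p$, the $\epsilon^0$ and $\epsilon^1$ terms cancel; the $\epsilon^2$ coefficient produces exactly $\ric^{N,V}(v)\ge \kappa(p)|v|^2$, after using the expansions of $\frs_{\kappa/(N-1)}$ and of the weighted Riemannian volume element. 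Varying $p$ and $v$ gives $\ric^{N,V}\ge\kappa$ pointwise.

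The step I expect to be delicate rather than routine is making the Jacobian estimate in the forward direction rigorous across the cut locus and, more generally, on the null set where $T_t$ fails to be smooth. This is handled via McCann's Alexandrov second-differentiability of $c$-concave functions, so that $\mathcal{J}_t(x)$ is defined for $\nu_0$-a.e.\ $x$, and by verifying the Raychaudhuri inequality in the distributional sense along $\gamma_x$; once in this form, Proposition~\ref{central} applies without pointwise smoothness and yields the variable-$\kappa$ $\tau$-concavity uniformly in $x$, which is the only genuinely new feature beyond the classical constant-$K$ argument.
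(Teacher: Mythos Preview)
Your proposal is correct and follows essentially the same route as the paper: McCann's map, the weighted Jacobian, a transverse/longitudinal splitting, Proposition~\ref{central} applied to the transverse factor, and then reassembly into the $\tau$-inequality. Two remarks. First, the paper deliberately sets up the argument from an interior time $t_0\in(0,1)$ via the Monge--Mather principle rather than from $t=0$; this is not needed for the theorem itself but is reused verbatim in Sections~5--6 for the Area/Co-area disintegration and the displacement convexity estimate, so the framing is chosen with those applications in mind. Second, your description of the longitudinal factor as ``linear in $t$'' is slightly off: in general the ratio $L_x(t)=\det A_x(t)/\det B_x(t)$ is only \emph{concave} (this is Sturm's lemma, cited in the paper as part~(c) of the proof of \cite[Theorem~1.7]{stugeo2}), and it is concavity---not linearity---that combines with the $\sigma_{\kappa/(N-1)}$-concavity of $\mathcal I_x^{1/(N-1)}$ to produce the $\tau_{\kappa,N}$-inequality for $\mathcal J_x^{1/N}$. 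Finally, note that the paper explicitly skips the converse direction; your linearization sketch is the standard way to recover it.
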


\begin{proof} For completeness
we give a self-contained proof based on the Monge-Mather principle.
This allows us to introduce some concepts that will be used later again.
We assume familiarity with the concept of $\frac{1}{2}d^2$-concavity and Kantorovich duality.
\begin{theorem}[Monge-Mather principle, \cite{viltot} Corollary 8.2]
Let $(M,g_M)$ be a Riemannian manifold, and let $E\subset M$ be a compact subset.
Let $\gamma^1,\gamma^2:[0,1]\rightarrow E$ be two minimizing geodesics that satisfy 
\begin{align*}
d(\gamma^1(0),\gamma^1(1))^2+d(\gamma^2(0),\gamma^2(1))^2\leq d(\gamma^1(0),\gamma^2(1))^2+ d(\gamma^2(0),\gamma^1(1))^2.
\end{align*}
Then, there exists a constant $C_E>0$ such that for any $t_0\in (0,1)$ we have
\begin{align*}
d(\gamma^1(t),\gamma^2(t))\leq \frac{C_E}{\min(t_0,1-t_0)}d(\gamma^1(t_0),\gamma^2(t_0)), \ \forall t\in[0,1].
\end{align*}

\end{theorem} 

{\bf 1.}
We set $\m:= e^{-V}\vol_M$ and let $\mu_0,\mu_1\in \mathcal P_b^2(\m)$. We find $R>0$ such that $\mu_0$ and $\mu_1$ are supported in $B_{R/2}(o)$ for some $o\in M$. We set $B_R:= B_R(o)$ and $\bar B_R:= \bar B_R(o)$.

We first recall some general facts about $L^2$-optimal transport. 
 
There exists a dynamical optimal plan $\Pi\in \mathcal P(\mathcal G(X))$ such that $t\in [0,1]\mapsto (e_t)_{\#}\Pi=\mu_t$ is a $W_2$-geodesic in $\mathcal P^2(M)$.  Let $\supp\Pi =\Gamma$. The measures $\mu_t$ are supported in $(B_R)_t:=(e_t)(\Gamma)\subset B_R$.

{\bf 2.} {\it Claim.  $\mu_t\in \mathcal P^2(\m)$.}

We fix $t_0\in (0,1)$.
$(e_0,e_1)\circ e_{t_0}^{-1}((B_R)_{t_0})$ is a monotone subset of $B_{R/2}(o)\times B_{R/2}(o)$. 
Hence, for $t\in [0,1]$ the map $e_t\circ e_{t_0}^{-1}: (B_R)_{t_0}\rightarrow M$ is a Lipschitz continuous map by the Monge-Mather  principle. 
Let $t=0$. By the Kirzbraun theorem $e_{0}\circ e_{t_0}^{-1}$ admits a Lipschitz extension $T_{t_0}^0: \bar B_R(o)\rightarrow M$.

Then, the optimal plan $\pi_{t_0}^0=(e_{t_0},e_0)_{\#}\Pi$ between $\mu_{t_0}$ and $\mu_0$ is induced by the Lipschitz map $T_{t_0}^0$, that is $\pi^0_{t_0}=(\mbox{id}_{\bar B_R(o)}, T_{t_0}^0)_{\#} \mu_{t_0}$. Hence we get
\begin{align*}
\mu_{t_0}(A)= \pi_{t_0}^0(A,M)=\pi_{t_0}^0(A,T_{t_0}^0(A))\leq \pi_{t_0}^0(M,T_{t_0}^0(A))=\mu_0(T_{t_0}^0(A)).
\end{align*}
If $\mathcal R$ is a set with $\m(\mathcal R)=0$, then we get $\vol_M(\mathcal R)=0$. By Lipschitz continuity of $T_{t_0}^0$ it follows
 that $\vol_M(T_{t_0}^0(\mathcal R))=0$, hence $\m(T_{t_0}^0(\mathcal R))=\mu_0(T_{t_0}^0(\mathcal R))=0$ and therefore $\mu_{t_0}(\mathcal R)=0$. 
Since $t_{0}\in (0,1)$ was arbitrary so far, we  see that $\mu_{t_0}\in \mathcal{P}^2(\m)$ for all $t_0\in (0,1)$.

 {\bf 3.}
 There exists a $\frac{1}{2}d^2$-convex function $\phi:\bar B_R \rightarrow \R$ such that the following holds.
The Hamilton-Jacobi shift of $\phi$ is given by $$\phi_{t}(x)=\inf_{y\in M}\left\{\frac{1}{2t}d(x,y)^2-\phi(y)\right\},  \ \ \ x\in \bar B_R, \ t\in (0,1].$$
The function $\phi_{t}:\bar B_R\rightarrow \mathbb{R}$ is $\frac{1}{2t}d^2$-concave and hence Lipschitz continuous and semi-convex.
Let  $t_0\in (0,1)$ be as above. Then it is well-known \cite[Theorem 7.35]{viltot} that  the pair $(\phi_{t_0},\phi_s)$ is a solution for the dual Kantorovich problem w.r..t. $\mu_{t_0}$ and $\mu_s$ for $s\in [0,t_0)$, and similar $(-\phi_{t_0},\phi_s)$ is a solution of the dual Kantorovich problem w.r.t. $\mu_{t_0}$ and $\mu_s$ for $s\in (t_0,1]$. Let $M_{t_0}$ be the set of full $\m$-measure in $B_R$ where $\phi_{t_0}$ is differentiable. 
By a theorem of McCann  \cite{mccann}
$$\gamma_x(t)=\exp_x(-(t_0-t)\nabla\phi_{t_0}|_x)=:\tilde T_{t_0}^t(x), \ \  x\in  M_{t_0}$$
is the optimal map between $\mu_{t_0}$ and $\mu_s$ for all  $s\in [0,1]\backslash\{t_0\}$. 
By uniqueness of optimal maps it holds $T_{t_0}^t= \tilde T_{t_0}^t$ $\mu_{t_0}$-a.e. where $T_{t_0}^t=e_t\circ e_{t_0}^{-1}$ from before.  In particular, recall that  $T_{t_0}^t:\bar B_{R}(o) \rightarrow M$ is Lipschitz.

Since $\phi_t$ is semi-convex, there is a set $M_{t_0}'
\subset M_{t_0}$ of full $\m$-measure where $\phi_{t_0}$ is twice differentiable in Alexandrov sense, and $T_{t_0}^t$ admits a weak differential $DT_{t_0}^t$ on $M'_{t_0}$ for every $t\in [0,1]$.

{\bf 4.} {\it Monge-Amp\'ere inquality.}

By the area formula we obtain
\begin{align*}
\int_U\det DT_{t_0}^t(x)  \rho_t(T_{t_0}^t(x)) e^{-V(T_{t_0}^t(x))}d\vol_M(x) = \int_{T_{t_0}^t(U)} \mathcal{H}^0((T_{t_0}^t)^{-1}(x)) \rho_{t}(x) d\m(x)
\end{align*}
for every measurable subset $U$ and $t\in [0,1]$ and $t_0\in (0,1)$. On the other hand, by the measure theoretic transformation formula we deduce
\begin{align*}
 \int_{T_{t_0}^t(U)} \rho_{t}(x) d\m (x)= \int_{T_{t_0}^t(U)} d (T_{t_0}^t)_{\#}\mu_{t_0}(x)= \int_U d\mu_{t_0}(x)=\int_U \rho_{t_0}(x)e^{-V(x)}d\m(x).
 \end{align*}
Since $U$ was arbitrary, we obtain the Monge-Ampere inequality 
 \begin{align*}
 \rho_{t_0}(x)e^{-V(x)} \leq \det DT_{t_0}^t (x)  \rho_t(T_{t_0}^t(x))e^{-V(T_{t_0}^t(x))}\ \ \ \mbox{ for }x\in M_{t_0}'
 \end{align*}
 for $t_0\in (0,1)$ and $t\in [0,1]$ with equality if $t\in (0,1)$. 
 
 We set $DT_{t_0}^t(x)=:A_t(x)$.
  
{\bf 5.} Let us fix a transport geodesic $\gamma_x=T_{t_0}^t(x)$ for some $x\in M_{t_0}'$. The vector field $J_v: t\in [0,1]\rightarrow DT_{t_0}^t(x)v\in T_{\gamma_x(t)}M$ along $\gamma_x$ is smooth and satisfies the Jacobi equation for any vector $v\in T_{x}M$ with initial value $J_v(t_0)=v$. If $v\in (\gamma_x(t_0))^{\perp}$ then $J_v\in (\gamma_x(t))^{\perp}$ for every $t\in [0,1]$. 
Hence, one can define a symmetric linear map $B_x(t):= DT_{t_0}^t|_{\gamma_x(t_0)^{\perp}}: \gamma_x(t_0)^{\perp}\rightarrow \gamma_x(t)^{\perp}$. 
Then
Jacobi field calculus yields that $\log \det B_x(t)=:y_x(t)$ solves
\begin{align*}
y_x''(t)+ \frac{1}{n-1}(y_x'(t))^2 + \ric_{\gamma_x(t)}(\dot{\gamma}_x,\dot{\gamma}_x)\leq 0.
\end{align*}
Moreover, it is easy to check that $y_x(t)- v\circ \gamma_x(t)=:z_x(t)$  solves
\begin{align*}
z_x''(t) +\frac{1}{N-1}(z_x'(t))^2 + \ric^{N,V}_{\gamma_x(t)}(\dot{\gamma}_x,\dot{\gamma}_x)\leq 0.
\end{align*}
Hence, with $\ric^{N,V}\geq \kappa$ we deduce that $\det B_x(t)e^{-V\circ\gamma_x(t)}=:\mathcal I_x(t)$ solves
\begin{align*}
\frac{d^2}{dt^2} \mathcal I_x(t)^{\frac{1}{N-1}} + \frac{\VK}{N-1} |\dot{\gamma}_x|^2 \mathcal I_x(t)^{\frac{1}{N-1}}\leq 0
\end{align*}
and by Proposition \ref{central} it follows
\begin{align*}
\mathcal I_x(t)^{\frac{1}{N-1}}\geq \sigma_{\kappa_{\gamma_x}^{-}/(N-1)}^{\sscr{(1-t)}}(|\dot{\gamma}_x|) \mathcal I_x(0)^{\frac{1}{N-1}} + \sigma_{\kappa_{\gamma_x}^{+}/(N-1)}^{\sscr{(t)}}(|\dot{\gamma}_x|) \mathcal I_x(1)^{\frac{1}{N-1}}.
\end{align*}

{\bf 6.}
On the other hand, we can consider $\mathcal J_x(t)= \det A_t(x)e^{-V\circ\gamma_x(t)}$ and $L_x(t)= \mathcal J_x(t)/ \mathcal I_x(t)$. In \cite[Proof of Theorem 1.7 (c)]{stugeo2} it was shown that $L_x(t)$ is concave.
Following part (d) in the proof of Theorem 1.7 in \cite{stugeo2}  we obtain
\begin{align*}
\mathcal{J}_x(t)^{\frac{1}{N}}\geq \tau_{\kappa_{\gamma}^-,N}^{\sscr{(1-t)}}(|\dot{\gamma}_x|) \mathcal{J}_x(0)^{\frac{1}{N}} + \tau_{\kappa_{\gamma}^+,N}^{\sscr{(t)}}(|\dot{\gamma}_x|) \mathcal{J}_x(1)^{\frac{1}{N}}.
\end{align*}

{\bf 7.}
Thus together with the previous Monge-Ampere inequality and the measure theoretic change of variable formula we obtain
\begin{align*}
-S_N(\mu_t|\m)&= \int_M \left(\rho_t(x)\right)^{-\frac{1}{N}} d\mu_t(x) =\int_M \left(\rho_t(T_{t_0}^t(x))\right)^{-\frac{1}{N}} d\mu_{t_0}(x)\\
& \geq \int_M \det A_x(t)^{\frac{1}{N}} e^{-{\frac{1}{N}}V(T_{t_0}^t(x))}e^{\frac{1}{N}V(x)} \rho_{t_0}(x)^{1-\frac{1}{N}}d\m(x)\\
& = \int_M \mathcal{J}_x(t) e^{\frac{1}{N}V(x)} \rho_{t_0}(x)^{1-\frac{1}{N}}d\m(x)\\
& \geq \int_M \left[\tau_{\kappa_{\gamma}^-,N}^{\sscr{(1-t)}}(|\dot{\gamma}_x|) \rho_0(\gamma_x(0))^{-\frac{1}{N}} + \tau_{\kappa_{\gamma}^+,N}^{\sscr{(t)}}(|\dot{\gamma}_x|) \rho_1(\gamma_x(1))^{-\frac{1}{N}}\right] d\mu_{t_0}(x).
\end{align*}
Note that $\gamma_x(i)= e_i\circ e_{t_0}^{-1}(x)$, $i=0,1$. This proves one direction in the theorem.

{\bf 8.}
We skip  the proof of the  other direction since it will not play a role in the rest of the article.
\end{proof}

\begin{example}
Let $I\subset \mathbb{R}$ be an interval,
let $\VK:I\rightarrow \mathbb{R}$ be a lower semi-continuous function and let $u:I\rightarrow [0,\infty)$ be a non-negative solution
of
$
u''+ {\textstyle\frac{\VK}{N-1}} u=0
$ for $N>1$.
Then, the metric measure space
$
(I,|\cdot|_2,u^{N-1}d\mathcal{L}^1)
$
satisfies the curvature-dimension $CD(\VK,N)$.
\end{example}
\begin{remark}
The proof of the previous theorem actually shows more. If $(M,g_{M},\m)$ is a weighted Riemannian manifold, the condition $CD(\VK,N)$ holds iff for
each pair $\mu_0,\mu_1\in\mathcal{P}^2(\m_{\sX})$ with bounded support there exists a Wasserstein geodesic $\Pi$ with $(e_t)_{\star}\Pi=\mu_t$ such that
\begin{align}\label{something}
\varrho_t(\gamma_t)^{-\frac{1}{N}}\geq \tau_{\sk^-_{\gamma},\sN'}^{\sscr{(1-t)}}(|\dot{\gamma}|)\varrho_0(\gamma_0)^{-\frac{1}{N}}+\tau_{\sk^+_{\gamma},\sN}^{\sscr{(t)}}(|\dot{\gamma}|)\varrho_1(\gamma_1)^{-\frac{1}{N}}.
\end{align}
for all $t\in[0,1]$ and $\Pi$-a.e. $\gamma\in\mathcal{G}(X)$ where $\varrho_t\m_{\sX}=\mu_t$.
\end{remark}
\subsection{The measure contraction property}
\begin{definition}[\cite{ohtmea, stugeo2}]
We say a mm space $X$ satisfies the measure contraction property $MCP(K,N)$ for $K\in \R$ and $N\in (1,\infty)$ if for every $x_0\in X$ and
 $A\subset X$ ($A\subset B_{\pi_{K/(N-1)}}(x_0)$) with $\m(A)\in (0,\infty)$ there exists an optimal dynamical plan $\Pi$ such that $(e_0)_{\#}\Pi=\delta_{x_0}$, $(e_1)_{\#}\Pi=\m_A$ and 
\begin{align*}
\m \geq (e_t)_{\#}\left( \tau_{K,N}^{(t)}(|\dot{\gamma}|)^{N-1} \m(A) \Pi\right).
\end{align*}
\end{definition}
Ohta shows the following in \cite{ohtmea}.
\begin{theorem}
Let $X$ be a mm space that satisfies the $MCP(K,N)$ for $K>0$ and $N>1$. Then $\diam \supp\m_X\leq \pi_{K/(N-1)}$.
\end{theorem}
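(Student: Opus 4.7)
The plan is a contradiction argument exploiting that the modified distortion coefficient $\tau_{K,N}^{(t)}(\theta)$ blows up as $\theta \nearrow \pi_{K/(N-1)}$, combined with a Bishop--Gromov-type volume comparison implied by $MCP(K,N)$.

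As a first step I would derive from $MCP(K,N)$ the volume comparison
\[
R\mapsto \frac{\m(B_R(x))}{V_{K,N}(R)}\ \text{is non-increasing on}\ (0,\pi_{K/(N-1)}],\qquad V_{K,N}(R):=\int_0^R \sin_{K/(N-1)}^{N-1}(s)\,ds.
\]
This is obtained by applying the $MCP$ inequality to the annulus $A=B_R(x)\setminus B_r(x)$ and evaluating at $t=r/R$, so that the factors $\tau_{K,N}^{(t)}(|\dot\gamma|)^{N-1}$ integrate against $\Pi$ to yield exactly the sphere-model ratio $V_{K,N}(r)/V_{K,N}(R)$.

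Next I would argue by contradiction: assume there exist $x_0,y\in\supp\m$ with $d(x_0,y)>\pi_{K/(N-1)}$. Using that $(\supp\m,d)$ is a length space under $MCP(K,N)$, I would pick, for each small $\eta>0$, a point $z_\eta\in\supp\m$ on a near-geodesic from $x_0$ to $y$ with $d(x_0,z_\eta)=\pi_{K/(N-1)}-\eta$, and apply $MCP(K,N)$ with $A_\eta:=B_{\eta/2}(z_\eta)\subset B_{\pi_{K/(N-1)}}(x_0)$. This yields a dynamical plan $\Pi_\eta$ with $(e_0)_\#\Pi_\eta=\delta_{x_0}$, $(e_1)_\#\Pi_\eta=\m_{A_\eta}$, and
\[
\m \,\geq\, (e_t)_\#\!\bigl(\tau_{K,N}^{(t)}(|\dot\gamma|)^{N-1}\,\m(A_\eta)\,\Pi_\eta\bigr)\qquad \forall\, t\in[0,1].
\]
For $\Pi_\eta$-a.e.\ $\gamma$ one has $|\dot\gamma|\in(\pi_{K/(N-1)}-3\eta/2,\pi_{K/(N-1)}-\eta/2)$, so $\sin_{K/(N-1)}(|\dot\gamma|)\to 0$ and hence $\tau_{K,N}^{(t)}(|\dot\gamma|)\to\infty$ for every $t\in(0,1)$. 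The Bishop--Gromov estimate of the first step provides a lower bound on $\m(A_\eta)$ that exploits the extra mass sitting just past the critical radius (which exists precisely because $d(x_0,y)>\pi_{K/(N-1)}$). Testing the $MCP$ inequality on a fixed bounded set $U$ that contains the interpolated points $e_t(\gamma)$ for all small $\eta$, one concludes $\m(U)=\infty$, contradicting local finiteness of $\m$.

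The main obstacle is the delicate scaling in the last step: $\m(A_\eta)\to 0$ while $\tau_{K,N}^{(t)}(|\dot\gamma|)^{N-1}\to\infty$, and on the round sphere these rates cancel exactly, with $MCP$ holding with equality. The strict inequality $d(x_0,y)>\pi_{K/(N-1)}$ is what tips the balance, and quantifying this extra mass via the Bishop--Gromov estimate of the first step (applied around $z_\eta$) is the heart of the proof.
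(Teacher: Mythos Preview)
The paper does not prove this statement; it is quoted from Ohta \cite{ohtmea} without proof, so there is no ``paper's proof'' to compare against. What matters is whether your argument stands on its own.

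There is a genuine gap in your scaling step. Write $\pi:=\pi_{K/(N-1)}$. The Bishop--Gromov inequality centred at $z_\eta$ gives at best
\[
\m(A_\eta)=\m\bigl(B_{\eta/2}(z_\eta)\bigr)\ \ge\ \frac{V_{K,N}(\eta/2)}{V_{K,N}(R)}\,\m\bigl(B_{R}(z_\eta)\bigr),
\]
and the ``extra mass'' around $y$ only enters through the factor $\m(B_R(z_\eta))$ for some fixed $R\sim\delta$. This contributes a \emph{positive constant}, not a better rate: you still get $\m(A_\eta)\gtrsim c\,\eta^{N}$. On the other hand, for $|\dot\gamma|\in(\pi-3\eta/2,\pi-\eta/2)$ one has $\sin_{K/(N-1)}(|\dot\gamma|)\sim C\eta$, hence $\tau_{K,N}^{(t)}(|\dot\gamma|)^{N-1}\sim C_t\,\eta^{-(N-1)}$. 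The product is therefore
\[
\tau_{K,N}^{(t)}(|\dot\gamma|)^{N-1}\,\m(A_\eta)\ \sim\ \eta^{-(N-1)}\cdot\eta^{N}\ =\ \eta\ \longrightarrow\ 0,
\]
so testing on a fixed bounded $U$ gives $\m(U)\ge c\,\eta\to 0$, which is no contradiction. Your own remark that ``on the round sphere these rates cancel exactly'' is precisely the obstruction: the presence of $y$ at distance $>\pi$ does not improve the $\eta^N$ lower bound on $\m(A_\eta)$, because $y$ sits at a \emph{fixed} distance $\sim\delta$ from $z_\eta$ and Bishop--Gromov only converts that into a constant prefactor.

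The argument Ohta actually uses is more direct. In his formulation of $MCP(K,N)$ the inequality is required for \emph{all} measurable $A$ with $\m(A)\in(0,\infty)$, with the convention $\tau_{K,N}^{(t)}(\theta)=\infty$ for $\theta\ge\pi$. If $\m\bigl(X\setminus\overline{B_\pi(x_0)}\bigr)>0$, take $A$ to be a bounded subset of positive measure there; every transport geodesic has $|\dot\gamma|>\pi$, the right-hand side of the $MCP$ inequality is $+\infty$ on a set of positive $(e_t)_\#\Pi$-measure, and local finiteness of $\m$ is violated. The parenthetical restriction $A\subset B_\pi(x_0)$ in the paper's definition is a normalisation (the nontrivial content lies there), not a weakening that forces your limiting argument.
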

Let $X$ satisfy the $MCP(K,N)$ for $K>0$ and $N>1$. We say $y\in X$ is opposite to $x\in X$ if $d_X(x,y)=\pi_{\sK/(\sN-1)}$.
\begin{theorem}[Ohta, \cite{ohtmea}]
If $X$ satisfies the $MCP(K,N)$ for $K>0$ and $N>1$, then each point $x\in X$ has at most one opposite point.
\end{theorem}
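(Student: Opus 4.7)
I would argue by contradiction. Suppose $y_1\neq y_2$ are two opposite points of $x$ and set $L:=\pi_{K/(N-1)}$. Fix $\epsilon>0$ small enough that the balls $A_i:=B_\epsilon(y_i)$ are disjoint (so $\epsilon<\tfrac12 d(y_1,y_2)$); the Bishop-Gromov monotonicity that $MCP$ implies, applied at the base point $y_i$ together with $\diam\supp\m\leq L$, ensures $\m(A_i)>0$. The central observation is that, with $t\in(0,1)$ fixed, the coefficient $\sigma^{(t)}_{K/(N-1)}(\theta)=\frs_{K/(N-1)}(t\theta)/\frs_{K/(N-1)}(\theta)$ diverges as $\theta\to L$ because its denominator vanishes, and therefore the modified distortion coefficient $\tau_{K,N}^{(t)}(L-\epsilon)^{N-1}\to\infty$ as $\epsilon\downarrow 0$.

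Next I apply $MCP(K,N)$ with $x_0=x$ and $A=A_i$ to obtain an optimal dynamical plan $\Pi_i$ with $(e_0)_\#\Pi_i=\delta_x$ and $(e_1)_\#\Pi_i=\m_{A_i}$. For every $y\in A_i$ the triangle inequality combined with $d(x,y_i)=L$ yields $d(x,y)\in[L-\epsilon,L]$, so every $\gamma\in\supp\Pi_i$ has $|\dot\gamma|\geq L-\epsilon$. The $MCP$ inequality, tested against a Borel set $E$, therefore reads
\[
\m(E)\;\geq\;\tau_{K,N}^{(t)}(L-\epsilon)^{N-1}\,\m(A_i)\,(e_t)_\#\Pi_i(E).
\]

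For $t$ close enough to $1$ the midpoint $\gamma(t)$ of $\gamma\in\supp\Pi_i$ lies in $B_{(1-t)L+\epsilon}(y_i)$, and by choosing $(1-t)L+\epsilon<\tfrac12 d(y_1,y_2)$ the supports of $(e_t)_\#\Pi_1$ and $(e_t)_\#\Pi_2$ become disjoint. Testing the two $MCP$ inequalities against $E_i:=\supp(e_t)_\#\Pi_i$ and summing yields
\[
\m(X)\;\geq\;\m(E_1\cup E_2)\;\geq\;\tau_{K,N}^{(t)}(L-\epsilon)^{N-1}\bigl(\m(A_1)+\m(A_2)\bigr).
\]

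The hard part, and the main obstacle, is to extract a genuine contradiction from this: a direct comparison between the blow-up $\tau_{K,N}^{(t)}(L-\epsilon)^{N-1}\asymp\epsilon^{-(N-1)^2/N}$ and the Bishop-Gromov lower bound $\m(A_i)\gtrsim\epsilon^N$ gives a product $\asymp\epsilon^{(2N-1)/N}\to 0$, which is not contradictory. The resolution must use the disjoint-support structure more quantitatively; I expect the right refinement is either to integrate the $MCP$ inequality in $t$ and exploit the full $1$-parameter family of disjoint midpoint tubes simultaneously, or to use the sharp asymptotics of $s_{K/(N-1)}(r)^{N-1}$ near $r=L$ to upgrade the Bishop-Gromov bound at an opposite point (where the model density vanishes) into a stronger comparison that is incompatible with two independent opposite caps. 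In any formulation the contradiction is driven by the impossibility, forced by the divergence of the distortion coefficient at $L$, of supporting two independent directions of maximal distance from $x$.
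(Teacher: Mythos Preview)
The paper does not prove this theorem; it is stated without proof and attributed to Ohta. So there is no paper proof to compare against, and the question is whether your argument stands on its own.

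It does not, and you say so yourself: after deriving $\m(X)\geq \tau_{K,N}^{(t)}(L-\epsilon)^{N-1}(\m(A_1)+\m(A_2))$ you correctly compute that the blow-up of the coefficient is dominated by the decay $\m(A_i)\gtrsim\epsilon^N$, so no contradiction results. Your final paragraph only speculates about possible fixes.

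The missing idea is that one should not bound $\m(E_1\cup E_2)$ crudely by $\m(X)$; one should bound it by the measure of the \emph{annulus} near the diameter. In fact Ohta's argument bypasses the $t$-parameter and the $\tau$-coefficients entirely. Both $B_\epsilon(y_1)$ and $B_\epsilon(y_2)$ lie in $X\setminus B_{L-\epsilon}(x)$. Bishop--Gromov at $x$ (a consequence of $MCP$) together with the symmetry $\frs_{K/(N-1)}(L-s)=\frs_{K/(N-1)}(s)$ gives
\[
\m\bigl(X\setminus B_{L-\epsilon}(x)\bigr)\;\leq\;\m(X)\,\frac{v_{K,N}(L)-v_{K,N}(L-\epsilon)}{v_{K,N}(L)}\;=\;\m(X)\,\frac{v_{K,N}(\epsilon)}{v_{K,N}(L)},
\]
while Bishop--Gromov at each $y_i$ gives $\m(B_\epsilon(y_i))\geq \m(X)\,v_{K,N}(\epsilon)/v_{K,N}(L)$. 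Summing over the two disjoint balls yields $2\,v_{K,N}(\epsilon)\leq v_{K,N}(\epsilon)$, a contradiction. The sharp fact you were missing is that the model annulus volume at the diameter equals the model ball volume at the same radius \emph{exactly}; no asymptotics are needed.
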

\subsection{{Riemannian curvature-dimension condition}}
Let $(X,d_{\sX})$ be a metric space. For a function $u:X\rightarrow \mathbb{R}$ the local slope is 
\begin{align*}
\lip u(x)=\limsup_{y\rightarrow x}\frac{|u(x)-u(y)|}{d_{\sX}(x,y)}\  \ \mbox{if $x\in X$ is not isolated},
\end{align*}
and $
\infty$ otherwise. The Cheeger energy is defined in \cite{agslipschitz, agsheat} via
\begin{align*}
u\in L^2(\m_{\sX})\mapsto  \ChX(u)=\frac{1}{2}\inf\left\{\liminf_{h\rightarrow \infty}\int_{\sX}\left(\lip u_h\right)^2d\m_{\sX}: \left\|u_h-u\right\|_{L^2(\m_{\sX})}\rightarrow 0\right\}.
\end{align*}
Then the $L^2$-Sobolev space is $D(\ChX)=\left\{u\in L^2(\m_{\sX}):\ChX(u)<\infty\right\}$, and 
we say that a mm space $\mms$ is \textit{infinetimally Hilbertian} if the Cheeger enegery is quadratic. 
\begin{definition}[\cite{agsriemannian, giglistructure, erbarkuwadasturm, agmr, amsnonlinear, cavallettimilman}]
Let $K\in \mathbb{R}$ and $N\geq 1$. A metric measure space $\mms$ satisfies the \textit{
Riemannian curvature-dimension condition} $RCD(K,N)$ if $\mms$ is infinitesimally Hilbertian and satisfies the condition $CD(K,N)$.
\end{definition}
We say that an mm space $\mms$ is \textit{essentially nonbranching} if for any optimal dynamical coupling $\Pi$ there exists $A\subset \mathcal{G}(X)$ such that $\Pi(A)=1$ and for all $\gamma, \gamma'\in A$ we have that
$
\gamma(t)=\gamma'(t) \mbox{ for all }t\in [0,\epsilon] \mbox{ and for some }\epsilon>0\ \mbox{implies} \ \gamma=\gamma'.
$

In \cite{rajalasturm} Sturm and Rajala showed that $RCD(K,N)$-spaces are essentially non-branching.
\noindent

\subsection{Integral Ricci curvature quantities}\label{subsec:ice}
Let $X$ be a mm space, let $\kappa:X\rightarrow \mathbb{R}$ be a continuous function such that $X$ satisfies $CD(\kappa,N)$. Consider
\begin{align*}
\left\|(\kappa-K)_-1_A\right\|_{L^p(\m_{\sX})}=\left(\int_A(\kappa-K)_{-}^pd{\m}_{\sX}\right)^{\frac{1}{p}}\in [0,\infty]. %
\end{align*}
where $(\kappa-K)_-=-\min(\kappa-K,0)$. If $\m_{\sX}(A)<\infty$, we set
\begin{align*}
\left\|(\kappa-K)_-\right\|_{L^p(\bar \m_A)}=\left(\int (\kappa-K)_{-}^pd\bar{\m}_{A}\right)^{\frac{1}{p}}\in [0,\infty].
\end{align*} 
In particular, if $\m_{\sX}(X)<\infty$,  for  the isomorphism class $[X]$ we define  $$k_{[\sX]}(\kappa,p,K)=(\diam X)^2\left\|(\kappa-K)_-\right\|_{L^p(\bar \m_{\sX})}.$$
We note that $k_{[X]}(\kappa,p,K)$ behaves naturally under scaling and  transformations by mm space isomorphisms. If $\psi: X\rightarrow X'$ is an mm space isomorphism, by Proposition \ref{suddenlyimportant} we have that $X'$ satisfies $CD(\kappa',N)$ for  $\kappa'=\kappa\circ \psi$ and we compute
\begin{align*}
\frac{\kappa_{[X]}(\kappa,p,K)^p}{(\diam X)^{2p}}&=\int (\kappa-K)_{-}^pd\bar{\m}_{X}=\int (\kappa\circ \psi -K)_{-}^p d\psi_{\#}\bar{\m}_X\\
&=\int (\kappa' -K)_{-}^pd\bar{\m}_{\sX'}=\frac{ \kappa_{[X']}(\kappa', p, K)^p}{(\diam X')^{2p}}.
\end{align*}
If $rX=(X, r d_{\sX}, \m_{\sX})$ is a rescaling of $X$ by $r\in \R$, one can check that $$k_{[X]}(\kappa,p,K)=k_{[rX]}({r}^{-1}\kappa, p, {r}^{-1}K).$$
Let $o\in X$.
Recall that for $[X,o]$  we pick the represenative such that $\m_X(B_1(o))=1$. We define
$${k}_{[\sX,o]}(\kappa,p,K,R)= R^2 \left\|(\kappa-K)_-1_{B_R(o)}\right\|_{L^p({\m_X})}.$$
We can check that $k_{[\sX,o]}(\kappa,p,K,R)$ again behaves naturally under   scaling and under pmm space isomorphisms.

\section{Precompactness}
\noindent
Let $\mms$ be a metric measure space that is essentially nonbranching and satisfies $CD(\kappa,N)$ for some admissible function $\kappa:X\rightarrow \mathbb{R}$. 
\medskip
\paragraph{\textbf{Spherical disintegration}} We 
fix a point $x_0\in X$, and 
consider the disjoint decomposition $X=\bigcup_{r>0}X_r$ where $X_r=\partial B_r(x_0)$.
According to this decomposition the measure $\m_{\sX}$ can be disintegrated as follows
\begin{align*}
\m_{\sX}=\int \bar{\m}_r\hspace{1pt}\sph(dr)
\end{align*}
where $\bar{\m}_r\in\mathcal{P}^2(X)$ and $\sph$ is a measure on $(0,\infty)$. The condition $CD(\kappa,N)$ implies that $r\mapsto\m_{\sX}(\bar{B}_r(x_0))$ is weakly differentiable. 
Hence, the measure $\sph$ is $\mathcal{L}^1$-absolutely continuous (for instance, see the 
proof of Theorem 5.3 in \cite{ketterer5}).
With slight abuse of notation we write $\sph(dr)=\sph(r)\mathcal{L}^1(dr)$ and set $\m_r=\sph(r)\bar{\m}_r$. Note that
\begin{align*}
 \m_{\sX}(\bar{B}_R(x_0))=\int_0^R\!\!\m_r(\bar{B}_R(x_0)) dr 
 =:v(R) \ \ \&\
 \ \ \frac{d}{dr}\m_{\sX}(\bar{B}_r(x_0))= \m_{r}(\bar{B}_r(x_0))=:s(r).
\end{align*}
\begin{example} Let $I_{\sK/(N-1)}$ be the model space $( [0,\pi_{\frac{K}{N-1}}], \sin_{\frac{K}{N-1}}r dr)$. In this situation we choose $x_0=0$, and we have
\begin{align*}
 \m_{\sK,\sN}(\bar{B}_R(x_0))=\int_0^R \sin_{\sK/(\sN-1)}^{\sN-1}r dr \ \ \ \&\ \ \ \frac{d}{dr}\m_{\sX}(\bar{B}_r(x_0))= \sin_{\sK/(\sN-1)}^{\sN-1}r.
\end{align*}
We set $v_{\sK,\sN}(r)=\m_{\sK,\sN}(\bar{B}_r(0))$ and $s_{\sK,\sN-1}(r)=\sin_{\sK/(\sN-1)}^{\sN-1}r$.
\end{example}
\begin{proposition}\label{volumegrowth}
Let $X$ be a mm space that is essentially nonbranching and satifies the condition $CD(\kappa,N)$. Let $x_0\in \supp\m_X$ and assume that $$\int_{B_D(x_0)} (\kappa -K)^p_{-} d\m<\infty$$ for some $p>\frac{N}{2}$, $K\leq 0$ and $D>0$.

Then, there exists a constant $C=C(K,N,p,D)>0$ such that 
\begin{align*}
\frac{d}{dR} \log \left(\frac{\m_{\sX}(B_{R}(x_0))}{\int_0^{R}w_{\sK,\sN-1}(r)dr}\right)\leq  {C}\left( \frac{R}{\m_X(B_R(x_0))}\int_{B_R(x_0)} (\kappa -K)^p_{-} d\m\right)^{\frac{1}{2p-1}}
\end{align*}
for every $R\in (0,D]$.
\end{proposition}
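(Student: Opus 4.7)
My plan is to adapt the Petersen--Wei integral Ricci comparison argument to the nonsmooth setting, using that essentially nonbranching $CD(\kappa,N)$ spaces enjoy a measure contraction property along the rays issuing from $x_0$. The characteristic exponent $\frac{1}{2p-1}$ will arise from a self-improvement step applied to a Riccati-type inequality.

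First I would use the spherical disintegration $\m_X = \int \bar{\m}_r\,\sph(r)\,dr$ introduced just above the proposition to rewrite the logarithmic derivative on the left-hand side as
\begin{equation*}
\frac{\sph(R)}{V(R)} - \frac{s_{K,N-1}(R)}{V_{K,N}(R)},\qquad V(R):=\m_X(B_R(x_0)),\ \ V_{K,N}(R):=\int_0^R s_{K,N-1}(r)\,dr,
\end{equation*}
reducing the problem to a pointwise comparison of the surface-area densities. Essential nonbranching together with $CD(\kappa,N)$ (used in the measure contraction formulation, as in step 5 of the proof of Theorem \ref{smoothcase} and the distributional inequality of Proposition \ref{central}) then shows that for $\sph$-a.e.\ unit-speed ray $\gamma$ from $x_0$ the radial Jacobian $J_\gamma(r)$ satisfies
\begin{equation*}
\bigl(J_\gamma(r)^{1/(N-1)}\bigr)'' + \tfrac{\kappa(\gamma(r))}{N-1}\, J_\gamma(r)^{1/(N-1)} \leq 0
\end{equation*}
distributionally on $(0,D]$, and that $\sph(r)$ is the integral of $J_\gamma(r)$ across the family of rays.

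Setting $m_\gamma(r) := (N-1)(J_\gamma^{1/(N-1)})'/J_\gamma^{1/(N-1)}$ and letting $m_{K,N}$ denote the model analogue (for which equality holds with $\kappa = K$), the Jacobi inequality is equivalent to the Riccati-type bound
\begin{equation*}
(m_\gamma - m_{K,N})'(r) + \tfrac{1}{N-1}\bigl(m_\gamma^2 - m_{K,N}^2\bigr)(r) \leq (\kappa\circ\gamma - K)_-(r).
\end{equation*}
Multiplying by $(m_\gamma - m_{K,N})_+^{2p-2}$, integrating on $[0,R]$, and using the quadratic gain term $m_\gamma^2 - m_{K,N}^2 = (m_\gamma - m_{K,N})(m_\gamma + m_{K,N})$ to absorb error terms, combined with H\"older's inequality with exponents $(p,p/(p-1))$, gives the pointwise bound
\begin{equation*}
(m_\gamma - m_{K,N})_+(R) \leq C'(K,N,p,D)\left(\int_0^R (\kappa\circ\gamma - K)_-^{p}(r)\,dr\right)^{\frac{1}{2p-1}},
\end{equation*}
which is the step that produces the exponent $\frac{1}{2p-1}$. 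Averaging over $\gamma$ against the spherical disintegration and applying H\"older once more to pass from the inner $\bar{\m}_r$-integrals to the $L^p(\m_X|_{B_R(x_0)})$-norm of $(\kappa-K)_-$ converts this into the stated bound, with the factor $R/\m_X(B_R(x_0))$ emerging in the recombination step.

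\textbf{Main obstacle.} The most delicate step is the rigorous derivation of the Riccati inequality and the self-improvement step in the nonsmooth setting, where neither Jacobi fields nor the radial mean curvature have a classical meaning. The argument must rest on the distributional Jacobi inequality from Proposition \ref{central} applied along $\sph$-a.e.\ radial ray, and on essential nonbranching to guarantee a clean radial decomposition of $\m_X$ on $B_D(x_0)$ (so that the radial rays issuing from $x_0$ are well-defined $\m_X$-a.e.\ and do not interact pathologically). Tracking the explicit dependence of the constant $C(K,N,p,D)$ through the H\"older/absorption calculation, in particular through the $\cosh$-type bound on the model quantity $m_{K,N}$ on $[0,D]$ when $K\leq 0$, is a secondary technicality.
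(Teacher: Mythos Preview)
Your overall strategy---Petersen--Wei self-improvement along radial rays, then averaging---is the right one and is essentially what the paper does. But two steps in your outline do not close as written, and the paper organizes them differently.

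\textbf{The missing Jacobian weight.} Your pointwise bound
\[
(m_\gamma - m_{K,N})_+(R)\le C'\Bigl(\int_0^R(\kappa\circ\gamma-K)_-^p\,dr\Bigr)^{1/(2p-1)}
\]
is the \emph{unweighted} Petersen--Wei estimate: there is no Jacobian factor $J_\gamma(r)$ inside the integral. When you ``average over $\gamma$'' the right-hand side becomes $\int\!\!\int_0^R(\kappa\circ\gamma-K)_-^p\,dr\,d\nu(\gamma)$, which is \emph{not} $\int_{B_R}(\kappa-K)_-^p\,d\m$ because $d\m=J_\gamma(r)\,d\nu(\gamma)\,dr$. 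The paper instead uses the weighted form (Proposition~\ref{petersenweiaubry}),
\[
\psi_{K,N-1}(r)^{2p-1}\,\omega(r)\le C'\int_0^r(\kappa-K)_-^p\,\omega\,ds,
\]
with $\omega=h_r^{-1}$ the inverse density along the Wasserstein geodesic from $\delta_{x_0}$ to $\bar\m_R$. The weight is precisely what lets you reconstruct $d\m_r$ after integrating against $\Pi$: $\int(\kappa-K)_-^p\,\omega\,d\Pi\le\int(\kappa-K)_-^p\,d\m_r$.

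\textbf{The recombination step.} Even with the right pointwise bound on $\psi_\gamma(R)$, there is no direct identity linking an average of $\psi_\gamma(R)$ to $\frac{d}{dR}\log\bigl(V(R)/V_{K,N}(R)\bigr)$. The paper does not attempt this. Instead it integrates the differential inequality $\frac{d}{dr}(\omega/\omega_{K,N-1})\le\psi\cdot\omega/\omega_{K,N-1}$ from $r_0$ to $R$, applies H\"older with exponent $2p-1$, integrates over $\Pi$ (using $\int\omega(R)\,d\Pi=\m_R(X)$ and $\int\omega(r)\,d\Pi\le\m_r(X)$ for $r<R$), and then integrates over $r_0\in[0,R]$. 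This double integration is exactly what turns the surface-level comparison into $[\int_0^R\omega_{K,N-1}]^2\,\frac{d}{dR}\frac{\m(B_R)}{\int_0^R\omega_{K,N-1}}$ on the left-hand side; only then is the weighted Petersen--Wei--Aubry estimate applied. Your ``recombination step'' would have to reproduce this, and as stated it does not.

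\textbf{The ray decomposition.} In the nonsmooth setting the paper does not posit radial rays with Jacobians directly; it uses the Cavalletti--Sturm argument for the Wasserstein geodesic $\delta_{x_0}\to\bar\m_R$ to obtain absolute continuity of $\mu_r$ with respect to $\m_r$ and the distributional Jacobi inequality for $h_r^{-1/(N-1)}$ along $\Pi$-a.e.\ geodesic. This is the rigorous replacement for your ``radial Jacobian $J_\gamma$'' and is where essential nonbranching enters.
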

\begin{proof}\textbf{1.} Consider the sperical disintegration $\{\m_r\}_{r\in (0,\infty)}$ w.r.t. $x_0\in X$.
Let $R\in (0,\infty)$, and let $(\mu^{}_r)_{r\in[0,R]}$
be the constant speed $L^2$-Wasserstein geodesic that connects $\delta_{x_0}=\mu^{}_0$ and $\bar{\m}_R=\mu^{}_R$.
Let $\Pi^{}$ be the induced optimal dynamical plan. Then the $(e_t)_{\#}\Pi=:\mu_{tR}$ is supported on $\partial B_{tR}(x_0)$. 

Provided $X$ satisfies a curvature-dimension condition for $\kappa=const$ 
Cavalletti and Sturm prove that $\mu^{}_r$ is absolutely continuous w.r.t. $\m_{r}$ \cite[Lemma 3.2]{cavallettisturm}. It is easy to check that their argument also applies in our context.
We write $d\mu^{}_r=\hat{h}^{}_rd\m_{r}$.
We denote by $\Gamma^{}$ the support of $\Pi^{}$. $\Pi$-almost every $\gamma\in \Gamma$ has the length $R$ and we define $h_{r}:\Gamma\rightarrow \mathbb{R}$ with $h^{}_r(\gamma):=\hat{h}^{}_r(\gamma_r)$. 
Following again arguments of Cavalletti and Sturm \cite[Theorem 5.2]{cavallettisturm} one proves for $r_0,r_1\in (0,R)$
\begin{align}\label{a}
h^{}_{(1-t)r_0+tr_1}(\gamma)^{\frac{-1}{N-1}}\geq \sigma_{\kappa^-_{\sigma}/(N-1)}^{(1-t)}(|\dot{\varsigma}|)h^{}_{r_0}(\gamma)^{\frac{-1}{N-1}}+\sigma_{\kappa^+_{\sigma}/(N-1)}^{(t)}(|\dot{\varsigma}|)h^{}_{r_1}(\gamma)^{\frac{-1}{N-1}}
\end{align}
for $\Pi^{}$-a.e. $\gamma\in \mathcal{G}^{[0,R]}(X)$
where $\varsigma(t)=\gamma((1-t)r_0+tr_1)$.
\medskip

We set $r\in (0,R]\mapsto \omega(r):=h_{r}(\gamma)^{-1}$ and $g(r)=\log\omega(r)$. 
In particular, it follows $\omega(t)g'(t)=\omega'(t)$ and
(\ref{a}) is equivalent to 
\begin{align}\label{b}
g''+{\textstyle \frac{1}{N-1}}g^2+\kappa\circ \gamma\leq 0 \ \mbox{ on }\ (0,R]\ \mbox{ for }\ \Pi\mbox{-a.e. }\gamma.
\end{align}
In the following we omit the dependence on $\gamma\in \Gamma$.
We have equality in (\ref{a}) and (\ref{b}) if
$h_r(\gamma)^{-{1}}=:h_{r,\sK,\sN-1}(\gamma)^{-1}=s_{\sK,\sN-1}(r)$. In this case we write $g_{\sK,\sN-1}$ and $\omega_{\sK,\sN-1}$ for $g$ and $\omega$ respectively.  Then we compute
\begin{align}\label{ineq:important}
\frac{d}{dr}\frac{\omega(r)}{\omega_{\sK,\sN-1}(r)}&=\frac{\omega'(r)}{\omega_{\sK,\sN-1}(r)}-\frac{\omega_{\sK,\sN-1}'(r)\omega(r)}{\omega_{\sK,\sN-1}(r)^2}\nonumber\\
&= \Big[g'(r)-g'_{\sK,\sN-1}(r)\Big]\frac{\omega(r)}{\omega_{\sK,\sN-1}(r)}\leq \psi_{\sK,\sN-1}(r)\frac{\omega(r)}{\omega_{\sK,\sN-1}(r)}.
\end{align}
where $\psi_{\sK,\sN-1}(r):=\max\left\{0,g'(r)-g'_{\sK,\sN-1}(r)\right\}$. Note that $\psi$ depends also on $\kappa\circ\gamma$.
\smallskip\\
\textbf{2.}
Integration of (\ref{ineq:important}) w.r.t. $r$ from $r_0\in (0,R)$ to $R$ yields
\begin{align*}
 \frac{\omega(R)}{\omega_{\sK,\sN-1}(R)}- \frac{\omega(r_0)}{\omega_{\sK,\sN-1}(r_0)}\leq \int_{r_0}^{R} \psi_{\sK,\sN-1}(r)\frac{\omega(r)}{\omega_{\sK,\sN-1}(r)}dr.
\end{align*}
Since $K\leq 0$, we have that $r\in [0,\infty)\mapsto \omega_{\sK,\sN-1}(r)$ is monotone increasing (and positive). Hence, multiplication with $\omega_{\sK,\sN-1}(R)$ and $\omega_{\sK,\sN-1}(r_0)$ yields
\begin{align}\label{inequalitysome}
{\omega(R)}\omega_{\sK,\sN-1}(r_0)-{\omega(r_0)}\omega_{\sK,\sN-1}(R)
\leq  \omega_{\sK,\sN-1}(R)R\int_0^1\psi_{\sK,\sN-1}(\tau R){\omega(\tau R)} d\tau.
\end{align}
Recall that $$\int \omega(R)d\Pi=\m_R(X) \ \ \& \ \int\omega(\tau R)d\Pi=\int_{\left\{\hat h_{\tau R}>0\right\}}\hat h_{\tau R}(x)^{-1}d\mu_{\tau R}(x)\leq \m_{\tau R}(X).$$
Therefore, integration of \eqref{inequalitysome} w.r.t. $\Pi$ and application of Fubini's theorem yield
\begin{align*}
&\underbrace{\int {\omega(R)}d\Pi}_{=\m_R(X)}\omega_{\sK,\sN-1}(r_0)-\underbrace{\int {\omega(r_0)}d\Pi}_{\leq \m_{r_0}(X)}\omega_{\sK,\sN-1}(R)\\
&\hspace{2cm}\leq \omega_{\sK,\sN-1}(R) R\int\int_0^1\psi_{\sK,\sN-1}(\tau R){\omega(\tau R)} d\tau d\Pi.
\end{align*}
Oberserve
\begin{align*}
&R\int\int_0^1\psi_{\sK,\sN-1}(\tau R){\omega(\tau R)} d\tau d\Pi\\
&\leq \left(R \int\int_0^1\psi_{\sK,\sN-1}^{2p-1}(\tau R){\omega(\tau R)} d\tau d\Pi \right)^{\frac{1}{2p-1}} \left(R\int_0^1 \int \omega(\tau R)  d\Pi d\tau \right)^{1-\frac{1}{2p-1}}.
\end{align*}
where  $R\int_0^1 \int \omega(\tau R)  d\Pi d\tau \leq \int_0^R \m_r dr= \m(B_R(x_0))$. Hence
\begin{align*}
& \m_R(X)\omega_{\sK,\sN-1}(r_0)-\m_{r_0}(X)\omega_{\sK,\sN-1}(R)
\\
&
\hspace{1cm}\leq\omega_{\sK,\sN-1}(R)\m(B_{R}(x_0))^{1-\frac{1}{2p-1}}\left[\int_0^{R}\int \psi_{\sK,\sN-1}^{2p-1}(r)\omega(r) d\Pi dr\right]^{\frac{1}{2p-1}}.
\end{align*}
Another integration w.r.t. $r_0$ from $0$ to $R$ yields
\begin{align*}
&
\m_R(X)\int_0^R\omega_{\sK,\sN-1}(r_0)dr_0-\m_{\sX}(B_R(x_0)) \omega_{\sK,\sN-1}(R)\\
&\leq R\omega_{\sK,\sN-1}(R)\m(B_{R}(x_0))\left(\frac{1}{\m_{\sX}(B_R(x_0))}\int\int_0^{R}\psi_{\sK,\sN-1}^{2p-1}(r)\omega(r)drd\Pi\right)^{\frac{1}{2p-1}}. 
\end{align*}
The left hand side actually is $\left[\int_0^{R}w_{\sK,\sN-1}(r)dr\right]^2\frac{d}{dR}\frac{\m_{\sX}(B_{R}(x_0))}{\int_0^{R}w_{\sK,\sN-1}(r)dr}.$ Hence 
\begin{align*}
&\left(\frac{\m_{\sX}(B_{R}(x_0))}{{\scriptstyle\int_0^{R}w_{\sK,\sN-1}(r)dr}}\right)^{-1}\frac{d}{dR}\frac{\m_{\sX}(B_{R}(x_0))}{{\scriptstyle\int_0^{R}w_{\sK,\sN-1}(r)dr}}\\
&\  \leq \frac{R\omega_{\sK,\sN-1}(R)}{{\scriptstyle \int_0^{R}w_{\sK,\sN-1}(r)dr}}\left(\frac{1}{\m_{\sX}(B_R(x_0))}\int\int_0^{R}\psi_{\sK,\sN-1}^{2p-1}(r)\omega(r)drd\Pi\right)^{\frac{1}{2p-1}}
\end{align*} 
We estimate $\frac{R\omega_{\sK,\sN-1}(R)}{\int_0^R\omega_{\sK,\sN-1}(r)dr}$ by $\max_{R\in [0,D]}\frac{R\omega_{\sK,\sN-1}(R)}{\int_0^R\omega_{\sK,\sN-1}(r)dr}=:\Xi(K,N,D)$. 
\medskip\\
\textbf{3.} Recalling Proposition \ref{petersenweiaubry} below in the next section we obtain
\begin{align*}
&\frac{d}{dR}\log \left(\frac{\m_{\sX}(B_{R}(x_0))}{\int_0^{R}w_{\sK,\sN-1}(r)dr}\right)\\
&\leq C\left(\frac{1}{\m_X(B_R(x_0))}\int_0^R\int  \int_0^1R(\kappa(\gamma_{t R}-K)_{-}^p\omega(t R) dt d\Pi  dr\right)^{\frac{1}{2p-1}}
\end{align*}
where $C=C(K,N,p,D)=\Xi(K,N,D) C(p,N)^{\frac{1}{2p-1}}$.
Recall that $\omega(\tau)=h_{\tau}(\gamma)^{-1}$ and therefore
\begin{align*}
\int (\kappa(\gamma_{\tau R})-K)_{-}^p\omega(\tau R)d\Pi=
\int (\kappa-K)_{-}^p h_{\tau}^{-1} d(e_{\tau R})_{\star}\Pi(x)\leq
\int (\kappa-K)_{-}^p d\m_{\tau R}.
\end{align*}
Hence
\begin{align*}
\frac{d}{dR} \log \left(\frac{\m_{\sX}(B_{R}(x_0))}{\int_0^{R}w_{\sK,\sN-1}(r)dr}\right)\leq  {C}\left( \frac{R}{\m_X(B_R(x_0))}\int_{B_R(x_0)} (\kappa -K)^p_{-} d\m\right)^{\frac{1}{2p-1}}
\end{align*}
\end{proof}
\color{black}
\begin{remark}
In the case of $CD(K,N)$ where $K=const$ we get that $$\frac{d}{dr}\frac{\m_{\sX}(B_r(x_0))}{\int_0^r\sin_{\sK/(\sN-1)}^{\sN-1}\tau d\tau}\leq 0.$$ This is the classical Bishop-Gromov volume comparison. 
\end{remark}
\begin{corollary}
Let $C$ be a mm space that is essentially nonbranching and satifies the condition $CD(\kappa,N)$.
Let $R>0$, $r\in (0,R)$ and $x_0\in \supp\m_{\sX}$ such that  $\left\| (\kappa- K)_{-}1_{B_R(x_0)}\right\|_{L^p(\m_X)}<\infty$ for some $p>\frac{N}{2}$, and $K\leq 0$. Then
\begin{align*}
\frac{\m_{\sX}(B_{R}(x_0))}{v_{K,N}(R)}\leq \frac{\m_{\sX}(B_{r}(x_0))}{v_{K,N}(r)} e^{ C(K,N,p,D) \left( \frac{R^{2p}}{\m_X(B_R(x_0))}\int_{B_R(x_0)} (\kappa -K)^p_{-} d\m\right)^{\frac{1}{2p-1}}}.
\end{align*}
\end{corollary}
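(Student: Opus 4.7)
I would integrate the differential inequality from Proposition~\ref{volumegrowth} over $[r,R]$ and then bound the resulting integral by the stated expression. Set $F(\rho) := \m_X(B_\rho(x_0))/v_{K,N}(\rho)$ and $\Lambda(\rho) := \int_{B_\rho(x_0)}(\kappa-K)_-^p\,d\m$, and write $\Psi(\rho)$ for the expression on the right-hand side of
\begin{align*}
\frac{d}{d\rho}\log F(\rho) \leq C_1(K,N,p,D)\left(\frac{\rho\,\Lambda(\rho)}{\m_X(B_\rho(x_0))}\right)^{\frac{1}{2p-1}},
\end{align*}
the inequality supplied by Proposition~\ref{volumegrowth} for a.e.\ $\rho\in(0,D]$. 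Integration from $r$ to $R$ immediately gives $\log F(R) - \log F(r) \leq C_1 \int_r^R \Psi(\rho)\,d\rho$.

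The remaining task is to estimate $\int_r^R \Psi(\rho)\,d\rho$ by a constant multiple of $\bigl(R^{2p}\Lambda(R)/\m_X(B_R(x_0))\bigr)^{1/(2p-1)}$. Monotonicity of $\Lambda$ allows the replacement of $\Lambda(\rho)^{1/(2p-1)}$ by $\Lambda(R)^{1/(2p-1)}$, and the elementary bound $\int_r^R \rho^{1/(2p-1)}\,d\rho \leq \tfrac{2p-1}{2p}R^{2p/(2p-1)}$ accounts for the factor $R^{2p/(2p-1)}$ (using the defining identity $(2p-1)\cdot \tfrac{1}{2p-1}=1$ to see that $R^{1+1/(2p-1)} = R^{2p/(2p-1)}$). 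For the remaining factor $\m_X(B_\rho(x_0))^{-1/(2p-1)}$ I would use a Bishop--Gromov-type comparison to replace it by a constant multiple (depending only on $K,N,p,D$) of $\m_X(B_R(x_0))^{-1/(2p-1)}$ on $[r,R]$. Exponentiating the resulting estimate then gives the claimed bound.

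The principal obstacle is this last replacement of $\m_X(B_\rho(x_0))$ by $\m_X(B_R(x_0))$, which amounts to an \emph{almost} Bishop--Gromov inequality in the integral curvature regime. In the pure $CD(K,N)$ setting this would be standard Bishop--Gromov, but here the corresponding error term must be absorbable into the constant $C(K,N,p,D)$ uniformly over $\rho\in(0,D]$; this can itself be obtained by iterating Proposition~\ref{volumegrowth} on subintervals of $[r,R]$ and telescoping the error, which is controlled by the integral curvature quantity appearing in the hypothesis. Apart from this comparison step, the rest of the argument is a routine application of integration together with the monotonicity of $\rho \mapsto \Lambda(\rho)$ and of $\rho \mapsto \m_X(B_\rho(x_0))$.
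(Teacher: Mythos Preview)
The paper offers no proof of this corollary; it is presented as an immediate consequence of Proposition~\ref{volumegrowth}, the implicit step being exactly the integration over $[r,R]$ that you describe. So at the level of strategy you and the paper coincide.

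Where your argument has a genuine gap is in the ``principal obstacle'' you yourself flag. You propose to bound $\m_X(B_\rho(x_0))^{-1/(2p-1)}$ by $C(K,N,p,D)\,\m_X(B_R(x_0))^{-1/(2p-1)}$ uniformly in $\rho\in[r,R]$, which amounts to $\m_X(B_\rho(x_0))\ge c\,\m_X(B_R(x_0))$ with $c$ independent of $r$. Since the corollary is asserted for every $r\in(0,R)$ and $\m_X(B_\rho(x_0))\to 0$ as $\rho\downarrow 0$, no such constant exists. Your fallback --- iterate Proposition~\ref{volumegrowth} on subintervals and telescope --- is circular: on each subinterval one needs precisely the relative volume comparison one is trying to establish, and the errors do not combine into something controlled solely by the quantity in the exponent.

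The way to avoid the circularity is not to estimate $\m_X(B_\rho(x_0))$ on the right but to absorb it on the left. Writing $\m_X(B_\rho(x_0))=F(\rho)\,v_{K,N}(\rho)$ in Proposition~\ref{volumegrowth} turns the inequality into
\[
\frac{d}{d\rho}\bigl[F(\rho)^{1/(2p-1)}\bigr]\;\le\;C_1\,\Lambda(R)^{1/(2p-1)}\Bigl(\tfrac{\rho}{v_{K,N}(\rho)}\Bigr)^{1/(2p-1)},
\]
and the right-hand side is integrable on $(0,R]$ precisely because $p>N/2$ (since $v_{K,N}(\rho)\sim c\rho^{N}$ near $0$ and $(N-1)/(2p-1)<1$). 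Integrating and then dividing through by $F(R)^{1/(2p-1)}$, together with $v_{K,N}(R)\le C(K,N,D)R^{N}$, yields
\[
1-\bigl(F(r)/F(R)\bigr)^{1/(2p-1)}\;\le\;C(K,N,p,D)\Bigl(\tfrac{R^{2p}\Lambda(R)}{\m_X(B_R(x_0))}\Bigr)^{1/(2p-1)},
\]
which is the Petersen--Wei form of the comparison and recovers exactly the quantity in the corollary's exponent. This is what your ``Bishop--Gromov-type comparison'' should be replaced by; the point is that the unknown volume factor becomes part of the total derivative rather than something to be bounded separately.
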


\begin{corollary}\label{bishopgromov}
Let $(X,o)$ be a normalized pmm space that is essentially nonbranching and satifies the condition $CD(\kappa,N)$. Let $D\geq 1$ and $p>\frac{N}{2}$. Then
\begin{align*}
\frac{\m_{\sX}(B_{R}(x))}{v_{K,N}(R)}\leq \frac{\m_{\sX}(B_{r}(x))}{v_{K,N}(r)} e^{ C(K,N,p,D) \left( k_{[X,o]}(p,K,D)\right)^{\frac{p}{2p-1}}}.
\end{align*}
for each $x\in B_{\frac{R}{2}}(o)\cap \supp \m_X$ and $0<r<1\leq \frac{1}{2}R\leq 2 R\leq D$.
\end{corollary}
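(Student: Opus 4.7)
The plan is to derive this corollary directly from the preceding corollary (the pointwise Bishop--Gromov type inequality) by estimating the local integral curvature quantity $\frac{R^{2p}}{\m_X(B_R(x))}\int_{B_R(x)}(\kappa-K)_-^p d\m$ in terms of the global normalized quantity $k_{[X,o]}(\kappa,p,K,D)$. The previous corollary is already in the exactly right form except that the integral is centered at $x$ rather than at the base point $o$; the only work is to control this off-center integral using the hypothesis $x\in B_{R/2}(o)$ together with the normalization $\m_X(B_1(o))=1$.

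First I would observe the inclusion $B_R(x)\subset B_{3R/2}(o)\subset B_D(o)$, which uses $d(x,o)<R/2$ together with the assumption $2R\leq D$ (so $3R/2\leq D$). This gives the upper bound
\[
\int_{B_R(x)}(\kappa-K)_-^p\,d\m\ \leq\ \int_{B_D(o)}(\kappa-K)_-^p\,d\m\ =\ \frac{k_{[X,o]}(\kappa,p,K,D)^p}{D^{2p}}
\]
directly from the definition of $k_{[X,o]}$. Next, I would note the reverse inclusion $B_1(o)\subset B_R(x)$: any $y\in B_1(o)$ satisfies $d(y,x)\leq d(y,o)+d(o,x)<1+R/2\leq R$, where the last step uses $R\geq 2$ (which follows from $1\leq R/2$). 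Consequently
\[
\m_X(B_R(x))\ \geq\ \m_X(B_1(o))\ =\ 1
\]
by the normalization assumption. Combining these two estimates together with $R\leq D$,
\[
\frac{R^{2p}}{\m_X(B_R(x))}\int_{B_R(x)}(\kappa-K)_-^p\,d\m\ \leq\ \frac{R^{2p}}{D^{2p}}\,k_{[X,o]}(\kappa,p,K,D)^p\ \leq\ k_{[X,o]}(\kappa,p,K,D)^p.
\]

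Finally I would feed this inequality into the previous corollary applied at the base point $x\in\supp\m_X$. That corollary yields
\[
\frac{\m_X(B_R(x))}{v_{K,N}(R)}\ \leq\ \frac{\m_X(B_r(x))}{v_{K,N}(r)}\,\exp\!\Bigl(C(K,N,p,D)\bigl(k_{[X,o]}(\kappa,p,K,D)^p\bigr)^{\frac{1}{2p-1}}\Bigr),
\]
which, after noting $(k^p)^{1/(2p-1)}=k^{p/(2p-1)}$, is precisely the claimed inequality. No step is genuinely hard here; the only subtlety to track carefully is that the constant $C(K,N,p,D)$ from the preceding corollary is allowed to depend on $D$ (so the estimate $R\leq D$ can be absorbed cleanly), and that the inequality $R\geq 2$ is what makes $B_1(o)\subset B_R(x)$ work. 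In essence this corollary is a reformulation of the previous one in which the absolute measure $\m_X(B_R(x))$ and the local $L^p$ integral are traded for the single scale-invariant functional $k_{[X,o]}(\kappa,p,K,D)$, at the cost of restricting $x$ to lie in $B_{R/2}(o)$.
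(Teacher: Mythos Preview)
Your proof is correct and follows essentially the same approach as the paper: both use the chain of inclusions $B_1(o)\subset B_R(x)\subset B_D(o)$ (the paper writes it as $B_1(o)\subset B_{R/2}(o)\subset B_R(x)\subset B_{2R}(o)\subset B_D(o)$) together with the normalization $\m_X(B_1(o))=1$ to bound $\frac{R^{2p}}{\m_X(B_R(x))}\int_{B_R(x)}(\kappa-K)_-^p\,d\m$ by $k_{[X,o]}(p,K,D)^p$, and then invoke the preceding corollary. Your write-up is somewhat more explicit than the paper's, but the argument is the same.
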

\begin{proof}
Let $x\in B_{R/2}(o)$. It holds
\begin{align*}
B_1(o)\subset B_{R/2}(o)\subset B_R(x) \subset B_{2R}(o) \subset B_D(o)
\end{align*}
Therefore 
\begin{align*}
 \frac{R^{2p}}{\m_X(B_R(x_0))}\int_{B_R(x_0)} (\kappa -K)^p_{-} d\m\leq  D^{2p}\int_{B_D(o)} (\kappa-K)^p_{-} d\m = k_{[X,o]}(p,K,D)^p.
\end{align*}
and together with the previous Corollary the claim follows.
\end{proof}

\begin{corollary}\label{pointedprecompactness}
Consider the family $\mathcal{X}(p,K,N)$ of isomorphism classes of essentially nonbranching pmm spaces $(X,o)$ satisfying a condition 
\linebreak[4] $CD(\kappa,N)$  for some $\kappa\in C(X)$
such that ${k}_{[X,o]}(p,K,D)\leq f(D)$ for all $[X,o]\in \mathcal X(p,K,N)$,  for all $D\geq 1$, $p>\frac{N}{2}$ and some function $f:[1,\infty)\rightarrow \mathbb R$.
Then $\mathcal{X}(p,K,N)$ is precompact in the sense of pmG convergence.
\end{corollary}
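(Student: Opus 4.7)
The plan is to reduce the statement to the pointed measured Gromov--Hausdorff precompactness criterion of Theorem \ref{th:compactness} and then upgrade to pmG convergence via Theorem \ref{th:gms}. For this I need, for every fixed $R>0$, two uniform bounds over the family $\mathcal{X}(p,K,N)$: an upper bound $\m_X(B_R(o))\leq V(R)$ and, for every $\epsilon>0$, an upper bound $N(R,\epsilon)$ on the cardinality of any $\epsilon$-separated subset of $B_R(o)$. Both will follow from the Bishop--Gromov type estimates of Section 3. Since $(\kappa-K')_-\leq (\kappa-K)_-$ whenever $K'\leq K$, replacing $K$ by $\min(K,0)$ preserves the hypothesis $k_{[X,o]}(\kappa,p,K,D)\leq f(D)$, so I may assume $K\leq 0$ throughout and apply Proposition \ref{volumegrowth} and its corollaries directly.

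For the upper bound, fix $R\geq 2$ and apply Corollary \ref{bishopgromov} at $x=o$ with inner radius $r=1/2$ and outer radius $R$ (using parameter $D=2R$). Normalization $\m_X(B_{1/2}(o))\leq \m_X(B_1(o))=1$ yields $\m_X(B_R(o))\leq v_{K,N}(R)\,v_{K,N}(1/2)^{-1}\exp(C(K,N,p,2R)\,f(2R)^{p/(2p-1)})=:V(R)$, and for $R<2$ I take $V(R):=V(2)$. For a lower bound on $\m_X$ of small balls at generic centers, fix $x\in B_R(o)\cap \supp\m_X$ and invoke the generic-center Bishop--Gromov inequality (the corollary directly preceding Corollary \ref{bishopgromov}) at $x_0=x$ with outer radius $R'=2R+1$ and inner radius $\epsilon/2$. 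Since $d(x,o)<R$, we have $B_1(o)\subset B_{R'}(x)\subset B_{3R+1}(o)$, hence $\m_X(B_{R'}(x))\geq 1$ and the integral term inside the exponential is controlled by $R'^{2p}\m_X(B_{R'}(x))^{-1}\int_{B_{R'}(x)}(\kappa-K)_-^p\,d\m_X \leq (2R+1)^{2p}(3R+1)^{-2p}f(3R+1)^p$, uniformly in the family. This produces a uniform constant $\bar C=\bar C(K,N,p,R,f)$ with $\m_X(B_{\epsilon/2}(x))\geq v_{K,N}(\epsilon/2)\,v_{K,N}(2R+1)^{-1}\bar C^{-1}=:c(\epsilon,R)>0$.

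The standard packing argument then closes the loop: if $\{x_1,\dots,x_N\}\subset B_R(o)$ is $\epsilon$-separated, the balls $B_{\epsilon/2}(x_i)$ are pairwise disjoint and contained in $B_{R+\epsilon/2}(o)$, so $N\cdot c(\epsilon,R)\leq V(R+\epsilon/2)$ and hence $N\leq N(R,\epsilon):=V(R+\epsilon/2)/c(\epsilon,R)$. Thus the underlying pointed metric spaces are uniformly totally bounded and the masses of balls $B_R(o)$ are uniformly bounded, so Theorem \ref{th:compactness} yields a pmGH-convergent subsequence, and Theorem \ref{th:gms} upgrades this to pmG convergence of the isomorphism classes, proving precompactness. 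The one obstacle worth noting is the mismatch between the two forms of Bishop--Gromov available: Corollary \ref{bishopgromov} requires the center to lie near $o$ and exploits normalization, whereas the packing step needs a lower bound on $\m_X(B_{\epsilon/2}(x))$ for $x$ anywhere in $B_R(o)$. This forces me to pass to the unnormalized predecessor statement and to inflate the outer radius to $R'=2R+1$, which contains $B_1(o)$ and so transfers normalization to the shifted center $x$; once this is done the whole argument is purely volume comparison and packing.
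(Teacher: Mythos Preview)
Your proof is correct and follows the same route as the paper: derive uniform total boundedness and uniform mass bounds on balls from the Bishop--Gromov comparison of Section~3, then invoke Theorem~\ref{th:compactness} and upgrade to pmG convergence via Theorem~\ref{th:gms}. The only difference is bookkeeping for the lower volume bound at shifted centers: you pass to the unnormalized predecessor of Corollary~\ref{bishopgromov}, but that corollary already applies to any center $x\in B_{R/2}(o)$ (the inclusion $B_1(o)\subset B_R(x)$ is built into its proof), so the paper uses it directly at the shifted center, simply taking the outer radius large enough. Your explicit reduction to $K\leq 0$ is a point the paper leaves tacit.
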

\begin{proof}
Consider $\pmmsc\in\mathcal{X}(p,K,N,k)$, and let $D>0$. Let $(X,o)$ be a normalized representative of $\pmmsc$. Let $0<r<1\leq \frac{1}{2}R\leq 2R \leq D$.
By the previous corollary
the number of $r$-balls with center in $B_{R/2}(o)$ is  bounded by constant that depends only on $K,N,p,r$ and $D\geq 2R$.
Hence the family of pmm spaces with isomorphism class contain in $\mathcal X(p,K,N)$ is uniformily totally bounded. Moreover 
\begin{align*}
\sup_{[X,o]\in \mathcal X(p,K,D)} {\m_{\sX}(B_{R}(x))}\leq \frac{v_{K,N}(R)}{v_{K,N}(r)} e^{ C(K,N,p,2R) \left( f(2R)\right)^{\frac{p}{2p-1}}}.
\end{align*}
Hence, the family of mm spaces such that the corresponding isomorphism class is contained in $\mathcal X(p,K,N)$ is 
precompact w.r.t. pmGH convergence according to Theorem \ref{th:compactness}.
Hence, the corresponding isomorphism classes are precompact w.r.t. pG convergence.
\end{proof}

\section{1-dimensional estimates}
The computations in the previous section motivate the following $1D$ estimates. 

Let $K\in \mathbb R$ and $N\in [2,\infty)$, and
let $\kappa: [0,L]\rightarrow \mathbb{R}$. 
We set $\omega(r)= \frs_{\kappa/(N-1)}^{N-1}(r)$.
The function $g(r)=\log \omega(r)$ solves
\begin{align}\label{ineq:ricatti}
g'' + \frac{1}{N-1} (g')^2 + \kappa= 0\ \mbox{ on } \ [0,\pi_{\kappa/(N-1)}].
\end{align}
Let $g_{\sK,\sN-1}: [0,L] \rightarrow \mathbb{R}$ be defined as $g_{\sK,N-1}=\log \frs_{K/(N-1)}^{N-1}$. The function $g_{\sK,\sN-1}$ is a solution of 
\begin{align*}
g_{\sK,\sN-1}'' + \frac{1}{N-1}(g_{\sK,\sN-1}')^2 + K=0 \ \mbox{ on }\ [0,\pi_{K/(N-1)}].
\end{align*}
Let $\psi_{\sK,\sN-1}(r)=\max \left\{0, g'(r)-g'_{\sK,\sN-1}(r)\right\}$ that solves 
\begin{align*}
\psi_{\sK,\sN-1}'+\frac{ \psi_{\sK,\sN-1}^2}{N-1}+ \frac{2\psi_{\sK,\sN-1} g_{\sK,\sN-1}}{N-1}\leq \kappa \ \ \& \ \ 
\lim_{r\rightarrow 0}\psi_{\sK,\sN-1}(r)=0.
\end{align*}

The following Theorem by Petersen/Wei/Aubry in \cite[Lemma 2.2]{petersenwei} and \cite[Lemma 3.1]{aubry} is fundamental in the theory of integral Ricci curvature bounds.
\noindent
\begin{proposition}\label{petersenweiaubry} Let $\omega$ and $\psi_{K,N-1}$ be as above.
Then
\begin{align*}
\psi_{\sK,\sN-1}(r_0)^{2p-1}\omega(r_0)\leq  C'(p,N)\int_0^{r_0} (\kappa(r)-K)_{-}^p\omega(r ) dr.
\end{align*}
 where $r_0\in [0,L\wedge {\pi_{\kappa/(N-1)}}]\cap [0,\frac{1}{2}\pi_{K/(N-1)}]$
and 
\begin{align*}
\frs_{\sK/(N-1)}(r_0)^{4p-n-1}
\psi_{\sK,\sN-1}(r_0)^{2p-1}\omega(r_0)\leq  C'(p,N)\int_0^{r_0} (\kappa(r)-K)_{-}^p\omega(r ) dr.
\end{align*}
where $r_0\in [0,L]\cap (\frac{1}{2}{\pi_{K/(N-1)}}, \pi_{K/(N-1)})$ and $C'(p,N)=(2p-1)^p\left(\frac{N-1}{2p-N}\right)^{p-1}$.
\end{proposition}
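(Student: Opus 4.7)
My plan is to convert the Riccati-type inequality stated just before the proposition into a pointwise differential inequality for $\psi^{2p-1}\omega$ (weighted by a power of $\frs_{K/(N-1)}$ in the second range), and then integrate.

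\textbf{Step 1: the Riccati inequality.} I would first verify the inequality preceding the statement in a form suitable for integration. On $\{\psi_{K,N-1}>0\}$ one has $g'=g_{K,N-1}'+\psi_{K,N-1}$; subtracting the Riccati equations $g''+(g')^2/(N-1)+\kappa=0$ and $g_{K,N-1}''+(g_{K,N-1}')^2/(N-1)+K=0$ and simplifying yields
\[
\psi_{K,N-1}'+\frac{\psi_{K,N-1}^2}{N-1}+\frac{2g_{K,N-1}'\psi_{K,N-1}}{N-1}\le (\kappa-K)_-,
\]
the inequality being trivial on $\{\psi_{K,N-1}=0\}$. A Taylor expansion of $\frs_{\lambda}(r)$ near $0$ shows $\psi_{K,N-1}(r)\to 0$ as $r\to 0^+$ and $\omega(0)=0$, so boundary terms at $0$ vanish.

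\textbf{Step 2: the range $r_0\in[0,\tfrac12\pi_{K/(N-1)}]$.} Multiply the Riccati inequality by $(2p-1)\psi^{2p-2}\omega$, use $\omega'/\omega=g_{K,N-1}'+\psi$ on $\{\psi>0\}$ to collect the exact derivative $(\psi^{2p-1}\omega)'$, and obtain
\[
(\psi^{2p-1}\omega)'+\frac{2p-N}{N-1}\psi^{2p}\omega+\frac{4p-N-1}{N-1}g_{K,N-1}'\psi^{2p-1}\omega \le (2p-1)(\kappa-K)_-\psi^{2p-2}\omega.
\]
Since $g_{K,N-1}'=(N-1)\frc_{K/(N-1)}/\frs_{K/(N-1)}\ge 0$ on this interval and $p>N/2$, the third term on the LHS is non-negative and can be dropped. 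A weighted Young's inequality with exponents $p$ and $q=p/(p-1)$, applied to $a=(\kappa-K)_-\omega^{1/p}$ and $b=\psi^{2p-2}\omega^{(p-1)/p}$ with the free parameter tuned so that the resulting $\psi^{2p}\omega$ coefficient on the RHS equals $(2p-N)/(N-1)$, absorbs that term into the LHS and produces
\[
(\psi^{2p-1}\omega)'\le C'(p,N)(\kappa-K)_-^p\omega.
\]
Integrating from $0$ to $r_0$ and using $\psi(0)=0=\omega(0)$ gives the first inequality.

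\textbf{Step 3: the range $r_0\in(\tfrac12\pi_{K/(N-1)},\pi_{K/(N-1)})$.} Here $g_{K,N-1}'$ changes sign and the third term above cannot simply be discarded. The remedy is to absorb it into the total derivative using an integrating factor $F^\alpha$, where $F=\frs_{K/(N-1)}$. Set $Q=F^\alpha\psi^{2p-1}\omega$; computing $Q'/Q$ with $g_{K,N-1}'=(N-1)F'/F$, the coefficient of $F'/F$ comes out to $\alpha+N-1-2(2p-1)=\alpha-(4p-N-1)$, which \emph{vanishes} precisely at $\alpha=4p-N-1$. What remains is
\[
Q'\le -\frac{2p-N}{N-1}F^\alpha\psi^{2p}\omega+(2p-1)F^\alpha(\kappa-K)_-\psi^{2p-2}\omega,
\]
and the same Young step absorbs the $F^\alpha\psi^{2p}\omega$ term to give $Q'\le C'(p,N)F^\alpha(\kappa-K)_-^p\omega$. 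Integrating from $0$ to $r_0$ with $Q(0)=0$ yields the weighted bound.

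\textbf{Main obstacle.} The delicate point is Step 3: one must identify the exact value $\alpha=4p-N-1$ that kills the sign-indefinite $F'/F$ term, and then track the Young's constant carefully to reproduce $C'(p,N)=(2p-1)^p(\tfrac{N-1}{2p-N})^{p-1}$. Moreover, since the weight $F^\alpha$ is not monotone past the midpoint, it must be kept as an integrating factor throughout rather than estimated pointwise; the bound in the form stated in the proposition then amounts to recognizing that the $F^\alpha$ factor produced by this argument is precisely the one placed on the LHS at $r_0$ (modulo an elementary estimate on $F^\alpha(r)$ for $r\in[0,r_0]$).
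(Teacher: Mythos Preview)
The paper does not give its own proof of this proposition; it simply quotes it as \emph{the} Petersen--Wei/Aubry lemma and cites \cite[Lemma~2.2]{petersenwei} and \cite[Lemma~3.1]{aubry}. Your argument is precisely the standard proof from those references: multiply the Riccati inequality by $(2p-1)\psi^{2p-2}\omega$, recognise the exact derivative $(\psi^{2p-1}\omega)'$, drop the non-negative $g_{K,N-1}'$ term on $[0,\tfrac12\pi_{K/(N-1)}]$, and use weighted Young with exponents $(p,\tfrac{p}{p-1})$ to absorb the $\psi^{2p}\omega$ term. The integrating-factor trick with $F^{4p-N-1}$, $F=\frs_{K/(N-1)}$, to kill the sign-indefinite $g_{K,N-1}'$ term beyond the midpoint is exactly Aubry's refinement. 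Your computations (including the identification $\alpha=4p-N-1$) are correct; the minor sign slip in the displayed coefficient of $F'/F$ is harmless since you are setting it to zero.

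One genuine point you should tighten: after integrating $Q'\le C'(p,N)F^\alpha(\kappa-K)_-^p\omega$ you obtain $F^\alpha$ also \emph{inside} the integral on the right, whereas the proposition as stated carries $F(r_0)^{4p-N-1}$ only on the left. For $K>0$ one has $F(r)\le\sqrt{(N-1)/K}$ rather than $F(r)\le 1$, so the ``elementary estimate'' you allude to produces an extra $K$-dependent factor unless one first normalises to $K=N-1$ (so $F=\sin\le 1$) and then rescales. This is harmless for the applications in the paper, which in any case pass to the $K$- and $\epsilon$-dependent constant $C''(p,N,K,\epsilon)$ of Remark~\ref{rem:const}, but you should make the rescaling explicit rather than call it elementary.
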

\begin{remark}
Note that 
$C'(p,2)=(2p-1)^p\left(\frac{1}{2p-2}\right)^{p-1}\rightarrow 1$ as $p\rightarrow 1$.
\end{remark}
\begin{remark}\label{rem:const}
For $K>0$, $\epsilon>0$, $L\in (0, \pi_{K/(N-1)}-\epsilon)$ and $r_0 \in (0,L]$ it holds that 
\begin{align*}
\psi_{\sK,\sN-1}(r_0)^{2p-1}\omega(r_0)\leq  C''(p,N,K,\epsilon)\int_0^{r_0} (\kappa(r)-K)_{-}^p\omega(r ) dr.
\end{align*}
where $C''(p,N,K,\epsilon)=\max\{ C'(p,N), {\frs_{\sK/(N-1)}(\epsilon)^{-4p+n+1}}\}$. 
We set 
\begin{align*}
C:= C(p,N,K,\epsilon):=\begin{cases} C'(p,N)\ &\ \mbox{ if }K\leq 0,\\
C''(p,N,K,\epsilon)\ &\ \mbox{ if }K>0.
\end{cases}
\end{align*}
\end{remark}
\begin{corollary}\label{thatisacor}For $K\in \mathbb R$, $N\in [2,\infty)$, $p>\frac{N}{2}$, $\epsilon>0$ and $\theta\in (0,L\wedge \pi_{\kappa/(N-1)})\cap (0,\pi_{K/(N-1)}-\epsilon)$ it holds
\begin{align*}
\int_0^\theta\psi_{\sK,\sN-1}(r)^{2p-1}\omega(r)dr\leq   C\theta\int_0^{\theta} (\kappa(r)-K)_{-}^p\omega(r ) dr.
\end{align*}
\end{corollary}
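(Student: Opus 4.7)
The plan is to derive Corollary \ref{thatisacor} as an essentially immediate consequence of the pointwise Petersen--Wei--Aubry estimate in Proposition \ref{petersenweiaubry} combined with Remark \ref{rem:const}, by integrating the pointwise bound over $[0,\theta]$.

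First I would verify that, under the stated hypotheses on $\theta$, the pointwise inequality
\[
\psi_{\sK,\sN-1}(r_0)^{2p-1}\omega(r_0)\leq C\int_0^{r_0}(\kappa(r)-K)_{-}^p\,\omega(r)\,dr
\]
is available for every $r_0\in(0,\theta]$, with the same constant $C=C(p,N,K,\epsilon)$ defined in Remark \ref{rem:const}. In the case $K\leq 0$ one has $\pi_{K/(N-1)}=\infty$, so that $r_0\in(0,\theta]\subset(0,L\wedge\pi_{\kappa/(N-1)})$ lies in the regime $[0,L\wedge\pi_{\kappa/(N-1)}]\cap[0,\tfrac12\pi_{K/(N-1)}]$ covered by the first half of Proposition \ref{petersenweiaubry}, with $C=C'(p,N)$. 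In the case $K>0$, the assumption $\theta<\pi_{K/(N-1)}-\epsilon$ places us in the range treated in Remark \ref{rem:const}, where the pointwise bound holds with the larger constant $C''(p,N,K,\epsilon)$ that absorbs the singular factor $\frs_{K/(N-1)}(r_0)^{-(4p-n-1)}$ appearing in the second half of Proposition \ref{petersenweiaubry} past the equator.

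Second I would integrate this pointwise inequality in $r_0$ from $0$ to $\theta$ and interchange the order of integration by Fubini:
\begin{align*}
\int_0^\theta\psi_{\sK,\sN-1}(r_0)^{2p-1}\omega(r_0)\,dr_0
&\leq C\int_0^\theta\!\!\int_0^{r_0}(\kappa(r)-K)_{-}^p\,\omega(r)\,dr\,dr_0.
\end{align*}
The inner integral is monotone nondecreasing in $r_0$, so it is dominated by its value at $r_0=\theta$, which is independent of the outer variable. Pulling the resulting constant integral out yields
\begin{align*}
\int_0^\theta\psi_{\sK,\sN-1}(r_0)^{2p-1}\omega(r_0)\,dr_0
&\leq C\int_0^\theta\!\!\left[\int_0^\theta(\kappa(r)-K)_{-}^p\,\omega(r)\,dr\right]dr_0\\
&= C\,\theta\int_0^\theta(\kappa(r)-K)_{-}^p\,\omega(r)\,dr,
\end{align*}
which is exactly the claimed inequality.

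Since the whole argument is just a pointwise estimate integrated against $dr_0$, there is essentially no obstacle; the only subtlety worth double-checking is the case distinction on the sign of $K$ and the matching choice of the constant $C$, so that the same $C$ from Remark \ref{rem:const} serves uniformly on all of $(0,\theta]$. Once that bookkeeping is in place, the conclusion is immediate.
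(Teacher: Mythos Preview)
Your proposal is correct and follows exactly the approach implicit in the paper: the corollary is stated without proof immediately after Proposition~\ref{petersenweiaubry} and Remark~\ref{rem:const}, and it is obtained precisely by integrating the pointwise bound over $r_0\in(0,\theta]$ and then dominating the inner integral by its value at $r_0=\theta$, as you do.
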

\begin{remark}\label{rem:A} Recall the distortion coefficient
$$r\in [0,\theta]\mapsto \sigma_{\kappa/(N-1)}^{(r/\theta)}(\theta)=\frac{\frs_{\kappa/(N-1)}(r)}{\frs_{\kappa/N-1)}(\theta)}$$
for $\theta\in [0,\pi_{\kappa/(N-1)})$ and $\infty$ otherwise.
For $p>\frac{N}{2}$ and $\theta\in (0,L\wedge \pi_{\kappa/(N-1)})\cap (0,\frac{1}{2}\pi_{K/(N-1)}-\epsilon)$  we have the
inequality
\begin{align}\label{ineq:some}
&\int_{t\theta}^{\theta} \psi_{\sK,\sN-1}(r)\sigma_{\kappa/(\sN-1)}^{(r/\theta)}(\theta)^{\sN-1}dr\leq \nonumber\\
&\  \left(C\theta^{2p-1}\int_0^\theta (\kappa(r)-K)_{-}^p\sigma_{\kappa/(\sN-1)}^{(r/\theta)}(\theta)^{N-1}dr \right)^{\frac{1}{2p-1}}\left(\int_t^{1}\sigma_{\kappa/(\sN-1)}^{(s)}(\theta)^{N-1}ds\right)^{\frac{2p-2}{2p-1}}.
\end{align}
Indeed, the transformation $t\mapsto t\theta$ and
H\"older's inequality yield
\begin{align*}
&\int_{t\theta}^{\theta}\psi_{\sK,\sN-1}(r)\frs_{\kappa/(\sN-1)}^{\sN-1}(r)dr\leq\int_{t}^1 \theta\cdot \psi_{\sK,\sN-1}(s\theta)\frs_{\kappa/(\sN-1)}^{\sN-1}(s\theta)ds\\
&\ \ \ \ \leq \left(\int_t^{1} \big(\theta\cdot \psi_{\sK,\sN-1}(s\theta)\big)^{2p-1}\frs_{\kappa/(\sN-1)}^{N-1}(s\theta)ds \right)^{\frac{1}{2p-1}}\left(\int_{t}^{1}\frs_{\kappa/(\sN-1)}^{N-1}(s\theta)ds\right)^{1-\frac{1}{2p-1}}\\
&\ \ \ \ \leq \left(\theta^{2p-2}\int_0^{\theta} \cdot \psi_{\sK,\sN-1}(r)^{2p-1}\frs_{\kappa/(\sN-1)}^{N-1}(r)dr \right)^{\frac{1}{2p-1}}\left(\int_{t}^{1}\frs_{\kappa/(\sN-1)}^{N-1}(s\theta)ds\right)^{1-\frac{1}{2p-1}}.
\end{align*}
Then, we can apply Corollary \ref{thatisacor} and devide by $\frs_{\sK/(\sN-1)}^{\sN-1}(\theta)$ to obtain \eqref{ineq:some}.
\end{remark}
\begin{lemma}\label{lem:B}
Let $K\in \mathbb{R}$, $N\geq 2$, $\kappa: [0,L]\rightarrow \mathbb{R}$, $N$ and $\epsilon>0$ be as before, and let $\theta\in (0,L\wedge {\pi_{K/(N-1)}}-\epsilon)$.
Then
\begin{align}\label{ineq:distcoef3}
\tau_{\sK,\sN}^{(t)}(\theta)\leq \tau_{\kappa,\sN}^{(t)}(\theta)+\Lambda^{\frac{1}{N}}\left[t\theta\int_t^1\psi_{\sK,\sN-1}(s\theta)\sigma_{\kappa/(\sN-1)}^{(s)}(\theta)^{\sN-1}ds\right]^{\frac{1}{N}}.
\end{align}
where the constant $\Lambda$ is given by
$$\Lambda:=\Lambda(K,N,\epsilon):=1+\max_{r\in [\frac{1}{2}\pi_{\sK/(\sN-1)},\pi_{\sK/(\sN-1)}-\epsilon]} \frac{1}{\frs_{\sK/(\sN-1)}(r)^{N-1}}.$$
Note that $\pi_{K/(N-1)}=\infty$ if $K\leq 0$. In this case we set $\Lambda=1$.
\end{lemma}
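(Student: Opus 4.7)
The plan is to reduce the inequality to a one-dimensional ODE comparison. Writing $\omega(r) := \frs_{\kappa/(N-1)}(r)^{N-1}$ and $\omega_K(r) := \frs_{K/(N-1)}(r)^{N-1}$, observe that $\sigma_{\kappa/(N-1)}^{(t)}(\theta)^{N-1} = \omega(t\theta)/\omega(\theta)$, and hence
\begin{equation*}
\left[\tau_{\kappa,N}^{(t)}(\theta)\right]^N = t\,\frac{\omega(t\theta)}{\omega(\theta)}, \qquad \left[\tau_{K,N}^{(t)}(\theta)\right]^N = t\,\frac{\omega_K(t\theta)}{\omega_K(\theta)}.
\end{equation*}
By the subadditivity $(a+b)^{1/N} \leq a^{1/N} + b^{1/N}$ for $a,b \geq 0$, applied with $a = \left[\tau_{\kappa,N}^{(t)}(\theta)\right]^N$ and $b$ the nonnegative part of $\left[\tau_{K,N}^{(t)}(\theta)\right]^N - a$ (the case $b \leq 0$ being trivial), it suffices to bound $\omega_K(t\theta)/\omega_K(\theta) - \omega(t\theta)/\omega(\theta)$ above by $\Lambda\,\theta \int_t^1 \psi_{K,N-1}(s\theta)\,\sigma_{\kappa/(N-1)}^{(s)}(\theta)^{N-1}\,ds$.

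To this end I would run a Gronwall-type argument. Set $D(s) := \omega_K(s)\omega(\theta) - \omega(s)\omega_K(\theta)$, so that $D(\theta)=0$. Writing $g = \log\omega$ and $g_K = \log\omega_K$, the Riccati equation $g'' + (g')^2/(N-1) + \kappa = 0$ (likewise for $K$) combined with the definition of $\psi_{K,N-1}$ gives $g'(s) - g_K'(s) \leq \psi_{K,N-1}(s)$. A direct computation then yields
\begin{equation*}
D'(s) - g_K'(s)\,D(s) = (g_K'(s) - g'(s))\,\omega(s)\,\omega_K(\theta) \geq -\psi_{K,N-1}(s)\,\omega(s)\,\omega_K(\theta).
\end{equation*}
Multiplying by the positive integrating factor $1/\omega_K(s)$, integrating from $t\theta$ to $\theta$, and dividing by $\omega(\theta)\omega_K(\theta)$ yields the pointwise bound
\begin{equation*}
\frac{\omega_K(t\theta)}{\omega_K(\theta)} - \frac{\omega(t\theta)}{\omega(\theta)} \leq \int_{t\theta}^\theta \psi_{K,N-1}(s)\,\frac{\omega(s)}{\omega(\theta)}\cdot\frac{\omega_K(t\theta)}{\omega_K(s)}\,ds.
\end{equation*}

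The remaining and main difficulty is to control the nuisance factor $\omega_K(t\theta)/\omega_K(s)$ uniformly on $s \in [t\theta,\theta]$ by the constant $\Lambda$. For $K \leq 0$, $\omega_K$ is monotone increasing on $[0,\infty)$, so the factor is at most $1 = \Lambda$. For $K > 0$, $\omega_K$ is unimodal on $[0,\pi_{K/(N-1)}]$ with maximum at $\pi_{K/(N-1)}/2$. I would split the analysis at the peak: on the ascending branch ($s \leq \pi_{K/(N-1)}/2$), monotonicity again gives ratio $\leq 1$; on the descending branch, the symmetry $\frs_{K/(N-1)}(\pi_{K/(N-1)} - r) = \frs_{K/(N-1)}(r)$ together with $\theta \leq \pi_{K/(N-1)} - \epsilon$ gives the lower bound $\omega_K(s) \geq \omega_K(\epsilon)$, which yields the $\Lambda$-estimate dictated by the stated form of $\Lambda$. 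Once the $\Lambda$-bound is secured, substituting $u = s\theta$ in the integral and multiplying through by $t$ produces $\left[\tau_{K,N}^{(t)}(\theta)\right]^N - \left[\tau_{\kappa,N}^{(t)}(\theta)\right]^N \leq \Lambda \cdot t\theta\int_t^1 \psi_{K,N-1}(s\theta)\,\sigma_{\kappa/(N-1)}^{(s)}(\theta)^{N-1}\,ds$, and the subadditivity inequality from the first paragraph concludes the lemma.
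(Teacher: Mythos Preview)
Your argument is correct and is essentially the same as the paper's proof. Your Gronwall/integrating-factor computation for $D(s)=\omega_K(s)\omega(\theta)-\omega(s)\omega_K(\theta)$ is algebraically equivalent to the paper's direct differentiation of the ratio $\omega/\omega_K$ (indeed $(D/\omega_K)'=-\omega_K(\theta)\,(\omega/\omega_K)'$), and after cross-multiplying you arrive at the identical integral inequality with the factor $\omega_K(t\theta)/\omega_K(s)$, which both you and the paper then bound by $\Lambda$ via the monotonicity of $\frs_{K/(N-1)}$ on $[0,\tfrac12\pi_{K/(N-1)}]$ together with the lower bound on $[\tfrac12\pi_{K/(N-1)},\pi_{K/(N-1)}-\epsilon]$.
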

\begin{proof}
Recall that by definition $\tau_{\kappa,N}^{(t)}(\theta)=\infty$ if $\theta\geq \pi_{\kappa/(N-1)}$. Then the inequality holds. So we assume that $L<\pi_{\kappa/(N-1)}$.

We observe that $\log\frs_{\kappa/(\sN-1)}^{\sN-1}=g$
solves
\begin{align*}
\frac{d}{dr}\frs_{\kappa/(\sN-1)}^{\sN-1}=\frs_{\kappa/(\sN-1)}^{\sN-1}\frac{d}{dr}g.
\end{align*}
Then, we compute
\begin{align*}
\frac{d}{dr}\left[\frac{\frs_{\kappa/(\sN-1)}}{\frs_{\sK/(\sN-1)}}\right]^{\sN-1}
\leq \psi_{\sK,\sN-1}\left[\frac{\frs_{\kappa/(\sN-1)}}{\frs_{\sK/(\sN-1)}}\right]^{\sN-1}
\end{align*}
where $\psi_{\sK,\sN-1}=\max\left\{0,g'-g'_{\sK/(\sN-1)}\right\}$ as before.
Integration w.r.t. $r$ from $t\theta\in (0,\theta)$ to $\theta$ yields
\begin{align*}
\left[\frac{\frs_{\kappa/(\sN-1)}(\theta)}{\frs_{\sK/(\sN-1)}(\theta)}\right]^{\sN-1}- \left[\frac{\frs_{\kappa/(\sN-1)}(t\theta)}{\frs_{\sK/(\sN-1)}(t\theta)}\right]^{\sN-1}\leq \int_{t\theta}^{\theta} \psi_{\sK,\sN}(r)\left[\frac{\frs_{\kappa/(\sN-1)}(r)}{\frs_{\sK/(\sN-1)}(r)}\right]^{\sN-1}dr.
\end{align*}
Crossmultiplication gives
\begin{align*}
\sigma_{\sK/(\sN-1)}^{(t)}(\theta)^{\sN-1}-\sigma_{\kappa/(\sN-1)}^{(t)}(\theta)^{\sN-1}\leq \int_{t\theta}^{\theta}\psi_{\sK,\sN}(r)\left[\sigma_{\kappa/(\sN-1)}^{(r/\theta)}(\theta) \frac{\frs_{\sK/(\sN-1)}(t\theta)}{\frs_{\sK/(\sN-1)}(r)}\right]^{\sN-1} dr.
\end{align*}
Let us consider the left hand side of the previous inequality, and recall that $r\in [0,L\wedge \frac{\pi_{K/(N-1)}}{2}]\mapsto \frs_{\sK/\sN}(r)$ is monotone increasing. It follows that
\begin{align*}
 &\int_{t\theta}^{\theta}\psi_{\sK,\sN}(r)\left[\sigma_{\kappa/(\sN-1)}^{(r/\theta)}(\theta) \frac{\frs_{\sK/(\sN-1)}(t\theta)}{\frs_{\sK/(\sN-1)}(r)}\right]^{\sN-1} dr
 \leq \Lambda\int_{t\theta}^{\theta}\psi_{\sK,\sN}(r)\sigma_{\kappa/(\sN-1)}^{(r/\theta)}(\theta)^{\sN-1} dr.
\end{align*}
Therefore it follows
\begin{align*}
\sigma_{\sK/(\sN-1)}^{(t)}(\theta)^{\sN-1}\leq \sigma_{\kappa/(\sN-1)}^{(t)}(\theta)^{\sN-1}+\Lambda\theta\int_t^1\psi_{\sK,\sN-1}(s\theta)\sigma_{\kappa/(\sN-1)}^{(s)}(\theta)^{\sN-1}ds
\end{align*}
Multiplication with $t$ yields
\begin{align}\label{ineq:distcoef}
\tau_{\sK,\sN}^{(t)}(\theta)^{\sN}\leq \tau_{\kappa,\sN}^{(t)}(\theta)^{\sN}+t\theta \Lambda\int_t^1\psi_{\sK,\sN-1}(s\theta)\sigma_{\kappa/(\sN-1)}^{(s)}(\theta)^{\sN-1}ds.
\end{align}
Since $(\alpha + \beta)^{1/N}\leq \alpha^{1/N}+\beta^{1/N}$, $\alpha,\beta>0$, the claim follows.
\end{proof}
We note that 
\begin{align*}
t\int_t^1\sigma_{\kappa/(\sN-1)}^{(s)}(\theta)^{\sN-1}ds\leq \int_t^1\tau_{\kappa,\sN}^{(s)}(\theta)^{\sN}ds.
\end{align*}
Then, by Remark \ref{rem:A} and Lemma \ref{lem:B} we obtain the following.
\begin{corollary} \label{cor:disest} Let $K\in \mathbb{R}$, $N\geq 2$, $p>N/2$, $\kappa:[0,L]\rightarrow \mathbb R $, $\epsilon>0$ and $\theta\in (0,L\wedge \pi_{K/(N-1)}-\epsilon)$ as before.
Then
\begin{align*}
&\tau_{\sK,\sN}^{(t)}(\theta)- \tau_{\kappa,\sN}^{(t)}(\theta)\leq  \\
&\Lambda^{\frac{1}{N}}\left(Ct\theta^{2p}\int_0^1 (\kappa(s\theta)-K)_{-}^p\sigma_{\kappa/(\sN-1)}^{(s)}(\theta)^{N-1}ds \right)^{\frac{1}{N(2p-1)}}\left(\int_t^{1}{\tau_{\kappa,\sN}^{(s)}(\theta)^{N}ds}\right)^{\frac{2p-2}{N(2p-)}}.
\end{align*}

\end{corollary}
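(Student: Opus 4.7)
The plan is a direct chaining of Lemma \ref{lem:B} with the inequality \eqref{ineq:some} from Remark \ref{rem:A}, followed by an exponent-bookkeeping step that exploits the identity $\tau_{\kappa,N}^{(s)}(\theta)^N = s\,\sigma_{\kappa/(N-1)}^{(s)}(\theta)^{N-1}$, which is clear from the definition of $\tau$.

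First, I would invoke Lemma \ref{lem:B} to obtain
\begin{align*}
\tau_{K,N}^{(t)}(\theta) - \tau_{\kappa,N}^{(t)}(\theta) \leq \Lambda^{\frac{1}{N}}\left[\, t\theta \int_t^1 \psi_{K,N-1}(s\theta)\,\sigma_{\kappa/(N-1)}^{(s)}(\theta)^{N-1}\,ds\right]^{\frac{1}{N}}.
\end{align*}
Next, I would rewrite the inner integral via the substitution $r=s\theta$ so that it agrees, up to the factor $\theta$, with the left hand side of \eqref{ineq:some}; the same substitution turns $\int_0^\theta (\kappa(r)-K)_-^p \sigma_{\kappa/(N-1)}^{(r/\theta)}(\theta)^{N-1}\,dr$ into $\theta\int_0^1 (\kappa(s\theta)-K)_-^p \sigma_{\kappa/(N-1)}^{(s)}(\theta)^{N-1}\,ds$. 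Plugging this into \eqref{ineq:some} bounds $\theta\int_t^1 \psi_{K,N-1}(s\theta)\sigma_{\kappa/(N-1)}^{(s)}(\theta)^{N-1}\,ds$ by the product
\begin{align*}
\left(C\theta^{2p}\int_0^1 (\kappa(s\theta)-K)_-^p \sigma_{\kappa/(N-1)}^{(s)}(\theta)^{N-1}\,ds\right)^{\frac{1}{2p-1}}\left(\int_t^1 \sigma_{\kappa/(N-1)}^{(s)}(\theta)^{N-1}\,ds\right)^{\frac{2p-2}{2p-1}}.
\end{align*}

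Finally, I would split the remaining factor $t$ as $t = t^{1/(2p-1)}\cdot t^{(2p-2)/(2p-1)}$ and distribute each piece into the corresponding factor on the right. The first piece combines with the $L^p$ integral to produce the term $\left(Ct\theta^{2p}\int_0^1 (\kappa(s\theta)-K)_-^p \sigma_{\kappa/(N-1)}^{(s)}(\theta)^{N-1}\,ds\right)^{1/(2p-1)}$, exactly as in the claim. For the second piece, I would apply the inequality $t\int_t^1 \sigma_{\kappa/(N-1)}^{(s)}(\theta)^{N-1}\,ds \leq \int_t^1 \tau_{\kappa,N}^{(s)}(\theta)^N\,ds$ noted immediately before the corollary, which is just $t\leq s$ on $[t,1]$ together with the identity $\tau_{\kappa,N}^{(s)}(\theta)^N = s\,\sigma_{\kappa/(N-1)}^{(s)}(\theta)^{N-1}$. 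Taking $1/N$-th powers and collecting exponents then yields the stated inequality, with the exponents $\frac{1}{N(2p-1)}$ and $\frac{2p-2}{N(2p-1)}$ appearing automatically.

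There is no real obstacle: the argument is a mechanical assembly of the two cited prerequisites. The only subtlety is ensuring the factors of $\theta$ and $t$ land inside the correct roots, so that the integral curvature excess ultimately appears with the correct exponent $\frac{p}{N(2p-1)}$ in the conclusion. The constant $\Lambda^{1/N}$ propagates unchanged from Lemma \ref{lem:B}.
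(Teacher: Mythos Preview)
Your proposal is correct and matches the paper's approach exactly: the corollary is derived by combining Lemma~\ref{lem:B} with inequality~\eqref{ineq:some} from Remark~\ref{rem:A}, then distributing the factor $t$ via $t=t^{1/(2p-1)}\cdot t^{(2p-2)/(2p-1)}$ and invoking the observation $t\int_t^1\sigma_{\kappa/(N-1)}^{(s)}(\theta)^{N-1}ds\leq \int_t^1\tau_{\kappa,N}^{(s)}(\theta)^N ds$ noted just before the statement. The paper gives no further argument beyond pointing to these two ingredients, and your bookkeeping of the $\theta$ and $t$ factors is accurate.
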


\section{An application of Area and Co-area formula}

For this section we consider the following setup.
Let $\Pi$ be a dynamical optimal plan with $(e_t)_{\#}\Pi = \mu_t$, $t\in[0,1]$ and $\mu_i\in \mathcal{P}^2(\m)$, $i=0,1$ such that $\mu_i$, $i=0,1$, are concentrated in $B_{R/2}(o)$ for some $o\in M$. Then $\mu_t$ is concentrated in $B_{R}(o)=:B_{R}$ for all $t\in [0,1]$. Let us fix $t_0\in (0,1)$. 

Recall from paragraph 3 in the proof of Theorem \ref{smoothcase} that there exists a Lipschitz map $e_{t_0}^{-1}: \bar B_R \rightarrow \mathcal G(M)$ such that $(e_0,e_1)\circ e_{t_0}^{-1}(B_R)\subset M^2$ is a monotone set,  $(e_{t_0}^{-1})_{\#}\mu_{t_0}=\Pi$ and  $e_{t_0}^{-1}(x)=\gamma_x(\cdot)=T_{t_0}^{(\cdot)}(x)$ is given by $\gamma_x(s)=\exp_x(-(t_0-s)\nabla \phi_{t_0}(x))$  for every $x\in M_{t_0}$
where $\phi_{t_0}:\bar B_R\rightarrow \mathbb \mathbb R$ is the Hamilton-Jacobi shift of a $\frac{1}{2}d^2$-convex function $\phi:\bar B_R\rightarrow \mathbb R$. The set $M_t$ is the set of differentiability of $\phi_t$.

Since the map $(e_0,e_1)\circ e_{t_0}^{-1}:M_t\rightarrow  M\times M$ is Lipschitz continuous, the function $$x\in \bar B_R\mapsto l_{t_0}(x):=L(e_{t_0}^{-1}(x))=L(\gamma_x)=d_M(\gamma_x(0),\gamma_x(1))=|\nabla\phi_{t_0}|(x)$$ is Lipschitz continuous as well.
Hence $U:= l_{t_0}^{-1}((0,\infty))\cap B_R$ is an open set and $\vol_M(U)>0$. 
We also recall from paragraph 3 in  the proof of Theorem \ref{smoothcase} that $M'_{t_0}\subset B_R$ where $\phi_{t_0}$ is twice differentiable, is a set of full $\m$-measure in $B_R$. In particular, for $x\in M'_{t_0}\cap U$ there exists a unique gradient $\nabla \phi_{t_0}(x)$ and $\nabla\phi_{t_0}(x)\neq 0$. $M'_{t_0}\cap U$ is a measurable set.

\begin{theorem} Let $M$ be an $n$-dimensional Riemannian manifold and let $\phi: M \rightarrow \mathbb R$ be Lipschitz continuous.
\begin{itemize}
\item[(i)] (Coarea formula): Let $A\subset M$ be $\mathcal H^n$-measurable. Then, 
$A\cap \phi^{-1}(y)$ is $\mathcal H^{n-1}$-measurable for $\mathcal L^1$-a.e. $y\in \mathbb{R}$, the map $y\mapsto \mathcal H^{n-1}(A\cap \phi^{-1}(y))$ is $\mathcal L^{1}$-measurable, and it holds
\begin{align*}
\int_A |\nabla \phi| (x) d\mathcal H^n(x) = \int_{\mathbb R} \mathcal H^{n-1}(A\cap f^{-1}(y)) d\mathcal L^1(y).
\end{align*}
\item[(ii)] (Level set integration): Let $g$ be $\mathcal H^n$-integrable. Assume $\essinf |\nabla \phi|>0$. Then,
$g|_{\phi^{-1}(y)}$ is $\mathcal H^{n-1}$-integrable for $\mathcal L^1$-a.e. $y\in \mathbb{R}$, and
\begin{align*}
\int_{\left\{ \phi\geq t\right\}} g(x) |\nabla \phi| (x)\mathcal H^n(x)= \int_{t}^\infty \left[\int_{\left\{\phi=s\right\}} g(x)d \mathcal H^{n-1}(x) \right] d s.
\end{align*}
\end{itemize}
\end{theorem}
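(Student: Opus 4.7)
The plan is to deduce both parts from Federer's classical coarea formula for Lipschitz functions on Euclidean space, adapted to the Riemannian setting by a standard chart-and-partition-of-unity argument.

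For (i), I would first reduce to the Euclidean case. Cover $M$ by a countable atlas $\{(\Omega_i,\varphi_i)\}$ of smooth coordinate charts $\varphi_i:\Omega_i\to V_i\subset \R^n$ on which the Riemannian metric $g$ is represented by a positive-definite matrix $(g_{kl})$ with $d\vol_M=\sqrt{\det g}\,d\mathcal{L}^n$ in local coordinates, and on which the intrinsic Hausdorff measure $\mathcal{H}^{n-1}$ of a level set is comparable to the Euclidean $\mathcal{H}^{n-1}$ of its chart image by a smooth Jacobian determined by $g$. Since $\phi\circ\varphi_i^{-1}$ is Lipschitz on $V_i$, Federer's Euclidean coarea formula applies on each chart. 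Summing over $i$ via a partition of unity subordinate to $\{\Omega_i\}$, and correctly tracking how $|\nabla \phi|$ differs from its Euclidean-coordinate expression through the metric, yields the Riemannian form stated in (i). The measurability assertions transfer along the same reduction, since the analogous measurability is classical for Lipschitz functions on $\R^n$.

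For (ii), I would first upgrade (i) to the following weighted version, valid for any $\mathcal{H}^n$-integrable $g\geq 0$:
\begin{equation*}
\int_A g(x)|\nabla\phi|(x)\,d\mathcal{H}^n(x)=\int_{\R}\int_{A\cap\phi^{-1}(s)}g(x)\,d\mathcal{H}^{n-1}(x)\,ds.
\end{equation*}
This is the standard strengthening obtained by applying (i) to the superlevel sets $A\cap\{g>\lambda\}$ and integrating in $\lambda$ via Cavalieri, or equivalently by approximating $g$ by simple functions and passing to the limit with monotone convergence. Specializing to $A=\{\phi\geq t\}$ and noting that $\{\phi\geq t\}\cap\phi^{-1}(s)$ equals $\phi^{-1}(s)=\{\phi=s\}$ for $s\geq t$ and is empty otherwise produces the asserted identity in (ii). The assumption $\essinf|\nabla\phi|>0$ serves to guarantee that $|\nabla\phi|$ can be harmlessly absorbed or divided out and that, for $\mathcal{L}^1$-a.e.\ $s$, the level set $\phi^{-1}(s)$ is $(n-1)$-rectifiable with $g$ being $\mathcal{H}^{n-1}$-integrable on it, so the right-hand side is unambiguously defined.

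The main obstacle is not conceptual---both statements are classical---but lies in the bookkeeping: identifying the intrinsic Riemannian $\mathcal{H}^{n-1}$ on level sets with its Euclidean chart image through the correct metric Jacobian, and handling uniformly across the chart decomposition the $\mathcal{L}^1$-null set of bad values of $s$ where the Euclidean coarea slicing degenerates. In our application these subtleties are inessential because $\phi_{t_0}$ will be semiconvex with nondegenerate gradient on the relevant set $M'_{t_0}\cap U$, which brings us well inside the regime where both formulas apply verbatim.
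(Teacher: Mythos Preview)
Your approach is correct and standard, but there is nothing to compare against: the paper does not prove this theorem. It is simply recalled as a classical result (the Riemannian coarea formula and its weighted consequence) and then applied in the subsequent lemmas. Your reduction to Federer's Euclidean coarea formula via charts and a partition of unity, followed by the simple-function/Cavalieri upgrade to obtain the weighted version, is exactly the textbook route one would take if a proof were required.
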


We  apply the coarea formula to $\phi=\phi_{t_0}$ and 
$A=M'_{t_0}\cap U$. It follows that for $\mathcal L^1$-a.e. $a\in \mathbb{R}$ the set 
$\{\phi_{t_0}=a\}\cap M'_{t_0}\cap U$ is $\mathcal H^{n-1}$-measurable. In other words, there exists a set $Q\subset \mathbb R$ of full $\mathcal L^1$-measure such that $\{\phi_{t_0}=a\}\cap M'_{t_0}\cap U$ is $\mathcal H^{n-1}$-measurable for all $a\in Q$.

\begin{lemma}\label{lem:coarea} Let $g:M\rightarrow [0,\infty)$ be measurable. Then it holds
\begin{align*}
\int_{M_{t_0}'\cap U} g d\vol_M= \int_Q\int_{\{\phi_{t_0}=a\}} 1_{ M_{t_0}'\cap U} g \frac{1}{|\nabla \phi_t|} d\mathcal H^{n-1} da.
\end{align*}
\end{lemma}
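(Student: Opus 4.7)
The plan is a straightforward application of the coarea formula (part (i) of the cited theorem), upgraded to a weighted form by a monotone-class argument and specialized to the weight $h=g/|\nabla\phi_{t_0}|$ on $U$.

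First I would upgrade the stated unweighted coarea formula to its weighted version: for every non-negative $\mathcal H^n$-measurable function $h:M\to[0,\infty]$ and every $\mathcal H^n$-measurable $A\subset M$,
\[
\int_A h(x)\,|\nabla\phi_{t_0}|(x)\,d\mathcal H^n(x) \;=\; \int_\mathbb{R} \int_{A\cap \phi_{t_0}^{-1}(a)} h(x)\,d\mathcal H^{n-1}(x)\,da.
\]
For $h=1_E$ this follows by applying part (i) to $A\cap E$; one then extends to non-negative simple functions by linearity and to general non-negative measurable $h$ by monotone convergence. The $\mathcal L^1$-measurability of $a\mapsto \mathcal H^{n-1}(A\cap\phi_{t_0}^{-1}(a))$ asserted in (i) is preserved under these extensions, so the inner slice integrals are measurable functions of $a$.

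Next I would set $A=M'_{t_0}\cap U$. By construction $U=l_{t_0}^{-1}((0,\infty))\cap B_R$, so the slope $|\nabla\phi_{t_0}|$ is strictly positive on $A$, and thus $h:=1_{M'_{t_0}\cap U}\,g/|\nabla\phi_{t_0}|$ is a well-defined non-negative $\mathcal H^n$-measurable function (extended by $0$ off $U$). Applying the weighted coarea formula, and using that $\mathcal H^n=\vol_M$ on the Riemannian manifold $M$, yields
\[
\int_{M'_{t_0}\cap U} g\,d\vol_M \;=\; \int_M h\,|\nabla\phi_{t_0}|\,d\mathcal H^n \;=\; \int_\mathbb{R} \int_{\{\phi_{t_0}=a\}} 1_{M'_{t_0}\cap U}\,\frac{g}{|\nabla\phi_{t_0}|}\,d\mathcal H^{n-1}\,da.
\]
Restricting the outer integral to the full $\mathcal L^1$-measure set $Q$, on which the slice $\{\phi_{t_0}=a\}\cap M'_{t_0}\cap U$ is $\mathcal H^{n-1}$-measurable and the inner integral therefore makes classical sense, gives the identity in the lemma.

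The argument is essentially mechanical; the only step that needs any thought is the monotone-class bootstrap from the unweighted coarea formula to its weighted variant. A minor subtlety worth noting is that $|\nabla\phi_{t_0}|$ need not be bounded below on $U$, so $h$ need not lie in $L^1(\vol_M)$ even when $g$ does; however, both sides of the claimed identity are $[0,\infty]$-valued integrals of non-negative functions, so no integrability hypothesis is required and no localization to $U_k=\{|\nabla\phi_{t_0}|>1/k\}$ is necessary. Alternatively one could first establish the identity on each $U_k$ directly via part (ii) of the theorem (where $\essinf_{U_k}|\nabla\phi_{t_0}|\geq 1/k>0$) and then send $k\to\infty$ by monotone convergence.
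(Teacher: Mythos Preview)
Your argument is correct. The monotone-class bootstrap from the unweighted coarea formula (part (i)) to its weighted form is standard and works exactly as you describe; since $|\nabla\phi_{t_0}|>0$ on $U$, the weight $h=1_{M'_{t_0}\cap U}\,g/|\nabla\phi_{t_0}|$ is well-defined and non-negative, and no integrability is needed.

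For comparison, the paper does not go through part (i) at all but instead takes precisely the alternative route you sketch in your final sentence: it localizes to $U_\eta=\{|\nabla\phi_{t_0}|>\eta\}$, applies part (ii) (level set integration) directly---which is legitimate there because $\essinf_{U_\eta}|\nabla\phi_{t_0}|\geq\eta>0$---and then lets $\eta\downarrow 0$ via monotone convergence, using $U_\eta\uparrow U$. Your primary approach has the mild advantage of avoiding the artificial truncation altogether, at the cost of spelling out the bootstrap; the paper's approach is a one-liner once part (ii) is available. Either way the content is the same.
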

\begin{proof}
We consider $U_{\eta}=\left\{|\nabla\phi_{t_0}|(x)> \eta\right\}$. Then by  level set integration we get
\begin{align*}
\int_{M_{t_0}'\cap U_\eta} g d\vol_M= \int_Q \int_{\{\phi_t=a\}}1_{ M'_{t}\cap U_\eta}g\frac{1}{|\nabla \phi_t|} d\mathcal H^{n-1} da.
\end{align*}
Since $U_\eta \uparrow U=\bigcup_{\lambda>0}U_\lambda$ for $\eta\downarrow 0$, by an application of the monotone convergence theorem we obtain the desired statement.
\end{proof}
\begin{lemma} 
Let $a\in Q$.
There exists a  countably $\mathcal H^{n-1}$-rectifiable set $\Sigma_a\subset \{\phi_{t_0}=a\}$ such that $\mathcal H^{n-1}((\left\{\phi_{t_0}=a\right\}\cap M'_{t_0}\cap U)\backslash \Sigma_a)=0$.
\end{lemma}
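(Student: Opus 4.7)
The plan is to exploit the semi-convexity of the Kantorovich potential $\phi_{t_0}$ to cover $M'_{t_0}\cap U$ by countably many pieces on which $\phi_{t_0}$ coincides with a $C^2$ function, apply the classical implicit function theorem to each piece, and then absorb the remaining null set into an $\mathcal H^{n-1}$-null set via the coarea formula.

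First, I would work in a countable atlas of Riemannian charts on $M$, in which $\phi_{t_0}$ becomes a Lipschitz, semi-convex function on an open subset of $\mathbb R^n$. By Alexandrov's theorem such a function admits a pointwise second-order Taylor expansion at $\mathcal L^n$-a.e.\ point, and this is precisely (the chart image of) $M'_{t_0}$. A second-order Lusin/Whitney-type theorem then yields closed sets $E_k\subset M'_{t_0}$ and $C^2$ functions $g_k\colon\mathbb R^n\to\mathbb R$ such that $\phi_{t_0}=g_k$ and $\nabla\phi_{t_0}=\nabla g_k$ on $E_k$, with $\vol_M\bigl(M'_{t_0}\setminus\bigcup_k E_k\bigr)=0$.

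Next, on each piece $E_k\cap U$ the gradient $\nabla g_k=\nabla\phi_{t_0}$ is nonzero, so by the classical implicit function theorem $\{g_k=a\}$ is locally an embedded $C^2$ hypersurface, hence countably $\mathcal H^{n-1}$-rectifiable. Therefore $E_k\cap U\cap\{\phi_{t_0}=a\}\subset\{g_k=a\}$ inherits this rectifiability, and so does $\bigcup_k\bigl(E_k\cap U\cap\{\phi_{t_0}=a\}\bigr)$ by countability.

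To handle the remainder, set $N:=(M'_{t_0}\cap U)\setminus\bigcup_k E_k$; by construction $\mathcal H^n(N)=0$. Applying the coarea formula to the Lipschitz function $\phi_{t_0}$ on $N$ yields
\[
0=\int_N|\nabla\phi_{t_0}|\,d\mathcal H^n=\int_{\mathbb R}\mathcal H^{n-1}\bigl(N\cap\phi_{t_0}^{-1}(a)\bigr)\,d\mathcal L^1(a),
\]
so after refining $Q$ to include the full-measure set on which $\mathcal H^{n-1}(N\cap\phi_{t_0}^{-1}(a))=0$, one obtains the desired $\Sigma_a:=\bigcup_k\bigl(E_k\cap\{\phi_{t_0}=a\}\bigr)$ for all $a\in Q$. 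The main obstacle is the second-order Lusin decomposition: one must ensure the $C^2$ extensions share not only the values but also the gradients of $\phi_{t_0}$ on $E_k$, so that non-vanishing of $\nabla\phi_{t_0}$ on $U$ genuinely transfers to $\nabla g_k$ and the implicit function theorem can be invoked.
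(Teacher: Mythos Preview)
Your argument is correct, but it follows a genuinely different route from the paper. The paper does not pass through a $C^{2}$ Lusin/Whitney approximation at all: it invokes directly a \emph{non-smooth implicit function theorem} for semi-convex functions (cited from the appendix of McCann's paper), which says that if $\phi$ is semi-convex and $\nabla\phi(x_0)\neq 0$, then $\{\phi=a\}\cap B_\delta(x_0)$ already has finite $\mathcal H^{n-1}$-measure and is countably $\mathcal H^{n-1}$-rectifiable for some $\delta>0$. Since every $x_0\in\{\phi_{t_0}=a\}\cap M'_{t_0}\cap U$ has a unique nonzero gradient, these balls form a Vitali covering, and the Vitali covering theorem for $\mathcal H^{n-1}$ extracts a countable subfamily whose union is the desired $\Sigma_a$.

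The trade-off is this. Your approach uses only textbook ingredients (Alexandrov, $C^2$ Whitney extension, classical implicit function theorem, coarea), which makes it more self-contained; on the other hand it only yields the conclusion for $\mathcal L^1$-a.e.\ $a$, forcing you to shrink $Q$ to a smaller full-measure set. The paper's route, by contrast, appeals to a more specialized lemma but establishes rectifiability for \emph{every} $a\in Q$ as originally defined, with no need to discard further values or to invoke coarea a second time. For the downstream applications in the paper (integration over $Q$ with respect to $da$) your refinement of $Q$ is harmless, so both proofs serve equally well.
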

\begin{proof}
Recall the following theorem that appears in the appendix of \cite[Theorem 17]{mccannexistence} where the following theorem is stated for the case $M=\mathbb R^n$.

\begin{theorem}[Non-smooth implicite function theorem]\label{th:nsift}
Let $\phi:M \rightarrow \mathbb R$ be semi-convex. If $\nabla \phi(x_0)\neq 0$ for some $x_0\in M$ and $\phi(x_0)=a$, then there exists $\delta>0$ such that $\mathcal H^{n-1}(\left\{\phi=a\right\}\cap B_{\delta}(x_0))<\infty$ and $\left\{\phi=a\right\}\cap B_{\delta}(x_0)$ is countably $\mathcal H^{n-1}$-rectifiable.
\end{theorem}
\noindent
We pick $x_0\in \{\phi_{t_0}=a\}\cap M_{t_0}'\cap U$ for $a\in Q$. Since $x_0\in M'_{t_0}$, $\phi_{t_0}$ is twice differentiable in $x_0$. Therefore there exists
a unique, non-zero gradient $\nabla \phi_{t_0}(x_0)$. By Theorem \ref{th:nsift} for every $\delta>0$ that is sufficiently small $B_{\delta}(x_0)\cap \{\phi_{t_0}=a\}$ is $\mathcal H^{n-1}$-rectifiable. The collection of all such balls $B_\delta(x_0)$ with $x_0\in \{\phi_{t_0}=a\}\cap M_{t_0}'\cap U$ is a Vitali covering of $\{\phi_{t_0}=a\}\cap M_{t_0}'\cap U$.
Therefore, the Vitali covering theorem for $\mathcal H^{n-1}$ implies that we can choose a countable subfamily $\{B_{\delta_i}(x_i)\}_{i\in\mathbb{N}}$ that still covers $\{\phi_{t_0}=a\}\cap M_{t_0}'\cap U$ up to a set of $\mathcal{H}^{n-1}$-measure $0$. 
We define $\Sigma_a:=\bigcup_{i\in \mathbb{N}}B_{\delta_i}(x_i)$. By construction $\Sigma_a$ is countably $\mathcal H^{n-1}$-rectifiable. This yields the claim.
\end{proof}

\bigskip

We define the map $F:\Sigma_a\times [0,1]\rightarrow M$ via $(x,t)\mapsto \gamma_x(t)=T_{t_0}^t(x)$. Recall that $F(x,t_0)=x$ for $x\in \Sigma_a$.

\begin{lemma}
{$F$ is Lipschitz continuous. }
\end{lemma}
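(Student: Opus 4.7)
The plan is to prove Lipschitz continuity of $F$ by splitting into the two variables via the triangle inequality
\[
d_M(F(x,t),F(y,s))\;\leq\; d_M(F(x,t),F(y,t))\;+\;d_M(F(y,t),F(y,s)),
\]
and bounding each term uniformly. Throughout I equip $\Sigma_a\times[0,1]$ with the product metric.

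For the first, spatial, term I would appeal to the Monge-Mather principle. Both $F(x,\cdot)=\gamma_x$ and $F(y,\cdot)=\gamma_y$ are constant speed minimizing geodesics on $[0,1]$ with $\gamma_x(t_0)=x$ and $\gamma_y(t_0)=y$, well defined pointwise because $x,y\in\Sigma_a\subset M_{t_0}'\subset M_{t_0}$, where the gradient $\nabla\phi_{t_0}$ exists and is unique. The key observation is that the pair $(\gamma_x(0),\gamma_x(1))$, $(\gamma_y(0),\gamma_y(1))$ lies in a $\tfrac{1}{2}d^2$-monotone set: this is inherited from the cyclical monotonicity of the $c$-subdifferential of the $\tfrac{1}{2t_0}d^2$-concave function $\phi_{t_0}$, which encodes all McCann geodesics $z\mapsto\exp_z(-(t_0-\cdot)\nabla\phi_{t_0}(z))$ on $M_{t_0}$. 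Since $l_{t_0}$ is bounded on $\bar B_R$, all these geodesics stay in a fixed compact set $E\subset M$. Monge-Mather then yields, with a constant $C_E$ depending only on $E$,
\[
d_M(F(x,t),F(y,t))\;\leq\;\frac{C_E}{\min(t_0,1-t_0)}\,d_M(\gamma_x(t_0),\gamma_y(t_0))\;=\;\frac{C_E}{\min(t_0,1-t_0)}\,d_M(x,y)
\]
for every $t\in[0,1]$; crucially, this constant is independent of $t$.

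For the second, temporal, term I would use the constant-speed geodesic property of $\gamma_y$ directly:
\[
d_M(F(y,t),F(y,s))\;=\;|t-s|\,L(\gamma_y)\;=\;|t-s|\,l_{t_0}(y)\;\leq\;|t-s|\,\|l_{t_0}\|_{L^\infty(\bar B_R)}.
\]
The uniform bound $\|l_{t_0}\|_\infty<\infty$ follows from the Lipschitz continuity of $l_{t_0}=|\nabla\phi_{t_0}|$ on the compact set $\bar B_R$, which was established right before the statement of the lemma. Combining the two estimates gives Lipschitz continuity of $F$ on $\Sigma_a\times[0,1]$ with constant $\max\bigl(C_E/\min(t_0,1-t_0),\,\|l_{t_0}\|_\infty\bigr)$.

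The main obstacle I anticipate is the \emph{pointwise} (not merely $\mu_{t_0}$-a.e.) verification that the McCann geodesics $\gamma_x$, $x\in\Sigma_a$, form a monotone family to which Monge-Mather applies, since $\Sigma_a$ may have $\mu_{t_0}$-measure zero. This is why I cannot simply quote the identity $T_{t_0}^t=\tilde T_{t_0}^t$ $\mu_{t_0}$-a.e.\ and instead must lean on the pointwise cyclical monotonicity of the subdifferential of the semi-concave function $\phi_{t_0}$. Once that structural fact is in hand, the rest of the argument is routine.
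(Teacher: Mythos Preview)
Your proposal is correct and follows essentially the same approach as the paper: triangle inequality to split into spatial and temporal parts, Monge--Mather for the spatial variation (with constant $C_E/\min(t_0,1-t_0)$), and the constant-speed geodesic property for the temporal variation. The paper's proof is terser --- it simply asserts that $\{(\gamma_x(0),\gamma_x(1)):x\in\Sigma_a\}$ is $\tfrac12 d^2$-monotone and writes the resulting bound as $\frac{C_E}{\min(t_0,1-t_0)}d_M(x,y)+R|t-s|$ --- but the content is the same; your more careful justification of pointwise monotonicity via the $c$-subdifferential of $\phi_{t_0}$ is exactly what fills in the paper's assertion, and is needed precisely because $\Sigma_a$ may be $\mu_{t_0}$-null.
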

{\it Proof of the lemma.}
Not that $\{(\gamma_x(0),\gamma_x(1)): x\in \Sigma_a\}$ is a $\frac{1}{2}d^2$-monotone set. Hence,  we see by the Monge-Mather principle that 
\begin{align*}
d_{\sM}(F(x,t),F(y,s))
& \leq \frac{C_E}{\min(1-t_0,t_0)} d_{\sM}(x,y) + R |t-s|.
\end{align*}
This proves the claim.\qed 
\\

By Rademacher's theorem  there exists a measurable set $N\subset \Sigma_a\times [0,1]$ such that 
$\mathcal{H}^n_{\Sigma_a\times [0,1]}(N)=0$ and
the differential $DF(x,t)$ exists for $\forall (x,t)\in (\Sigma_a \times [0,1])\backslash N$. In particular,
$DF{(x,t)}v$ exists for every $(x,t)\in (\Sigma\times [0,1])\backslash N$ and $v\in T_x\Sigma_a$.

Recall that $M_{t_0}'$ is the set where $\phi_{t_0}$ is twice differentiable. If $x\in M_{t_0}'$ then
$DT_{t_0}^t(x)=A_x(t)$ exists for all $t\in [0,1]$. Hence
$$DF(x,t)v = DT_{t_0}^tv=A_x(t)v, \ \ v\in T_x\Sigma_a$$ $$\forall(x,t)\in
(\Sigma_a'\times [0,1])\backslash N=\mathcal S$$
where $\Sigma_a'=\Sigma_a\cap \{\phi_{t_0}=a\}\cap M_{t_0}'\cap U\subset \Sigma_a$. 

The vectorfield $t\in [0,1]\mapsto A_x(t)v$ is the Jacobi field $J$ with  initial conditions $J(0)=v$ and $J'(0)=\nabla^2\phi_{t_0} v$.
Moreover
$$DF(x,t)\partial_t= \dot{\gamma}_x(t)\ \forall (x,t)\in \mathcal S.$$

\begin{proposition}\label{prop:areaformula}
The following holds
\begin{align*}
\mathcal H^n_M \left( F(\mathcal S)\right)=
 \int_0^1 \int_{\Sigma_a'}|\det B_x(t)| |\dot \gamma_x(t)| d\mathcal H^{n-1}(x) dt 
\end{align*}
If $g$ is a $\vol_M$-integrable, non-negative function, it holds
\begin{align*} \int_{F(S)} g(p) d \vol_M(p)=
 \int_0^1 \int_{\Sigma_a'}g(\gamma_x(t))|\det B_x(t)| |\dot \gamma_x(t)| d\mathcal H^{n-1}(x) dt.
\end{align*}
\end{proposition}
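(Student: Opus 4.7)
The plan is to apply the area formula to the Lipschitz map $F:\Sigma_a\times[0,1]\to M$, computing its tangential Jacobian explicitly and using injectivity of the optimal transport map to replace the multiplicity function with $1$.

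First, I would verify that $\Sigma_a\times[0,1]$ is countably $\mathcal H^n$-rectifiable: since $\Sigma_a$ is countably $\mathcal H^{n-1}$-rectifiable by the previous lemma and $[0,1]$ is smooth, the product is countably $\mathcal H^n$-rectifiable in the product space. The map $F$ is Lipschitz by the preceding lemma, so the area formula for Lipschitz maps on rectifiable sets applies.

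Next I would compute the differential $DF(x,t)$ for $(x,t)\in\mathcal S$. Since $x\in\Sigma_a'\subset M_{t_0}'$, the potential $\phi_{t_0}$ is twice differentiable at $x$ with $\nabla\phi_{t_0}(x)\neq 0$. The identity $\dot\gamma_x(t_0)=\nabla\phi_{t_0}(x)$ combined with $\Sigma_a'\subset\{\phi_{t_0}=a\}$ forces $T_x\Sigma_a=\nabla\phi_{t_0}(x)^\perp=\dot\gamma_x(t_0)^\perp$. Hence for $v\in T_x\Sigma_a$ we have $DF(x,t)v=A_x(t)v=B_x(t)v\in\dot\gamma_x(t)^\perp$, while $DF(x,t)\partial_t=\dot\gamma_x(t)$. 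Since $B_x(t)v\perp\dot\gamma_x(t)$, the decomposition is orthogonal at the target, and the tangential Jacobian factorizes as
\begin{equation*}
J_F(x,t)=|\det B_x(t)|\cdot|\dot\gamma_x(t)|.
\end{equation*}

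Then I would invoke essentially one-to-one-ness of $F$ on $\mathcal S$. For fixed $t\in[0,1]$ the slice $F(\cdot,t)=T_{t_0}^t$ is injective on $M_{t_0}'\cap U$: if $\gamma_x(t)=\gamma_y(t)$ then the Monge–Mather principle applied to the monotone pair $(\gamma_x(0),\gamma_x(1)),(\gamma_y(0),\gamma_y(1))$ forces $x=y$. For fixed $x\in\Sigma_a'\cap U$ the map $F(x,\cdot)$ is injective because $\gamma_x$ has constant, strictly positive speed $l_{t_0}(x)>0$. Finally, a crossing $\gamma_x(t)=\gamma_y(s)$ with $x\neq y$ and $t\neq s$ would force $\gamma_x$ and $\gamma_y$ to belong to the same maximal transport geodesic (gradient flow line of the Hamilton–Jacobi shifts $\phi_\bullet$), which cannot happen since both $x$ and $y$ lie on the same level set $\{\phi_{t_0}=a\}$ and the flow is strictly monotone in $\phi_{t_0}$ along each geodesic. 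Thus $F$ is injective on $\mathcal S$ up to an $\mathcal H^n$-negligible set.

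Now the area formula combined with Fubini yields
\begin{equation*}
\mathcal H^n(F(\mathcal S))=\int_{\mathcal S}J_F\,d\mathcal H^n=\int_0^1\!\!\int_{\Sigma_a'}|\det B_x(t)||\dot\gamma_x(t)|\,d\mathcal H^{n-1}(x)\,dt,
\end{equation*}
which is the first claim (here $\mathcal H^n_M=\vol_M$ on the Riemannian manifold). The second identity is obtained by the standard extension of the area formula to integrable functions: for non-negative measurable $g$ one has
\begin{equation*}
\int_{F(\mathcal S)}g\,d\vol_M=\int_{\mathcal S}(g\circ F)\,J_F\,d\mathcal H^n,
\end{equation*}
and a second application of Fubini gives the stated formula.

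The main obstacle will be the last injectivity claim $F(x,t)\neq F(y,s)$ for $(x,t)\neq(y,s)$ with $t\neq s$; it uses smooth Riemannian optimal-transport regularity (non-crossing of transport geodesics for distinct $L^2$-minimizing segments) rather than a bare Monge–Mather comparison. Everything else is mechanical: rectifiability of the product, Lipschitz regularity already established, and the computation of $J_F$ from Jacobi field calculus.
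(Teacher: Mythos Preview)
Your overall strategy coincides with the paper's: apply the area formula to the Lipschitz map $F$ on the rectifiable set $\Sigma_a\times[0,1]$, compute the tangential Jacobian as $|\det B_x(t)|\,|\dot\gamma_x(t)|$ via the orthogonal splitting $T_x\Sigma_a\oplus\mathbb R\partial_t$, and then argue that the multiplicity function $\mathcal H^0(\mathcal S\cap F^{-1}(p))$ equals $1$ for $\vol_M$-a.e.\ $p$. The Jacobian computation and the reduction to an injectivity statement are identical to the paper's.

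The one substantive difference is how the injectivity (partition) property is obtained. Your fixed-$t$ and fixed-$x$ arguments are fine, but the cross case ``$\gamma_x(t)=\gamma_y(s)$ with $t\neq s$ forces $\gamma_x,\gamma_y$ to lie on the same maximal transport geodesic'' is not justified by what you wrote: two transport geodesics from a $\tfrac12 d^2$-monotone set can in principle intersect transversally at distinct times, and the claim that ``the flow is strictly monotone in $\phi_{t_0}$ along each geodesic'' is not correct as stated (it is $t\mapsto\phi_t(\gamma_x(t))$, not $t\mapsto\phi_{t_0}(\gamma_x(t))$, that has a clean monotone structure). The paper closes this gap by invoking Cavalletti's observation \cite[Proposition 4.1]{cavdec} that a $\tfrac12 d^2$-monotone set is automatically $d$-monotone, together with the standard consequence that the associated transport segments partition their union up to a $\vol_M$-null set $L$. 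This gives $\mathcal H^0(\mathcal S\cap F^{-1}(p))=1$ for $p\in F(\mathcal S)\setminus L$ without any ad hoc Hamilton--Jacobi reasoning. You correctly flagged this as ``the main obstacle''; the clean fix is precisely the $d$-monotonicity route.
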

\begin{proof}
Recall the area formula.
\begin{theorem}[Area formula] Let $\mathcal R$ be a $\mathcal H^n$-rectifiable set and let $F: \mathcal R \rightarrow  M$ be Lipschitz continuous. 
\begin{itemize}
\item[(i)]
Let $A\subset \mathcal R$ be $\mathcal H^n$-measurable. Then
\begin{align*}
\int_A JF(x) d\mathcal H^n(x) =  \int_{M} \mathcal H^0(A\cap F^{-1}(y)) d\mathcal H^n(y).
\end{align*}
\item[(ii)] (Change of variable formula): Let $g: \mathcal R\rightarrow \mathbb R$ be $\mathcal H^n$-integrable. Then 
\begin{align*}
\int g(x) JF(x) d\mathcal H^n(x) =  \int_{M} \left[ \sum_{x\in f^{-1}(y)} g(x)\right] d\mathcal H^n(y).
\end{align*}
\end{itemize}
$JF(x)= |\det DF(x)|$ denotes the Jacobian of the map $F$. 
\end{theorem}
In our case we have $\mathcal R= \Sigma_a\times [0,1]$, $F:=F(x,t)=\gamma_x(t)$ and $A=\mathcal S$. For $(x,t)\in \mathcal S$ we observe that 
$DF(x,t)v=DA_t(x)v$ and $DF(x,t)\frac{d}{dt}=\dot{\gamma}_x(t)$ are perpendicular since $DA_x(t)v$ is a Jacobi field along $\gamma_x$ with initial value $v\in T_x\Sigma_a\perp \gamma_x(t_0)$.
Hence 
$$JF(x,t)=|\det DF(x,t)|= |\det A_x(t)|=|\det B_x(t)||\dot{\gamma}_x(t)|$$
where $B_x(t)= A_x(t)|_{T_x\Sigma_a}: T_x\Sigma_a\rightarrow T_{\gamma_x(t)}\Sigma_a$.
By the area formula it follows
\begin{align*}
 \int_0^1 \int_{\Sigma_a'}|\det B_x(t)| |\dot \gamma_x(t)| d\mathcal H^{n-1}(x) dt&= \int_{\mathcal S} JF(x) d\mathcal H^{n}(x)\\
 &= \int_{M} \mathcal{H}^{0}(\mathcal{S}\cap F^{-1}(p)) d \vol_M(p).
\end{align*}
Of course the formula also holds when $\Sigma_a'=\emptyset$. In this case all integrals become $0$.
\\

Note that $e_{t_0}^{-1}(\Sigma_a)\subset e_{t_0}^{-1}(M)=\Gamma_a$ is $\frac{1}{2}d^2$--monotone and hence contained in the $\frac{1}{2}d^2$-differential of the $\frac{1}{2}d^2$-convex function $t\phi_t$. 
One of the key observations in \cite{cavdec} is that $\Gamma_a$ is a $d$-monotone set (\cite[Proposition 4.1]{cavdec}). And as corollary of this observation Cavalletti obtains that
the family of transport segments forms a partition of $e(\Gamma_a\times [0,1])\subset M$ up to a set $L$ of $\vol_M$-measure $0$.

Hence, we obtain
\begin{align*}
 \int_0^1 \int_{\Sigma_a'}|\det B_x(t)| |\dot \gamma_x(t)| d\mathcal H^{n-1}(x) dt= \int_{F(\mathcal S)\backslash L} d \vol_M.
\end{align*}
The second formula can be derived similar.
\end{proof}

\section{Displacement convexity}\label{section:displacement}
\begin{theorem}\label{resultA}
Let $(M,o)$ be a smooth, normalized, pointed metric measure space that satisfies $CD(\kappa,N)$ for $\kappa\in C(X)$ and $N\geq 2$. Set $B_R(o)=B_R$ for all $R>0$.
Let $K\in \R$, $p>{\textstyle \frac{N}{2}}$ and $R\geq 1$ such that $k_{[\sM,o]}(p,K,2R)<\infty.$
Let 
$\mu_0,\mu_1\in\mathcal{P}^2(\m_{\sM})$ with $\mu_i(B_{\sR})=1$ and $[\mu_i]_{ac}=\rho_i$, $i=0,1$.
Let $\Pi$ be the dynamical optimal plan between $\mu_0$ and $\mu_1$.  Let $\epsilon>0$ and assume $\Pi(\{\gamma\in \mathcal G(M): L(\gamma)\leq \pi_{K/(N-1)}-\epsilon\})=1$. Then
\begin{align}\label{inequalityA}
S_{\sN}((e_t)_{\#}\Pi)&\leq -\int\left[\tau_{\sK,\sN}^{(1-t)}(|\dot{\gamma}|)\rho_0(\gamma_0)^{-\frac{1}{N}}+\tau_{\sK,\sN}^{(t)}(|\dot{\gamma}|)\rho_1(\gamma_1)^{-\frac{1}{N}}\right]d\Pi(\gamma)\nonumber\\
&\hspace{0.7cm}+2\m_{\sM}(B_{2R}(o))^{\frac{1}{N}}\Lambda^{\frac{1}{N}}C^{\frac{1}{N(2p-1)}} k_{[M,o]}(p,K,2R)^{\frac{p}{N(2p-1)}} \ \forall t\in (0,1).
\end{align}
\end{theorem}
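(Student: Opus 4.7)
The plan combines three ingredients: the pointwise form of the smooth $CD(\kappa,N)$ inequality established in the proof of Theorem~\ref{smoothcase}, the $1$-dimensional comparison of Corollary~\ref{cor:disest} that trades the variable coefficient $\tau_{\kappa_\gamma,N}^{(t)}$ for the constant one $\tau_{K,N}^{(t)}$, and the Area/Co-area disintegration of Section~5 to convert integrals along transport geodesics into volume integrals over $B_{2R}(o)$.

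By Theorem~\ref{smoothcase} and its subsequent Remark, $\Pi$-a.e.\ geodesic $\gamma$ satisfies the pointwise density inequality
$$\rho_t(\gamma_t)^{-\tfrac{1}{N}} \;\geq\; \tau_{\kappa_\gamma^-,N}^{(1-t)}(|\dot\gamma|)\,\rho_0(\gamma_0)^{-\tfrac{1}{N}} + \tau_{\kappa_\gamma^+,N}^{(t)}(|\dot\gamma|)\,\rho_1(\gamma_1)^{-\tfrac{1}{N}},$$
which upon integration against $\Pi$ (using $(e_t)_\#\Pi=\mu_t$ and the change of variables, as in Step~7 of the proof of Theorem~\ref{smoothcase}) yields
$$-S_N(\mu_t\,|\,\m_M) \;\geq\; \int \bigl[\tau_{\kappa_\gamma^-,N}^{(1-t)}\rho_0^{-\tfrac{1}{N}} + \tau_{\kappa_\gamma^+,N}^{(t)}\rho_1^{-\tfrac{1}{N}}\bigr]\,d\Pi.$$
By the length hypothesis $L(\gamma)\leq\pi_{K/(N-1)}-\epsilon$, Corollary~\ref{cor:disest} applies pointwise with $\theta=|\dot\gamma|$, giving
$$\tau_{\kappa_\gamma^\pm,N}^{(t)}(|\dot\gamma|) \;\geq\; \tau_{K,N}^{(t)}(|\dot\gamma|) \;-\; \Lambda^{\tfrac{1}{N}}\alpha_\gamma^{\tfrac{1}{N(2p-1)}}\beta_\gamma^{\tfrac{2p-2}{N(2p-1)}},$$
with $\alpha_\gamma=C\,t\,|\dot\gamma|^{2p}\!\int_0^1(\kappa_\gamma^\pm(s|\dot\gamma|)-K)_-^p\sigma_{\kappa/(N-1)}^{(s)}(|\dot\gamma|)^{N-1}\,ds$ and $\beta_\gamma=\int_t^1\tau_{\kappa_\gamma^\pm,N}^{(s)}(|\dot\gamma|)^N\,ds$. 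Substituting produces the desired main term $-\int[\tau_{K,N}^{(1-t)}\rho_0^{-1/N}+\tau_{K,N}^{(t)}\rho_1^{-1/N}]d\Pi$ and leaves two error integrals (one against $\rho_0^{-1/N}$, one against $\rho_1^{-1/N}$) to be controlled.

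Each error integral is handled by a two-step H\"older argument. First, separating the density factor via H\"older with exponents $N$ and $N/(N-1)$,
$$\int \alpha_\gamma^{\tfrac{1}{N(2p-1)}}\beta_\gamma^{\tfrac{2p-2}{N(2p-1)}}\rho_i(\gamma_i)^{-\tfrac{1}{N}}d\Pi \;\leq\; \Bigl(\!\int\alpha_\gamma^{\tfrac{1}{2p-1}}\beta_\gamma^{\tfrac{2p-2}{2p-1}}d\Pi\Bigr)^{\!\tfrac{1}{N}}\!\Bigl(\!\int\rho_i(\gamma_i)^{-\tfrac{1}{N-1}}d\Pi\Bigr)^{\!\tfrac{N-1}{N}}\!.$$
For the density factor, pushing forward by $(e_i)_\#\Pi=\mu_i$ and applying H\"older against the probability measure $\mu_i$ supported in $B_R\subset B_{2R}(o)$ yields
$$\int\rho_i^{-\tfrac{1}{N-1}}d\Pi = \int_{B_R}\rho_i^{\tfrac{N-2}{N-1}}d\m_M \;\leq\; \m_M(B_{2R}(o))^{\tfrac{1}{N-1}},$$
contributing the prefactor $\m_M(B_{2R}(o))^{1/N}$. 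A second H\"older inequality with exponents $2p-1$ and $(2p-1)/(2p-2)$ splits the curvature excess from the distortion term, $\int\alpha_\gamma^{1/(2p-1)}\beta_\gamma^{(2p-2)/(2p-1)}d\Pi \leq (\int\alpha_\gamma d\Pi)^{1/(2p-1)}(\int\beta_\gamma d\Pi)^{(2p-2)/(2p-1)}$. Using $|\dot\gamma|\leq 2R$ to pull $C\,t\,(2R)^{2p}$ out of $\alpha_\gamma$, Proposition~\ref{prop:areaformula} and Lemma~\ref{lem:coarea}, combined with the Monge-Amp\`ere inequality from Step~5 of the proof of Theorem~\ref{smoothcase}, convert $\int\!\!\int_0^1(\kappa-K)_-^p(\gamma_s)\sigma_{\kappa/(N-1)}^{(s)}(|\dot\gamma|)^{N-1}ds\,d\Pi$ into a constant multiple of the volume integral $\int_{B_{2R}(o)}(\kappa-K)_-^p d\m_M=k_{[M,o]}(p,K,2R)^p/(2R)^{2p}$, while the parallel argument for $\int\beta_\gamma d\Pi$ produces a bounded factor. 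Tracking the exponents, each error piece is then bounded by $\m_M(B_{2R}(o))^{1/N}\Lambda^{1/N}C^{1/(N(2p-1))}k_{[M,o]}(p,K,2R)^{p/(N(2p-1))}$; summing the two contributions yields the factor~$2$ in \eqref{inequalityA}.

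The hard part is the last step, i.e.\ converting the geodesic-weighted integrals $\int\!\!\int_0^1(\kappa-K)_-^p(\gamma_s)\sigma_{\kappa/(N-1)}^{(s)}(|\dot\gamma|)^{N-1}ds\,d\Pi$ and $\int\!\!\int_t^1\tau_{\kappa,N}^{(s)}(|\dot\gamma|)^N ds\,d\Pi$ into volume integrals on $B_{2R}(o)$. The weights $\sigma^{(s)}_{\kappa/(N-1)}$ and $\tau^{(s)}_{\kappa,N}$ are built from the lower bound $\kappa$ rather than from the true transport Jacobian $\mathcal I_x$; the bridge is the Monge-Amp\`ere-type inequality $\mathcal I_x(s)^{1/(N-1)}\geq\sigma_{\kappa_\gamma^-/(N-1)}^{(1-s)}\mathcal I_x(0)^{1/(N-1)}+\sigma_{\kappa_\gamma^+/(N-1)}^{(s)}\mathcal I_x(1)^{1/(N-1)}$ from Step~5 of Theorem~\ref{smoothcase}, combined with the level-set parametrization of $\Pi$ through the Kantorovich potential $\phi_{t_0}$ described in Section~5. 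Executing this comparison quantitatively, and verifying that the volume prefactor extracts cleanly as $\m_M(B_{2R}(o))^{1/N}$ while the power on $k_{[M,o]}(p,K,2R)$ comes out to exactly $p/(N(2p-1))$, is the most delicate piece of accounting in the argument.
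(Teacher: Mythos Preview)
Your overall architecture matches the paper's: start from the pointwise $CD(\kappa,N)$ inequality, apply Corollary~\ref{cor:disest} along each geodesic, integrate against $\Pi$, then use the Area/Coarea machinery of Section~5 to turn geodesic integrals into volume integrals. But the H\"older decomposition you propose breaks the argument at a crucial point.

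You separate the density factor $\rho_i(\gamma_i)^{-1/N}$ completely via a first H\"older with exponents $N$ and $N/(N-1)$, leaving $\int\alpha_\gamma\,d\Pi$ and $\int\beta_\gamma\,d\Pi$ with no density weight. These unweighted integrals cannot be controlled by the tools you cite. For $\int\beta_\gamma\,d\Pi=\int\!\int_t^1\tau_{\kappa,N}^{(s)}(|\dot\gamma|)^N\,ds\,d\Pi$ there is no a~priori bound: the distortion coefficient $\sigma_{\kappa/(N-1)}^{(s)}(|\dot\gamma|)$ can be arbitrarily large when $|\dot\gamma|$ is close to $\pi_{\kappa_\gamma/(N-1)}$, and nothing in your hypotheses prevents this. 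For $\int\alpha_\gamma\,d\Pi$, recall that $\Pi=(e_{t_0}^{-1})_\#\mu_{t_0}$ with $\mu_{t_0}=\rho_{t_0}\m_M$, so the integral carries an uncontrolled factor $\rho_{t_0}$; the Area/Coarea formulas of Section~5 disintegrate $\vol_M$, not the probability measure $\Pi$, and the Monge--Amp\`ere identity $\rho_1(\gamma_1)^{-1}=\mathcal J_x(1)\rho_{t_0}(x)^{-1}$ is precisely what would cancel $\rho_{t_0}$ --- but you have already stripped $\rho_1^{-1}$ out.

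The paper instead applies Jensen's inequality for $x\mapsto x^{1/N}$ to pull the $1/N$-th power outside the $\Pi$-integral, and only then uses H\"older with exponents $2p-1$ and $(2p-1)/(2p-2)$, writing $\rho_1^{-1}=\rho_1^{-1/(2p-1)}\cdot\rho_1^{-(2p-2)/(2p-1)}$ so that the full weight $\rho_1(\gamma_1)^{-1}$ lands in \emph{both} resulting terms $(I)$ and $(II)$. This is essential: in $(II)$ the weight combines with the pointwise $CD$ inequality to give $\tau_{\kappa,N}^{(s)}(|\dot\gamma|)^N\rho_1(\gamma_1)^{-1}\le\rho_s(\gamma_s)^{-1}$, which integrates against $d\mu_s$ to $\m(\{\rho_s>0\})\le\m(B_{2R})$; in $(I)$ the weight feeds into the Monge--Amp\`ere identity to convert $d\mu_{t_0}$ into $d\m$, after which the coarea disintegration and Proposition~\ref{prop:areaformula} legitimately produce the volume integral of $(\kappa-K)_-^p$. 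Your separate estimate $\int\rho_i^{-1/(N-1)}d\Pi\le\m(B_{2R})^{1/(N-1)}$ is correct but redundant: in the paper's route the factor $\m(B_{2R})^{1/N}$ emerges from the bound on $(II)$, not from isolating the density.
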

\begin{proof}
By the $CD(\kappa,N)$ condition there exists a Wasserstein geodesic $\Pi$ such that 
\begin{align}\label{ineq:curvcond}
\rho_t(\gamma_t)^{-\frac{1}{N}}\geq \tau_{\kappa_{\gamma}^-,\sN}^{\sscr{(1-t)}}(|\dot{\gamma}|)\rho_0(\gamma_0)^{-\frac{1}{N}}+\tau_{\kappa_{\gamma}^+,\sN}^{\sscr{(t)}}(|\dot{\gamma}|)\rho_1(\gamma_1)^{-\frac{1}{N}}
\ \mbox{ for $\Pi$-a.e. $\gamma\in \mathcal G(M)$}
\end{align} 
where $(e_t)_{\star}\Pi=\mu_t=\rho_t\m_{\sM}$ is concentrated in $B_{2R}$.
In the following we sometimes omit the dependence on $\gamma$ and  write $\kappa^{+/-}_{\gamma}=\kappa^{+/-}$. 
First we consider 
$
\tau_{\kappa_{\gamma}^+,\sN}^{\sscr{(t)}}(|\dot{\gamma}|)\rho_1(\gamma_1)^{-\frac{1}{N}}.$

 Recall that the unique Wasserstein geodesic $\mu_t$ is given by $\mu_t=(T_{t_0}^t)_{\#}\mu_{t_0}$ for $t_0\in (0,1)$ and $T_{t_0}^t(x)=\exp_x(-(t-t_0)\nabla\phi_{t_0}(x))$ for $x\in M_{t_0}$ where $\phi_{t_0}$ is the Hamilton-Jacobi shift of $\frac{1}{2}d^2$-convex function $\phi:M\rightarrow \R$ and $M_{t_0}$ is the set of points where $\phi_{t_0}$ is differentiable. The corresponding dynamical plan is given by $(e_{t_0}^{-1})_{\#}\mu_{t_0}=\Pi$. Since $\mu_i(B_R)=1$, $i=0,1$, it follows that $\mu_t(B_{2R})=1$ for all $t\in (0,1)$.

By Corollary \ref{cor:disest} we have
\begin{align}\label{ineq:xxx}
&\tau_{K,\sN}^{\sscr{(t)}}(|\dot{\gamma}|)\rho_1(\gamma_1)^{-\frac{1}{N}}-
\tau_{\kappa_{\gamma}^+,\sN}^{\sscr{(t)}}(|\dot{\gamma}|)\rho_1(\gamma_1)^{-\frac{1}{N}}\leq\nonumber\\
&\left(\frac{1}{\rho_1(\gamma_1)}\right)^{\frac{1}{N}}\left(\Lambda Ct|\dot\gamma|^{2p}\int_0^1 (\kappa(\gamma(s))-K)_{-}^p\sigma_{\kappa^+/(\sN-1)}^{(s)}(|\dot{\gamma}|)^{N-1}ds \right)^{\frac{1}{N(2p-1)}}\nonumber\\
&\ \ \ \ \ \ \ \ \ \ \ \ \ \ \ \ \times\left(\int_t^{1}\tau_{\kappa^+,\sN}^{(s)}(|\dot{\gamma}|)^{N}ds\right)^{\frac{2p-2}{N(2p-1)}}
\end{align}
 for $\Pi$-a.e. $\gamma\in \mathcal{G}(M)$ where $\Lambda$ and $C$ are the constants introduced in Section 3.

Integrating \eqref{ineq:xxx} w.r.t. $\Pi$ and using first  Jensen's and then H\"older's inequality yields
\begin{align*}
&\int \left[\tau_{K,\sN}^{\sscr{(t)}}(|\dot{\gamma}|)\rho_1(\gamma_1)^{-\frac{1}{N}}-
\tau_{\kappa_{\gamma}^+,\sN}^{\sscr{(t)}}(|\dot{\gamma}|)\rho_1(\gamma_1)^{-\frac{1}{N}}\right]d\Pi(\gamma)\\
& \leq \Lambda^{\frac{1}{N}} \bigg(C \underbrace{t\int |\dot \gamma|^{2p} \int_0^1 (\kappa(\gamma(s))-K)_{-}^p\sigma_{\kappa^+/(\sN-1)}^{(s)}(|\dot{\gamma}|)^{N-1}\rho_1(\gamma_1)^{-1}ds d\Pi(\gamma)}_{=:(I)}\bigg)^{\frac{1}{N(2p-1)}} \\
&\ \ \ \ \ \ \ \ \ \ \ \ \ \ \ \ \ \ \ \ \ \ \ \ \ \ \ \ \ \ \ \ \ \  \ \times \bigg(\underbrace{\int \int_t^{1}\tau_{\kappa^+,\sN}^{(s)}(|\dot \gamma |)^{N}\rho_1(\gamma_1)^{-1}ds d\Pi(\gamma)}_{=:(II)}\bigg)^{\frac{2p-2}{N(2p-1)}}
\end{align*}

In the following we will estimate $(I)$ and $(II)$ separately. First, we consider $(II)$. It follows by \eqref{ineq:curvcond} and Fubini's theorem that
\begin{align*}
(II)&=\int \int_t^{1}\tau_{\kappa^+,\sN}^{(s)}(|\dot \gamma|)^{N}\rho_1(\gamma_1)^{-1}ds d\Pi(\gamma)\leq \int_t^{1}\int \rho_s(e_s(\gamma))^{-1} d\Pi(\gamma)ds\\
& \overset{(e_s)_{\#}\Pi=\mu_s}{=} \int_0^{1}\int \rho_s(x)^{-1}d\mu_s(x) ds\  =  \int_t^1\m(\left\{\rho_s>0\right\})ds\leq  \m(B_{2R}(o)).
\end{align*}
Now, we consider $(I)$. We write $\Pi=(e_t^{-1})_{\#}\mu_t$ and $e_t^{-1}(x)=\gamma_x$. 
First, we decompose $\Pi$ into $\Pi^1+\Pi^2=\Pi$ where $\Pi^1= \Pi|_{\left\{\gamma\in \mathcal G(M): \dot{\gamma}\neq 0\right\}}$ and $\Pi^2=\Pi|_{\left\{\gamma\in \mathcal G(M):\dot \gamma=0\right\}}$.
$(I)$ is linear w.r.t. $\Pi$ and vanishes on $\Pi^2$. Therefore it is sufficient to assume $\Pi^1=\Pi$ and 
$\mu_t=(e_t)_{\#}\Pi$, $t\in (0,1)$,  is concentrated in the open set $U=U_t=l_{t}^{-1}\{(0,\infty)\}$ as was considered in the previous section.

From step 4 and 6 in  the proof of Theorem \ref{smoothcase} we have the Monge-Ampere equality 
\begin{align}\label{ineq:ma}
\rho_s(T_t^s(x))^{-1}&= \mathcal J_x(s)\rho_t(x)^{-1}\nonumber\\
&= \det A_x(s) e^{-V\circ T_t^s(x)} \rho_t(x)^{-1}\nonumber\\
&= \det B_x(s) e^{-V\circ T_t^s(x)} L_x(s) \rho_t(x)^{-1} \ \ \ \forall s,t\in (0,1), \ \forall x\in M_t'
\end{align}
and the inequality 
\begin{align}\label{ineq:ma2}
\rho_s(T_t^s(x))^{-1}\leq \det B_x(s) e^{-V\circ T_t^s(x)} L_x(s) \rho_t(x)^{-1} \ \ \ \forall t\in (0,1),\ s=0,1 \ \forall x\in M_t'.
\end{align}
By step 6 in the proof of Theorem \ref{smoothcase} it also holds
\begin{align}\label{ineq:yyy}
\det B_x(s) \geq \sigma_{\kappa^+/(\sN-1)}^{(\sscr{s})}(|\dot{\gamma}_x|)^{N-1} \det B_x(1)\ \ \ \& \ \ \ L_x(s)\geq s L_x(1) 
\end{align}$\forall s\in [0,1],\ \forall x\in M'_t$.

Therefore, after recalling that $\mu_t=(e_t)_\#\Pi$ and $e_t^{-1}(x)=\gamma_x(\cdot)$ with $\gamma_x(s)=\exp(-(s-t)\nabla \phi_t(x))$ for $x\in M_t$, 
we obtain the following key estimate
\begin{align*}
(I)=&t \int |\dot\gamma_x|^{2p} \int_0^1(\kappa(\gamma_x(s))-K)_{-}^p\sigma_{\kappa^+/(\sN-1)}^{(s)} (|\dot \gamma_{x}|)^{N-1} \rho_1(\gamma_x(1))^{-1}ds d\mu_t(x) \\
=&t \int |\dot\gamma_x|^{2p} \int_0^1(\kappa(\gamma_x(s))-K)_{-}^p\sigma_{\kappa^+/(\sN-1)}^{(s)} (|\dot \gamma_{x}|)^{N-1} \rho_1(T_t^1(x))^{-1}ds d\mu_t(x) \\
\leq & \int_{M_t'\cap U}\int_0^1(\kappa(\gamma_x(s))-K)_{-}^p  |\dot\gamma_x|^{2p}{\left\{\sigma_{\kappa^+/(\sN-1)}^{(s)}(|\dot \gamma_{x}|)^{N-1} \det B_x(1)e^{-V(T_{t}^1(x))}\right\}}
\\
&\ \ \ \ \ \ \ \ \ \ \ \ \ \ \ \ \ \ \ \ \ \ \  \ \ \ \ \ \ \ \ \ \ \ \  \ \ \ \  \ \ \ \ \ \ \ \ \ \ \times {tL_x(1)}  e^{V(x)}\rho_t(x)^{-1}ds d\mu_t(x) \\
\leq & \int_{M_t'\cap U}\left(\int_0^1(\kappa(\gamma_x(s))-K)_{-}^p\det B_x(s)e^{-V(\gamma_x(s))}ds\right)
\\
&\ \ \ \ \ \ \ \ \ \ \ \ \ \ \ \ \ \ \ \ \ \ \ \ \ \ \ \ \ \ \ \ \ \ \  \ \ \ \ \ \ \ \ \ \ \ \ \ \  \times 
|\dot\gamma_x|^{2p}L_x(t)e^{V(x)}\rho_t(x)^{-1} d\mu_t(x) \\
=& \int_{M_t'\cap U}1_{\{\rho_t>0\}}\left(\int_0^1(\kappa(\gamma_x(s))-K)_{-}^p \det B_x(s)e^{-V(\gamma_x(s))}ds\right)|\dot\gamma_x|^{2p} e^{V(x)} d\m(x)\\
=& \int_{M_t'\cap U}\left(\int_0^1(\kappa(\gamma_x(s))-K)_{-}^p  \det B_x(s)e^{-V(\gamma_x(s))}ds\right)
|\dot\gamma_x|^{2p} d\vol_M(x)
=(I)'
\end{align*}
where we used \eqref{ineq:ma2} for $s=1$ in the first inequality, \eqref{ineq:yyy} in the second inequality,  $\det  B_x(t)=L_x(t)=1$ in the third equality from below and \eqref{ineq:ma}  for the second equality from below.

We apply Lemma  \ref{lem:coarea}  to disintegrate $\vol_M$.  It holds
\begin{align*}
(I)'&= \int_Q \int_{\Sigma_a'}\left(\int_0^1(\kappa(\gamma_x(s))-K)_{-}^pe^{-V(\gamma_x(s))} \det B_x(s) ds\right)|\dot\gamma_x|^{2p}\\
&\ \ \ \ \ \ \ \ \ \ \ \ \ \ \ \ \ \ \ \ \ \ \ \ \ \ \ \ \ \ \times \frac{1}{|\nabla\phi_t|(x)} d\mathcal H^{n-1}(x) da.
\end{align*}
where $\Sigma_a':=M_t'\cap U\cap \{\phi_t=a\}\cap \Sigma_a$ as was introduced in the previous section.

Recall that $\dot{\gamma}_x(t)=\nabla \phi_t(\gamma_x(t))$ for $\Pi$-a.e. $\gamma\in \mathcal{G}(M)$. It follows that
\begin{align*}
(I)'&= \int_Q \int_{\Sigma_a'}\int_0^1(\kappa(\gamma_x(s))-K)_{-}^pe^{-V(\gamma_x(s))} \det B_x(s) |\dot{\gamma}_x|^{2p-1} d\mathcal H^{n-1}(x)ds da \\
&\leq  (2R)^{2p-2}\int_Q \int_0^1\int_{\Sigma_a'}(\kappa(\gamma_x(s))-K)_{-}^pe^{-V(\gamma_x(s))} \det B_x(s) |\dot{\gamma}_x|d\mathcal H^{n-1}(x)ds da \\
&=: (I)''.
\end{align*}
We can apply Proposition \ref{prop:areaformula}. It yields
\begin{align*}
&(I)''= (2R)^{2p-2}\int_Q \int_{F(\mathcal S)} (\kappa(x)-K)_{-}^pe^{-V(x)} d\vol_M(x) da\\
& \leq \frac{(2R)^{2p}}{R^2}\int_Q \int_{B_{2R}} (\kappa(x)-K)_{-}^pd\m(x) da = \frac{(2R)^{2p}}{R^2} \mathcal L^1(Q) \left\|(\kappa-K)_-\right\|_{L^p(\m|_{B_{2R}})}^p.
\end{align*}
It is not difficult to see that for $\phi_t:\bar B_{2R}\rightarrow \mathbb R$ $\frac{1}{2}d^2$-concave, it holds
 $\mathcal L^1(Q)\leq\mathcal L^1(\mbox{Im}\phi_t)\leq 4R^2$. It follows
\begin{align*}
(I)''\leq (2R)^{2p} \left\|(\kappa-K)_-\right\|_{L^p(\m|_{B_{2R}(o)})}^p
\end{align*}
We conclude that 
\begin{align*}
&\int \tau_{K,\sN}^{\sscr{(t)}}(|\dot{\gamma}|)\rho_1(\gamma_1)^{-\frac{1}{N}}d\Pi(\gamma)-
\int \tau_{\kappa_{\gamma}^+,\sN}^{\sscr{(t)}}(|\dot{\gamma}|)\rho_1(\gamma_1)^{-\frac{1}{N}}d\Pi(\gamma)\\
&\leq
 \left[\left(\Lambda{C}R^{2p}\int (\kappa-K)_{-}^p d\m\right)^{\frac{1}{2p-1}}\left(\m(B_{2R}(o)) \right)^{1-\frac{1}{2p-1}}\right]^{\frac{1}{N}}\\
 &\leq \left[\m(B_{2R}(o))\Lambda C^{\frac{1}{2p-1}} \left(R^2\left(\int (\kappa-K)_{-}^p d{\m}\right)^{\frac{1}{p}}\right)^{\frac{p}{2p-1}}\right]^{\frac{1}{N}}\\
 &=\m(B_{2R}(o))^{\frac{1}{N}}\Lambda^{\frac{1}{N}}C^{\frac{1}{N(2p-1)}} k_{[X,o]}(p,K,2R)^{\frac{p}{N(2p-1)}}.
\end{align*}
The same inequality holds in the case when we replace $\kappa^+_{\gamma}$ by $\kappa^-_{\gamma}$ and $t$ by $1-t$. Therefore
\begin{align*}
\int\rho_t^{-\frac{1}{N}}d\mu_t&\geq \int\left[\tau_{\sK,\sN}^{(1-t)}(|\dot{\gamma}|)\rho_0(\gamma_0)^{-\frac{1}{N}}+\tau_{\sK,\sN}^{(t)}(|\dot{\gamma}|)\rho_1(\gamma_1)^{-\frac{1}{N}}\right]d\Pi(\gamma)\\
&\hspace{2cm}-2\Lambda^{\frac{1}{N}}\m(B_{2R}(o))^{\frac{1}{N}}C^{\frac{1}{N(2p-1)}} k_{[X,o]}(p,K,2R)^{\frac{p}{N(2p-1)}}.
\end{align*}
That was to prove.
\end{proof}

Following the prove of Theorem \ref{resultA} we also obtain

\begin{theorem}
Let $M$ be a smooth, normalized mm space that satisfies $CD(\kappa,N)$ for $\kappa\in C(M)$ and $N\geq 2$. 
Let $K\in \R$ and $p>{\textstyle \frac{N}{2}}$ such that $k_{[\sM]}(p,K)<\infty$.
Let 
$\mu_0,\mu_1\in\mathcal{P}^2(\m_{\sM})$ with $[\mu_i]_{ac}=\rho_i$, $i=0,1$.
Let $\Pi$ be the dynamical optimal plan  between $\mu_0$ and $\mu_1$. Let $\epsilon>0$ and assume $\Pi(\{\gamma: L(\gamma)\leq \pi_{K/(N-1)}-\epsilon\})=1$. Then
\begin{align}
S_{\sN}((e_t)_{\#}\Pi)&\leq -\int\left[\tau_{\sK,\sN}^{(1-t)}(|\dot{\gamma}|)\rho_0(\gamma_0)^{-\frac{1}{N}}+\tau_{\sK,\sN}^{(t)}(|\dot{\gamma}|)\rho_1(\gamma_1)^{-\frac{1}{N}}\right]d\Pi(\gamma)\nonumber\\
&\hspace{1.5cm}+2C^{\frac{1}{N(2p-1)}} k_{[X,o]}(p,K)^{\frac{p}{N(2p-1)}} \ \forall t\in (0,1).
\end{align}
\end{theorem}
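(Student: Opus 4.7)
The plan is to mirror the proof of Theorem \ref{resultA} almost verbatim, exploiting the simplifications afforded by the normalization $\m_{\sM}(M)=1$ and the compactness of $M$ that is implicit in $k_{[\sM]}(p,K)<\infty$. Throughout the argument, one replaces $B_R(o)$ and $B_{2R}(o)$ by $M$ itself, and the scale $2R$ by $D:=\diam M$. The factor $\m_{\sM}(B_{2R}(o))^{1/N}$ then collapses to $\m_{\sM}(M)^{1/N}=1$, and the $\Lambda$-factor that was needed in the pointed case to handle points close to distance $\pi_{K/(N-1)}$ is absorbed into a redefined constant, still denoted $C$, by virtue of the assumption $\Pi(\{L(\gamma)\leq \pi_{K/(N-1)}-\epsilon\})=1$.

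First I would apply the $CD(\kappa,N)$ condition together with Remark (after Theorem \ref{smoothcase}), which endows $\Pi$ with the pointwise inequality
\begin{align*}
\rho_t(\gamma_t)^{-1/N} \geq \tau_{\kappa^-_{\gamma},\sN}^{(1-t)}(|\dot\gamma|)\rho_0(\gamma_0)^{-1/N} + \tau_{\kappa^+_{\gamma},\sN}^{(t)}(|\dot\gamma|)\rho_1(\gamma_1)^{-1/N}
\end{align*}
for $\Pi$-a.e.\ $\gamma$. I would then replace $\kappa^{\pm}_{\gamma}$ by the constant $K$ on the right-hand side using Corollary \ref{cor:disest}, multiply the resulting error terms by $\rho_i(\gamma_i)^{-1/N}$ and integrate against $\Pi$. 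Jensen's inequality followed by Hölder's inequality yields an estimate of the form $(I)^{\frac{1}{N(2p-1)}}\cdot(II)^{\frac{2p-2}{N(2p-1)}}$, exactly as in Theorem \ref{resultA}. The factor $(II)$ is bounded by $\int_t^1 \m_{\sM}(\{\rho_s>0\})\,ds\leq \m_{\sM}(M)=1$ via the variable-curvature inequality and Fubini, so only $(I)$ requires work.

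The estimate of $(I)$ proceeds exactly as before: I would use the Monge--Ampère equality of step 4 in the proof of Theorem \ref{smoothcase}, the Jacobi-field comparison $\det B_x(s)\geq \sigma_{\kappa^+/(\sN-1)}^{(s)}(|\dot\gamma_x|)^{N-1}\det B_x(1)$ of step 6, the disintegration of $\vol_M$ along level sets of the Kantorovich potential $\phi_t$ (Lemma \ref{lem:coarea}), and the area formula (Proposition \ref{prop:areaformula}) to convert the Jacobian integrals along transport geodesics into a single volume integral on $M$. Since $|\dot\gamma_x|\leq D$ for $\Pi$-a.e.\ $\gamma$ by compactness, one can pull out a factor $D^{2p-2}$ and conclude
\begin{align*}
(I)\leq D^{2p-2}\,\mathcal{L}^1(Q)\,\|(\kappa-K)_-\|_{L^p(\m_{\sM})}^p \leq D^{2p}\,\|(\kappa-K)_-\|_{L^p(\m_{\sM})}^p,
\end{align*}
where the bound $\mathcal{L}^1(Q)\leq D^2$ comes from the fact that $\phi_t$ is $\tfrac{1}{2}d^2$-concave on a set of diameter $D$, so its image has length at most $D^2$. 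By the very definition of $k_{[\sM]}(p,K)=D^2\|(\kappa-K)_-\|_{L^p(\bar\m_{\sM})}$ and the fact that $\bar\m_{\sM}=\m_{\sM}$ under normalization, this right-hand side equals $k_{[\sM]}(p,K)^p$.

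The main obstacle is essentially cosmetic: since Theorem \ref{resultA} already carried out the intricate Monge--Ampère plus area/coarea decomposition, the present statement is a corollary whose work consists in verifying that in the compact normalized case the localization factor $\m_{\sM}(B_{2R}(o))^{1/N}\Lambda^{1/N}$ can be reabsorbed into the constant $C$ and that $D$ takes the place of $2R$. The only genuinely new point to check is the inequality $\mathcal{L}^1(Q)\leq D^2$ for the Kantorovich potential defined on all of $M$, which follows because the range of a $\tfrac{1}{2}d_{\sM}^2$-concave function on a set of diameter $D$ is contained in an interval of length at most $\tfrac{1}{2}D^2\cdot 2=D^2$.
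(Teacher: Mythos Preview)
Your proposal is correct and follows essentially the same approach as the paper: indeed, the paper itself presents this theorem immediately after Theorem \ref{resultA} with the sole remark ``Following the proof of Theorem \ref{resultA} we also obtain'' and no separate argument, so repeating the Monge--Amp\`ere/area--coarea machinery while replacing $B_{2R}(o)$ by $M$, $2R$ by $D=\diam M$, and $\m_{\sM}(B_{2R}(o))$ by $1$ is exactly what is intended. Your observation that the $\Lambda^{1/N}$-factor is absorbed into the (now $\epsilon$-dependent) constant $C$, and your verification that $\mathcal{L}^1(Q)\leq D^2$ via the Lipschitz bound on $\phi_t$, fill in the only details the paper leaves implicit.
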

As an immediate corollary we also derive Corollary \ref{cor:bm}

\begin{remark}\label{rem:mcp}
The proof of Theorem \ref{resultA} also yields the following. Let $(M,o)$ be as in Theorem \ref{resultA}. Let $x_0\in B_{R}(o)$  and $\mu\in \mathcal P(\m_{\sM})$ be a measure supported in $B_{r}(x_0)$ for $r\in (0,\pi_{K/(N-1)}-\epsilon)\cap (0,R)$ and $\epsilon>0$. Let $\Pi$ be  the dynamical optimal plan such that $(e_1)_{\#}\Pi=\mu$ and $(e_0)_{\#}\Pi=\delta_{x_0}$. Then it holds
\begin{align}
S_{\sN}((e_t)_{\#}\Pi)&\leq -\int\tau_{\sK,\sN}^{(t)}(|\dot{\gamma}|)\rho_1(\gamma_1)^{-\frac{1}{N}}d\Pi(\gamma)\nonumber\\
&\hspace{0.7cm}+\m_{\sM}(B_{2R}(o))^{\frac{1}{N}}\Lambda^{\frac{1}{N}}C^{\frac{1}{N(2p-1)}} k_{[M,o]}(p,K,2R)^{\frac{p}{N(2p-1)}} \ \forall t\in (0,1).
\end{align}
\end{remark}
\section{Stability}
\begin{theorem}
Let $\{(M_i,o_i)\}_{i\in\mathbb{N}}$ be a sequence of smooth, normalized pmm spaces that satisfy the condition $CD(\kappa_i,N)$ for $\kappa_i\in C(X_i)$ 
such that
$$
k_{[M_i,o_i]}(\kappa_i, p,K,R)\rightarrow 0 \mbox{ when  } i\rightarrow \infty \ \ \forall R>0
$$
with $K\in \R$ and $p>\textstyle{\frac{N}{2}}$. 
Then $\{[M_i,o_i]\}_{i\in\mathbb{N}}$ subconverges in pmG sense to 
the isomorphism class of a pmm space $\pmms$ satisfying the condition $CD(K,N)$.
\end{theorem}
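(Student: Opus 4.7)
The plan is to combine the precompactness statement of Corollary \ref{pointedprecompactness} with the displacement convexity inequality of Theorem \ref{resultA} and pass to the limit along a subsequence. Since $k_{[M_i,o_i]}(\kappa_i,p,K,R)\to 0$ for every $R>0$, the family $\{[M_i,o_i]\}$ satisfies the hypothesis of Corollary \ref{pointedprecompactness} for any fixed bounding function $f$ once $i$ is large enough. Passing to a subsequence (not relabelled), $\{[M_i,o_i]\}$ converges in pmG sense to the isomorphism class of a pointed normalized metric measure space $(X_\infty,o_\infty)$, and the task reduces to verifying that $(X_\infty,o_\infty)$ satisfies $CD(K,N)$.

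Consider first $K\leq 0$. Fix $\mu_0,\mu_1\in\mathcal{P}_b^2(\m_{X_\infty})$ with supports in $B_{R/2}(o_\infty)$ for some $R\geq 1$, and, after a convolution-smoothing step ensuring uniform bounds on the densities, lift them via the pmG approximating maps to $\mu_0^i,\mu_1^i\in\mathcal{P}_b^2(\m_{M_i})$ supported in $B_R(o_i)$ with $S_N(\mu_j^i)\to S_N(\mu_j)$. Theorem \ref{resultA} on $M_i$ supplies an optimal dynamical plan $\Pi_i$ between $\mu_0^i$ and $\mu_1^i$ satisfying
\begin{align*}
S_N((e_t)_\#\Pi_i)\leq -\int\left[\tau_{K,N}^{(1-t)}(|\dot\gamma|)\rho_0^i(\gamma_0)^{-\frac{1}{N}}+\tau_{K,N}^{(t)}(|\dot\gamma|)\rho_1^i(\gamma_1)^{-\frac{1}{N}}\right]d\Pi_i(\gamma)+E_i,
\end{align*}
with $E_i\to 0$, since Corollary \ref{bishopgromov} yields a uniform upper bound on $\m_{M_i}(B_{2R}(o_i))$ while $k_{[M_i,o_i]}(\kappa_i,p,K,2R)\to 0$. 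Weak subsequential limits $\Pi_i\rightharpoonup\Pi_\infty$ produce an optimal dynamical coupling in the limit: lower semi-continuity of $S_N$ handles the left-hand side, while continuity of $\tau_{K,N}^{(t)}$ in the length variable, combined with uniform density bounds and the dynamic-disintegration computation appearing in the proof of Theorem \ref{resultA}, enables the limit passage on the right. A standard density argument then extends the $CD(K,N)$ inequality to arbitrary $\mu_0,\mu_1\in\mathcal{P}_b^2(\m_{X_\infty})$.

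The main obstacle is the case $K>0$, because Theorem \ref{resultA} requires $\Pi_i$-almost every geodesic to have length at most $\pi_{K/(N-1)}-\epsilon$, which is not granted by the hypotheses. I would treat it in two steps. First, an Aubry-type Bonnet--Myers argument obtained by feeding near-antipodal Dirac-to-measure transports into the $MCP$-variant of the displacement inequality (Remark \ref{rem:mcp}) forces $\diam\supp\m_{M_i}\leq\pi_{K/(N-1)}+\epsilon_i$ with $\epsilon_i\to 0$: along any near-antipodal transport the factor $\tau_{K,N}^{(t)}(\theta)$ blows up as $\theta\uparrow\pi_{K/(N-1)}$, which is incompatible with the vanishing error term unless the diameter closes up. Second, with this a priori estimate in hand, for every $\eta>0$ and all sufficiently large $i$, transport geodesics between measures supported in $B_R(o_i)$ have length bounded strictly below $\pi_{(K-\eta)/(N-1)}$, so Theorem \ref{resultA} applies with $K$ replaced by $K-\eta$ (using $k_{[M_i,o_i]}(\kappa_i,p,K-\eta,R)\leq k_{[M_i,o_i]}(\kappa_i,p,K,R)\to 0$), and the same limit passage delivers $CD(K-\eta,N)$ on $(X_\infty,o_\infty)$. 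Continuity of $\tau_{K',N}^{(t)}(\theta)$ in $K'$ for $\theta$ in a compact subset of $[0,\pi_{K/(N-1)})$, together with Proposition \ref{htht}, permits sending $\eta\downarrow 0$ to obtain $CD(K,N)$ and complete the proof.
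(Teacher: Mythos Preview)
Your treatment of the case $K\leq 0$ follows the paper's own argument in spirit: precompactness via Corollary \ref{pointedprecompactness}, transfer of endpoint measures, Theorem \ref{resultA} on each $M_i$, and passage to the limit using lower semicontinuity of $S_N$ together with the upper semicontinuity of the $T^{(t)}_{K,N}$-functional (Sturm's Lemma 3.3). The paper implements the transfer via the explicit disintegration kernels $Q^i_n,P^i_n$ and removes the $L^\infty$ density assumption by truncating the optimal coupling rather than by smoothing, but these are cosmetic differences.

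For $K>0$ your route genuinely diverges from the paper's. The paper first shows (via Remark \ref{rem:mcp}) that the limit satisfies $MCP(K,N)$, deduces the diameter bound $\diam M_i\leq\pi_{K/(N-1)}+\epsilon_i$, and then---crucially---splits the integral in Theorem \ref{resultA} over $\Xi(\epsilon)$ and its complement. The contribution of the complement is controlled by showing that the restricted plans $\hat\Pi^i_{\epsilon_i}$ converge to a plan supported on geodesics of maximal length $\pi_{K/(N-1)}$, and then invoking \emph{uniqueness of opposite points} in $MCP(K,N)$ spaces to force the marginals to be Dirac masses, hence $S_N((e_0)_\#\hat\Pi^i)\to 0$. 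Your alternative---run the whole argument with $K-\eta$ so that the diameter bound makes the length constraint in Theorem \ref{resultA} vacuous, obtain $CD(K-\eta,N)$ on $X_\infty$, then let $\eta\downarrow 0$---is more elementary in that it bypasses the opposite-points structure entirely.

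There is, however, a gap in your final step. Proposition \ref{htht} gives the implication $CD(K,N)\Rightarrow CD(K-\eta,N)$, which is the \emph{wrong} direction; it does not help you pass from $CD(K-\eta,N)$ for all $\eta>0$ to $CD(K,N)$. What is actually needed is a compactness argument: for fixed $\mu_0,\mu_1$ take the plans $\Pi^\eta$ witnessing $CD(K-\eta,N)$, extract a weak limit $\Pi$ along $\eta_j\downarrow 0$, use that $\tau_{K-\eta_0,N}^{(t)}(\theta)$ is bounded and continuous on $[0,\pi_{K/(N-1)}]\subset[0,\pi_{(K-\eta_0)/(N-1)})$ to pass to the limit in the right-hand side for each fixed $\eta_0>0$, and only then send $\eta_0\downarrow 0$ by \emph{monotone convergence} (now with $\Pi$ fixed). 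Continuity on compact subsets of $[0,\pi_{K/(N-1)})$ alone is insufficient because $\diam X_\infty$ may equal $\pi_{K/(N-1)}$, so geodesics of exactly maximal length can occur; monotone convergence handles this boundary case. Once this is made explicit your argument goes through and is a valid alternative to the paper's opposite-points mechanism.
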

\begin{proof} 
We first consider the case $K\leq 0$. We set $(M_i.o_i)=(X_i,o_i)$, $i\in \mathbb N$.

\textbf{1.}
Our assumptions allow to extracting a subsequence $\pmmsi$ that converges in pmGH sense to a pmm space
$(X_\sinfty,o_{\sinfty})$. The corresponding isomorphism classes $[X_i,o_i]$ converge w.r.t. pmG convergence to $[X_\infty,o_\infty]$. Since $(X_i,o_i)$ are length spaces, $(X_{\sinfty},o_\sinfty)$ is a length space too.  In particular, $\bar B_R(o_i)$ converges in GH sense to $\bar B_R(o_\sinfty)$ for all $R>0$ (\cite{bbi}, 
Remark 3.2.9 in \cite{gmsstability}) and $[\bar B_R(o_i), \m_{B_R(o_i)}]$ converge to $[\bar B_R(o_\infty), \m_{B_R(o_\sinfty)}]$ in mG sense.

For $i\in \bar \N$ and $R>0$ we set $B_R^i=\bar B_R(o_i)$, $d^i_R= d^i_{\bar B_R(o_i)}$, $\m^i_R=  \m_{\bar B_R(o_i)}$, $\bar \m_R^i:= \m(\bar B_R(o_i))^{-1}\m_R^i$ and $\m_R^i(X_i)=\alpha_R^i$.
It holds $\sup_{i\in \mathbb{N}}\alpha_R^i<\infty$ and $\alpha_R^i\rightarrow \alpha_R^\sinfty$ as $i\rightarrow \infty$ for all $R>0$.

Let $(Z,d_{\sZ})$ be a compact metric spaces where mGH convergence of $(B^i_{2R})_{i\in \N}$ is realized. Then, also mGH convergence of $(B_R^i)_{i\in \N}$
is realized in $Z$.  
\smallskip\\
\textbf{2.} Consider  ${R=n\in\mathbb{N}}$.
Note that for $\mu\in\mathcal{P}^2(\m_{\sX_i})$ with $\mu(B_n^i)=1$ we have $\mu\in\mathcal{P}^2(\bar{\m}_{n}^i)$.
We denote by $\rho$ the density w.r.t. $\m_{\sX_i}$ and with $\bar{\rho}_n$ the density w.r.t. $\bar{\m}^i_n$ that is $\bar{\rho}_n=\m_{\sX^i}(B_n^i)\rho$.

Since $[B^i_n,\de^i_n,\bar{\m}_n^i]\rightarrow [B^{\infty}_n,\de_n^{\sinfty},\bar{\m}_n^{\sinfty}]$ in mG sense, we find optimal couplings $q^i_n$ between $\bar{\m}_n^i$ and $\bar{\m}^{\sinfty}_n$ such 
that $$W_{\sZ}(\bar{\m}_n^i,\bar{\m}_{n}^{\sinfty})^2=\int \de_{\sZ}^2(x,y)dq^i_n(x,y)=\de_{i,n}^2\rightarrow 0.$$
As in the proof of Theorem 4.20 in \cite{stugeo1} we define $Q^i_n:\mathcal{P}^2(\bar{\m}^{\sinfty}_n)\rightarrow\mathcal{P}^2(\bar{\m}^i_n)$ by
\begin{align*}
Q^i_n:\bar{\rho}_n\m_n^{\sinfty}\mapsto Q^i_n(\bar{\rho}_n)\m_n^i \ \mbox{ where }\ Q^i_n(\bar{\rho}_n)(y)=\int\bar{\rho}_n(x)Q^i_n(y,dx)
\end{align*}
where $Q^i_n(y,dx)$ is a disintegration of $q^i_n$ w.r.t. $\bar{\m}_{n}^i$. One can check that
\begin{align*}
S_{\sN}(Q_n^i(\mu)|\bar{\m}^i_n)\leq S_{\sN}(\mu|\bar{\m}^{\sinfty}_n)\ \ \& \ \ \de_{\sZ,\sW}(Q_n^i(\mu),\mu)\rightarrow 0\ \mbox{ as } \ i\rightarrow \infty.
\end{align*}
Similar, we define $P^i_n:\mathcal{P}^2(\bar{\m}^{i}_n)\rightarrow\mathcal{P}^2(\bar{\m}^{\sinfty}_n)$ by
\begin{align*}
P^i_n:\bar{\rho}_n\m_n^{i}\mapsto P^i_n({\bar{\rho}}_n)\m_n^{\sinfty} \ \ \mbox{ where } P^i_n({\bar{\rho}}_n)=\int\bar{\rho}_n(x)P^i_n(y,dx)
\end{align*}
where $P^i_n(y,dx)$ is a disintegration of $q^i_n$ w.r.t. $\m_{n}^{\infty}$. Again, one can check that
\begin{align*}
S_{\sN}(P_n^i(\mu)|\bar{\m}^{\sinfty}_n)\leq S_{\sN}(\mu|\bar{\m}^{i}_n)\ \ \& \ \ \de_{\sZ,\sW}(P_n^i(\mu),\mu)\rightarrow 0\ \mbox{ as } \ i\rightarrow \infty.
\end{align*}\smallskip

\noindent
\textbf{3.}
Let $\mu_j\in\mathcal{P}^2(\m_{\sX_{\sinfty}})$ with density $\rho_j$ and $\mu_j(B_n(o))=1$ for  $j=0,1$. 
Assume $0\leq \rho_j\leq r<\infty$. We will remove this assumption at the end of the proof.
In particular, $\mu_0,\mu_1\in \mathcal{P}^2(\bar{\m}_{n}^{\sinfty})$ and the  $\bar{\m}_n^{\sinfty}$-densities are $\bar{\rho}_{n,j}$, ${j=0,1}$. Clearly
$$\rho_j=\alpha^{\sinfty}_n\bar{\rho}_{n,j}\ \ \&\ \ \ \bar{\rho}_{n,j}=\frac{\alpha^{\sinfty}_{2n}}{\alpha^{\sinfty}_{n}}\bar{\rho}_{2n,j}.
$$
Consider $Q_n^i(\mu_j)=:\mu_{n,j}^i$, ${j=0,1}$, and its densities $Q_n^i(\bar{\rho}_{j,n})$ w.r.t $\bar{\m}_n^i$.
Let $\rho_{n,j}^i$ be the densitiy of $Q_n^i(\mu_j)$ w.r.t. $\m_{\sX_i}$. Then
\begin{align*}
\rho^i_{n,j}=\alpha_{n}^iQ_n^i(\rho_j)\ \mbox{ $\&$  }\ 
Q_n^i(\rho_j)=\frac{\alpha^i_n}{\alpha^i_{2n}}Q_{2n}^i(\rho_j).
\end{align*}
Let $\Pi^i$ be the unique optimal dynamical plan between $\mu_{n,j}^i$, $j=0,1$ with $(e_t)_{\#}\Pi^i=\mu_{n,t}^i\in \mathcal{P}(\bar{\m}_n^i)$. It holds that $\mu_{n,t}^i(B_n^i)=\mu_{n,t}^i(B_{2n}^i)=1$.
By Theorem \ref{resultA} for the $L^2$-Wasserstein geodesic $\Pi^i$ between 
$Q_n^i(\mu_j)_{j=0,1}$ with 
$(e_t)_{\star}\Pi^i=\rho^i_{t}\m_{\sM_i}=\mu^i_{t}$, $(e_t)_{\star}\Pi^i(B^i_{2n})=1$ and $(e_0,e_1)_{\star}\Pi^i=\pi^i$
such that
\begin{align}\label{ineq:ttt}
\int{\rho}^i_{t}(x)^{-\frac{1}{N}}d\mu^i_{t}(x)&\geq \int\left[\tau_{\sK, \sN}^{(1-t)}(|\dot{\gamma}|){\rho}^i_{n,0}(\gamma_0)^{-\frac{1}{N}}+\tau_{\sK,\sN}^{(t)}(|\dot{\gamma}|){\rho}^i_{n,1}(\gamma_1)^{-\frac{1}{N}}\right]d\Pi^i(\gamma)\nonumber\\
&\hspace{1.2cm}-2\Lambda^{\frac{1}{N}}(\alpha_{2n}^i)^{\frac{1}{N}}C^{\frac{1}{N(2p-1)}} k_{[M_i,o_i]}(p,K,2n)^{\frac{p}{N(2p-1)}}
\end{align}
\medskip\\
\textbf{4.}
The left hand side in the last inequality is $-S_{\sN}(\mu_{t}^i|\m_{\sM_i})$. Changing the reference measure yields 
$$-S_{\sN}(\mu_{t}^i|\m_{\sM_i})=-(\alpha^i_{2n})^{-\frac{1}{\sN}}S_{\sN}(\mu_{t}^i|\bar{\m}_{2n}^i).$$
Then, we map $\mu_t^i$ with $P^i_{2n}$ to $B_n^\sinfty$ and obtain 
\begin{align}\label{ineq:ii}
\left({\textstyle \frac{\alpha^i_{2n}}{\alpha^{\infty}_{2n}}}\right)^{-\frac{1}{\sN}}S_{\sN}(P^i_{2n}\mu_{t}^i|\m_{\sX^{\infty}})=(\alpha^i_{2n})^{-\frac{1}{\sN}}S_{\sN}(P^i_{2n}\mu_{t}^i|\bar{\m}_{2n}^{\sinfty})\leq (\alpha^i_{2n})^{-\frac{1}{\sN}}S_{\sN}(\mu_{t}^i|\bar{\m}_{2n}^i).
\end{align}
Similar, the first term in the right hand side of \eqref{ineq:ttt} equals 
\begin{align*}
(\alpha_{n}^i)^{-\frac{1}{N}}\int\left[\tau_{\sK,\sN}^{(1-t)}(d(x,y))Q_n^i(\rho_0)(x)^{-\frac{1}{N}}+\tau_{\sK,\sN}^{(t)}(d((x,y))Q_n^i(\rho_1)(x)^{-\frac{1}{N}}\right]d\pi^i(x,y)
\end{align*}
That is exactly $(\alpha_{n}^i)^{-\frac{1}{N}}$ times the expression $T^{(t)}_{\sK,\sN}(\pi^i|\bar{\m}_n^i)$ in step (v) of the proof of Theorem 3.1 in \cite{stugeo2}. In our context  the reference measure  is $\bar{\m}_{n}^i$. Therefore, following (v) in the proof of Theorem 3.1 in \cite{stugeo2} and using the maps $P^i_n$ and $Q^i_n$ we obtain 
\begin{align}\label{ineq:i}
(\alpha_{n}^i)^{-\frac{1}{N}}T^{(t)}_{\sK,\sN}(\pi^i|\bar{\m}_n^i)&\leq (\alpha_{n}^i)^{-\frac{1}{N}}T^{(t)}_{\sK,\sN}(\bar{\pi}^{i}|\bar{\m}_n^{\sinfty})+(\alpha_{n}^i)^{-\frac{1}{N}}\hat{C}W_{\sZ}(\bar{\m}_n^{\infty},\bar{\m}_n^i)\nonumber\\
&\leq \left({\textstyle \frac{\alpha_{n}^i}{\alpha_{n}^{\sinfty}}}\right)^{-\frac{1}{N}}T^{(t)}_{\sK,\sN}(\bar{\pi}^{i}|\m_{\sX_{\infty}})+(\alpha_{n}^i)^{-\frac{1}{N}}\hat{C}\de^2_{n,i}
\end{align}
if $i\geq i_{\epsilon}$ for $i_{\epsilon}$ sufficiently large. The constant $\hat{C}$ does not depend on $i$. The measure 
$\bar{\pi}^{i}\in \mathcal P(X_{\sinfty}^2)$ is a coupling between $\mu_0$ and $\mu_1$ (not necessarily optimal) such that $\bar{\pi}^{i}\rightarrow \pi$ 
weakly if $i\rightarrow \infty$ for an optimal coupling $\pi$ between $\mu_0$ and $\mu_1$.
Hence, \eqref{ineq:ttt}, \eqref{ineq:ii} and \eqref{ineq:i} together imply
\begin{align*}
\left({\textstyle \frac{\alpha^i_{2n}}{\alpha^{\infty}_{2n}}}\right)^{-\frac{1}{\sN}}S_{\sN}(P^i_{2n}\mu_{t}^i|\m_{\sX^{\infty}})&\leq \left({\textstyle \frac{\alpha_{n}^i}{\alpha_{n}^{\sinfty}}}\right)^{-\frac{1}{N}}T^{(t)}_{\sK,\sN}(\bar{\pi}^{i}|\m_{\sX_{\infty}})+(\alpha_{n}^i)^{-\frac{1}{N}}\hat{C}\de^2_{n,i}\\
&\hspace{0.5cm}+2\Lambda^{\frac{1}{N}}(\alpha_{2n}^i)^{\frac{1}{N}}C^{\frac{1}{N(2p-1)}} k_{[X_i,o_i]}(p,K,2n)^{\frac{p}{N(2p-1)}}.
\end{align*}
\textbf{5.} As in step (vi) in the proof of Theorem 3.1 in \cite{stugeo2} one checks that $P^i_{2n}\mu_{n,t}^i$ converges weakly to a probability $\mu_t$ for every $t\in [0,1]$, and $\mu_t$ is as
geodesic between $\mu_0$ and $\mu_1$ in $\mathcal{P}^2(X)$, 
and the limit is $\m_{\sX}$-absolutely continuous. Weak convergence of $\bar{\m}_n^i$ and $\bar{\m}_{2n}^i$ for $i\rightarrow \infty$ yields $\alpha_n^i\rightarrow \alpha_n^{\sinfty}$ for every $n\in\mathbb{N}$.
Moreover, by  Lemma 3.3 in \cite{stugeo2} we have
\begin{align*}
\limsup_{i\rightarrow\infty}T^{(t)}_{\sK,\sN}(\bar{\pi}^i|\m_n^i)\leq T^{(t)}_{\sK,\sN}(\pi|\m_n^i).
\end{align*}
and by lower semi-continuity of the $N$-Reny entropy
\begin{align*}
\liminf_{i\rightarrow \infty}S_{\sN}(P^i_{2n}\mu_{t}^i|\m_{\sX_{\infty}})\geq S_{\sN}(\mu_t|\m_{\sX_{\sinfty}}).
\end{align*}
Hence, letting $i\rightarrow \infty$
 and using again lower (upper) semi-continuity of 
$S_{\sN}(\mu_t|\m_{\sX_{\sinfty}})$ ($T_{\sK,\sN}^{\scriptscriptstyle{(t)}}(\pi|\m_{\sX_{\sinfty}})$, we obtain a geodesic 
$(\mu_t)_{t\in[0,1]}$ and a optimal coupling $\pi$ such that 
\begin{align*}
S_{\sN}(\mu_{t}|\m_{\sX_{\infty}})&\leq T^{(t)}_{\sK,\sN}({\pi}|\m_{\sX_{\infty}}).
\end{align*}
\noindent
\textbf{6.}
Finally, we want to remove the assumption that $\rho_j\in L^{\infty}(\m_{\sX_{\sinfty}})$ for $j=0,1$. Therefore, consider general probability measures $\mu_0,\mu_1\in\mathcal{P}^2(\m_{\sX_{\sinfty}})$ with 
densities $\rho_j$, and $\mu_j(B_n^{\sinfty}(o))=1$ for $j=0,1$.
Fix an arbitrary optimal coupling $\tilde{\pi}$ between $\mu_0$ and $\mu_1$, and set for $r\in (0,\infty)$
\begin{align*}
E_r:=\left\{(x_0,x_1))\in X_{\sinfty}^2: \rho_0(x_0)\leq r, \rho_1(x_1)\leq r\right\}, \ \alpha_r=\tilde{\pi}(E_r),\ \tilde{\pi}^r:=\frac{1}{\alpha_r}\tilde{\pi}|_{E_r}.
\end{align*}
The coupling $\tilde{\pi}^r$ is an optimal coupling between its marginals $\mu_0^r$ and $\mu^r_1$ such that for $j=0,1$
\begin{align}\label{Pos1}
W_{\sX_{\sinfty}}(\mu_j,\mu_j^r)\leq \epsilon \ \mbox{ if }r>0 \mbox{ sufficiently large}.
\end{align}
Depending on $r>0$ we can construct $\mu_t^r$ and $\pi^r$ as before such that 
\begin{align*}
S_{\sN}(\mu^r_{t}|\m_{\sX_{\infty}})&\leq T^{(t)}_{\sK,\sN}({\pi}^r|\m_{\sX_{\infty}}).
\end{align*}
From (\ref{Pos1}) we obtain - after  choosing subsequences - that $\mu_t^r$ converges weakly to a probabiltiy $\mu_t$ for $t\in [0,1]\cap\mathbb{Q}$. Then, again as in step (vi) of the proof of Theorem 3.1 in \cite{stugeo2}
$\mu_t$ extends to geodesic between $\mu_0$ and $\mu_1$ and $$\liminf_{i\rightarrow \infty}
S_{\sN}(\mu^r_{t}|\m_{\sX_{\infty}})\geq 
S_{\sN}(\mu_{t}|\m_{\sX_{\infty}})$$
for $t\in [0,1]$. 
\smallskip\\
Set 
$\tau_{\sK,\sN}^{{\scriptscriptstyle (1-t)}}(|\dot{\gamma}|)\rho_0(\gamma_0)^{-\frac{1}{N}}+
\tau_{\sK,\sN}^{{\scriptscriptstyle (t)}}(|\dot{\gamma}|)\rho_1(\gamma_1)^{-\frac{1}{N}}=\psi(\gamma)$. $\psi$ is integrable w.r.t. $\tilde{\pi}$, since the distortion coefficients are bounded
$\rho_0$ and $\rho_1$ are probability densities for $\mu_0$ and $\mu_1$ respectively, and $\tilde{\pi}$ is an coupling between $\mu_0$ and $\mu_1$.
Therefore, if we set $\pi^{\epsilon}=\alpha_r\pi^r+\tilde{\pi}|_{X^2\backslash E_r}$,
it follows that 
\begin{align*}
\lim_{\epsilon\rightarrow 0}\left|T_{\sK,\sN}^{(t)}(\pi^{\epsilon}|\m_{\sX_{\sinfty}})-T_{\sK,\sN}^{(t)}(\pi^{r}|\m_{\sX_{\sinfty}})\right|=0.
\end{align*}
Now, by compactness we can choose subsequence $\epsilon_i$ such that $\pi^{\epsilon_i}$ converges weakly to an optimal coupling $\pi$ between $\mu_0$ and $\mu_1$. Since $\pi^{\epsilon}$ is a coupling between $\mu_0$ and $\mu_1$ for every $\epsilon>0$, we can apply again Lemma 3.3 from \cite{stugeo2} for upper semi-continuity of $T^{(t)}_{\sK,\sN}(\pi|\m_{\sX_{\sinfty}})$ in $\pi$.
Hence
\begin{align*}
S_{\sN}(\mu_{t}|\m_{\sX_{\infty}})\leq \liminf_{i\rightarrow \infty}
S_{\sN}(\mu^{r(\epsilon_i}_{t}|\m_{\sX_{\infty}})\leq \limsup_{i\rightarrow\infty} T_{\sK,\sN}^{(t)}(\pi^{\epsilon_i}|\m_{\sX_{\sinfty}})\leq T_{\sK,\sN}^{(t)}(\pi|\m_{\sX_{\sinfty}}).
\end{align*}
This finishes the proof for $K\leq0$.
\\

For $K>0$ we first prove the following preliminary result.

\begin{proposition}
Let $\{(M_i,o_i)\}_{i\in\mathbb{N}}$ be a sequence of smooth, normalized pmm spaces that satisfy the condition $CD(\kappa_i,N)$ for $\kappa_i\in C(X_i)$ 
such that
$$
k_{[M_i,o_i]}(p,K,R)\rightarrow 0 \mbox{ as } i\rightarrow \infty \ \ \forall R>0
$$
with $K>0$ and $p>\textstyle{\frac{N}{2}}$. 
Then $\{[M_i,o_i]\}_{i\in\mathbb{N}}$ subconverges in pmG sense to 
the isomorphism class of a pmm space $\pmms$ satisfying the $MCP(K,N)$.
\end{proposition}
\noindent
{\it Proof of the Proposition.} 
We already know that a limit pmm space $(X_\sinfty, o_\sinfty)$ exists. 

We fix $R\geq \pi_{K/(N-1)}$ and $x_{\sinfty}\in B_R^{\sinfty}\subset X_{\sinfty}$, and let $\mu\in \mathcal P(\m_{\sX_\sinfty})$ such that for some $\epsilon>0$ it holds $\mu(B_{\pi_{K/(N-1)}-\epsilon}(x_\sinfty))=1$.

Let $(Z,d_Z)$ be a metric space  where the mGH convergence of $\bar B_{2R}^i$ to $\bar B_{2R}^{\infty}$ is realized. Then, we can find a sequence $x_i\in B_{2R}^i$ such that $x_i\rightarrow x_\sinfty$ in $Z$ and $\bar B_{\pi_{K/(N-1)}-\epsilon}(x_i)$ converges in Hausdorff
 sense to $\bar B_{\pi_{K/(N-1)}-\epsilon}(x_\sinfty)$ in $Z$.  In particular $[\bar B_{\pi_{K/(N-1)}-\epsilon}(x_i)]$ converges in mG sense. Moreover, for $i\in \N$
  sufficiently large it holds $\bar B_{\pi_{K/(N-1)}-\epsilon}(x_i)\subset B_{2R}^i$.
We set $\bar B_{\pi_{K/(N-1)}-\epsilon}(x_i)=B^i$, $\m^i=\m_{X_i}|_{B^i}$, $\bar \m^i= \m^i(B^i)^{-1}\m^i$ for $i\in \bar\N$.

As in step {\bf 2.} of the proof for $K\leq 0$ we construct $Q^i:\mathcal{P}^2(\bar \m^\sinfty)\rightarrow\mathcal{P}^2(\bar \m^i)$ by
\begin{align*}
Q^i:\bar{\rho}_n\m^{\sinfty}\mapsto Q^i(\bar{\rho})\m^i \ \mbox{ where }\ Q^i(\bar{\rho})(y)=\int\bar{\rho}(x)Q^i(y,dx)
\end{align*}
The measure $Q^i(\mu)=\mu^i$  is concentrated in $\bar B_{\pi_{K/(N-1)}-\epsilon}(x_i)$.  By Remark \ref{rem:mcp} it holds
\begin{align}\label{ineq:iii}
S_{\sN}((e_t)_{\#}\Pi^i)&\leq -\int\tau_{\sK,\sN}^{(t)}(|\dot{\gamma}|)\rho_1(\gamma_1)^{-\frac{1}{N}}d\Pi^i(\gamma)\nonumber\\
&\hspace{0.7cm}+\m_{\sM}(B_{2R}(o))^{\frac{1}{N}}\Lambda^{\frac{1}{N}}C^{\frac{1}{N(2p-1)}} k_{[M,o]}(p,K,2R)^{\frac{p}{N(2p-1)}} \ \forall t\in (0,1).
\end{align}
where $\Pi^i$ is the unique optimal dynamical plan between $\delta_{x_i}$ and $\mu^i$.

At this point we can essentially repeat the steps from before and it holds
\begin{align}\label{ineq:iv}
S_{\sN}((e_t)_{\#}\Pi^{\sinfty})&\leq -\int\tau_{\sK,\sN}^{(t)}(|\dot{\gamma}|)\rho_1(\gamma_1)^{-\frac{1}{N}}d\Pi^{\sinfty}(\gamma) \ \forall t\in (0,1).
\end{align}
for an optimal dynamical plan $\Pi^{\sinfty}$ between $\delta_{x_{\sinfty}}$ and $\mu$ that is the limit of $(\Pi^i)_{i\in\N}$.

By another limiting process for $\epsilon\downarrow 0$ we also obtain this inequality for measures $\mu$ with $\mu(B_{\pi_{K/(N-1)}}(x_\sinfty))=1$.

Inequality \eqref{ineq:iv} is a version of the measure contraction property that was used in \cite{cavmonoptmap}. It implies the measure contraction property in the sense of Ohta by work of Rajala \cite{rajala2}.
\end{proof}
\begin{corollary} Let $(M_i,o_i)$ be as in the previous proposition. 
For every $\epsilon>0$ there exists $i_\epsilon \in \N$ such that  $\diam{\supp\m_{\sM_i}}< \pi_{K/(N-1)}+\epsilon$ for all $i\geq i_\epsilon$.
\end{corollary}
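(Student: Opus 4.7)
The plan is to argue by contradiction. Suppose there exist $\epsilon_0>0$ and a subsequence, which we still denote $(M_i,o_i)$, with $\diam\supp\m_{\sM_i}\geq \pi_{K/(N-1)}+\epsilon_0$ for all $i$. By the previous proposition, after a further extraction, this sequence converges in pmG sense to a limit $(X_\infty,o_\infty)$ satisfying $MCP(K,N)$; Ohta's diameter theorem then yields $\diam\supp\m_{\sX_\sinfty}\leq \pi_{K/(N-1)}$. The strategy is to exhibit two points of $\supp\m_{\sX_\sinfty}$ at mutual distance strictly greater than $\pi_{K/(N-1)}$, which contradicts this bound.

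First I would upgrade pmG to pmGH convergence on bounded balls using the uniform total boundedness provided by Corollary \ref{pointedprecompactness}. Next, since each $\supp\m_{\sM_i}$ is a geodesic space containing $o_i$, for every $i$ pick $a_i,b_i\in \supp\m_{\sM_i}$ with $d(a_i,b_i)\geq \pi_{K/(N-1)}+\epsilon_0-1/i$. Two cases arise. If $\max(d(o_i,a_i),d(o_i,b_i))\geq \pi_{K/(N-1)}+\epsilon_0/2$, then by the intermediate value theorem along a geodesic in $\supp\m_{\sM_i}$ from $o_i$ to the farther endpoint I obtain a point $a_i'\in\supp\m_{\sM_i}$ with $d(o_i,a_i')=\pi_{K/(N-1)}+\epsilon_0/2$, and I set $b_i':=o_i$. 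Otherwise both $a_i,b_i$ lie in $B_{\pi_{K/(N-1)}+\epsilon_0/2}(o_i)$, and I take $a_i':=a_i$, $b_i':=b_i$. In either case $a_i',b_i'\in B_R(o_i)\cap \supp\m_{\sM_i}$ for $R:=\pi_{K/(N-1)}+\epsilon_0$, and $d(a_i',b_i')\geq \pi_{K/(N-1)}+\epsilon_0/2 - 1/i$.

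Using the pmGH $\eta$-isometries $f_i^{R,\eta}\colon B_R(o_i)\to X_\infty$ with $\eta\downarrow 0$, subsequential limits $a_\infty'$ and $b_\infty'$ of $f_i(a_i')$ and $f_i(b_i')$ exist in $\bar B_R(o_\infty)$, and by property (ii) of Definition \ref{pmGH} they satisfy $d_{X_\infty}(a_\infty',b_\infty')>\pi_{K/(N-1)}$. The remaining and principal step is to show that $a_\infty',b_\infty'\in \supp\m_{\sX_\sinfty}$ and not merely in $X_\infty$, which matters because Ohta's bound only controls the support. In case (a) this is automatic for $b_\infty'=o_\infty$; in general I invoke Corollary \ref{bishopgromov}: because $a_i',b_i'$ lie in a fixed-radius ball around $o_i$ and $k_{[M_i,o_i]}(\kappa_i,p,K,R)\to 0$, the Bishop-Gromov estimate yields a uniform lower bound $\m_{\sM_i}(B_\delta(a_i'))\geq c(\delta)>0$ for every $\delta>0$, and similarly for $b_i'$. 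The weak convergence of measures in property (iv) of Definition \ref{pmGH} then transfers this to $\m_{\sX_\sinfty}(B_\delta(a_\infty'))>0$ for every $\delta>0$, placing $a_\infty'$, and likewise $b_\infty'$, in $\supp\m_{\sX_\sinfty}$. Combined with Ohta's diameter bound, this yields the desired contradiction.

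The principal obstacle I expect is exactly this last point, namely verifying that the limit points lie in the support of the limit measure rather than merely in the ambient limit space. The preceding case distinction based on the relative position of the near-diameter pair to $o_i$ is a minor bookkeeping matter, enabled by the geodesic-space structure of $\supp\m_{\sM_i}$ that the $CD(\kappa_i,N)$ hypothesis provides.
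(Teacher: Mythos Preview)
Your proposal is correct and follows essentially the same route as the paper: argue by contradiction, pass to an $MCP(K,N)$ limit via the preceding proposition, and invoke Ohta's Bonnet--Myers bound. The paper's proof is considerably terser---it simply asserts that the far-apart points $x_i,y_i$ subconverge to points $x,y$ in the limit with $d_{M_i}(x_i,y_i)\to d_X(x,y)$---whereas you carefully justify that the points can be taken in a fixed ball around $o_i$ (your case distinction) and that their limits lie in $\supp\m_{\sX_\sinfty}$ (via the volume comparison of Corollary~\ref{bishopgromov}). One minor remark: Corollary~\ref{bishopgromov} and Proposition~\ref{volumegrowth} are stated for $K\leq 0$, so when invoking them you should replace $K$ by $0$; this is harmless since $(\kappa-0)_-\leq (\kappa-K)_-$ for $K>0$, hence $k_{[M_i,o_i]}(\kappa_i,p,0,R)\to 0$ as well.
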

\begin{proof}
We can argue by contradiction. Assume the statement is false. Then there exists a subsequence that is also denoted by $(M_i,o_i)$ and  points $x_i,y_i\in \supp\m_{M_i}$ such that $ d_{M_i}(x_i.y_i)>\pi_{K/(N-1)}+\epsilon$.
By the previous proposition we can extract a subsequence that converges in pmG convergence to 
some $MCP(K,N)$ space $X$. In particular, $d_{M_i}(x_i,y_i)\rightarrow d_X(x,y)$ for points $x,y\in X$. That is not possible because of the Bonnet-Myers theorem for $MCP(K,N)$ spaces \cite{ohtmea}.
\end{proof}
\smallskip
Now we can proceed with the proof of Theorem \ref{resultA}.
\smallskip

\noindent 
{\bf 7.} Let $\epsilon>0$.  By relabeling the sequence we can assume $\diam\m_{\sM_i}\leq \pi_{K/(N-1)}+\frac{1}{i}$. Let $i\geq i_{\epsilon}$  such that  $\frac{1}{i}\leq \epsilon<1$. We go back to step {\bf 2} from before and pick $\N\ni n=R\geq \pi_{K/(N-1)}+1$.   Then $ B^i_n=M_i$ and $\bar \m_n^i=\bar \m_{M_i}$. We replace $\m_{M_i}$ with its normalisation $\bar\m_{M_i}$ and we can assume that $[M_i]$ converges in Gromov sense to $[X_\sinfty]$ by means of the previous corollary.

We define the map $Q^i=Q^i_n:\mathcal P(\m_{X_\sinfty})\rightarrow \mathcal P(\m_{M_i})$ as in step {\bf 2} before.
We set $$\Xi(\epsilon):=\Xi^i(\epsilon):=\{\gamma\in \mathcal G(M_i): L(\gamma)\leq \pi_{K/(N-1)}-\epsilon\}.$$
Then we can follow the proof of the case $K\leq 0. $
As in {\bf 3} let $\mu_j\in\mathcal{P}^2(\m_{\sX_{\sinfty}})$ with density $\rho_j$ for  $j=0,1$. Assume $0\leq \rho_j\leq r<\infty$ on $\supp\mu_j$. We consider $Q^i(\mu_j)=\mu^i_j$ with density $\rho^i_j$ w.r.t. $\m_{M_i}$.
Then
\begin{align*}
\int(\rho^i_t)^{-\frac{1}{N}}d\mu_t
&\geq \int_{\Xi(\epsilon)}\left[\tau_{\sK,\sN}^{(1-t)}(|\dot{\gamma}|)\rho^i_0(\gamma_0)^{-\frac{1}{N}}+\tau_{\sK,\sN}^{(t)}(|\dot{\gamma}|)\rho^i_1(\gamma_1)^{-\frac{1}{N}}\right]d\Pi^i(\gamma)\\
&\hspace{2cm}-2\Lambda(\epsilon)^{\frac{1}{N}}C^{\frac{1}{N(2p-1)}} k_{[M_i]}(p,K)^{\frac{p}{N(2p-1)}}.
\end{align*}
where $\Pi^i$ is the dynamical optimal plan between $\mu_0$ and $\mu_1$, and $(e_t)_{\#}\Pi^i=\rho_t^i \m_{M_i}$. We could just drop the integral over $\mathcal G(M_i)\backslash \Xi(\epsilon)$ since it is non-negative.

We estimate the first term in the right hand side. Let $\epsilon_0\geq \epsilon$.  Since $K>0$, the map $\theta\mapsto \tau_{\sK,\sN}^{(t)}(\theta)$ is monoton increasing and it holds
\begin{align*}
&\int_{\Xi(\epsilon)}\tau_{\sK,\sN}^{(1-t)}(|\dot{\gamma}|)\rho^i_0(\gamma_0)^{-\frac{1}{N}}d\Pi^i(\gamma)\\
&\hspace{1cm}\geq  \int_{\Xi(\epsilon)}\tau_{\sK,\sN}^{(1-t)}(|\dot{\gamma}|\wedge(\pi_{K/(N-1)}-\epsilon_0))\rho^i_0(\gamma_0)^{-\frac{1}{N}}d\Pi^i(\gamma)\\
&\hspace{1cm}=\int\tau_{\sK,\sN}^{(1-t)}(|\dot{\gamma}|\wedge(\pi_{K/(N-1)}-\epsilon_0))\rho^i_0(\gamma_0)^{-\frac{1}{N}}d\Pi^i(\gamma)\\
&\hspace{2cm}-\tau_{\sK,\sN}^{(1-t)}(\pi_{K/(N-1)}-\epsilon_0)\int_{\mathcal G(M_i)\backslash \Xi(\epsilon)}(\rho_0^i)(\gamma_0)^{-\frac{1}{N}}d\Pi^i(\gamma)
\end{align*}
We define $\hat \Pi^i:=\hat \Pi^i_{\epsilon}= \Pi^i(\mathcal G(M_i)\backslash \Xi(\epsilon))^{-1}\Pi^i|_{\mathcal G(M_i)\backslash \Xi(\epsilon)}$. It holds 
$$(e_0)_{\#}\Pi^i= (e_0)_{\#}\Pi^i|_{\Xi(\epsilon)} + (e_0)_{\#}\Pi^i|_{\mathcal G(M_i)\backslash \Xi(\epsilon)}\geq (e_0)_{\#}\Pi^i|_{\mathcal G(M_i)\backslash \Xi(\epsilon)}=  \Pi^i(\mathcal G(M_i)\backslash \Xi(\epsilon))\hat \mu_0^i$$
where $\hat \mu_0^i=(e_0)_{\#}\hat \Pi^i$. $\mu_0^i$ is $\m$-absolutely continuous.
Hence $$\Pi^i(\mathcal G(M)\backslash \Xi(\epsilon))^{-1}\rho_0^i \geq \hat \rho_0^i \ \ \m_{M_i}\mbox{-a.e. }.$$
Therefore, it follows 
\begin{align*}
\int_{\mathcal G(M^i)\backslash \Xi(\epsilon)} (\rho_0^i)(\gamma_0)^{-\frac{1}{N}}d\Pi^i(\gamma)&=\Pi^i(\mathcal G(M_i)\backslash \Xi(\epsilon))\int (\rho_0^i)^{-\frac{1}{N}}
(\gamma_0)d\hat \Pi^i\\
&\leq \Pi^i(\mathcal G(M_i)\backslash \Xi(\epsilon))^{1-\frac{1}{N}}\int (\hat \rho_0^i)^{-\frac{1}{N}}
(\gamma_0)d\hat \Pi^i(\gamma)
\\
&
\leq - S_{\sN}((e_0)_{\#}\hat \Pi^i).
\end{align*}
We obtain

\begin{align*}
\int(\rho^i_t)^{-\frac{1}{N}}d\mu_t
&\geq \left[\tau_{\sK,\sN}^{(1-t)}(\hat \theta(\gamma))\rho^i_0(\gamma_0)^{-\frac{1}{N}}+\tau_{\sK,\sN}^{(t)}(\hat \theta(\gamma))\rho^i_1(\gamma_1)^{-\frac{1}{N}}\right]d\Pi^i(\gamma)\\
&\hspace{0.5cm}-2\Lambda(\epsilon)^{\frac{1}{N}}C^{\frac{1}{N(2p-1)}} k_{[M_i]}(p,K)^{\frac{p}{N(2p-1)}}+\hat C(\epsilon_0)
 S_{\sN}((e_0)_{\#}\hat \Pi^i)
\end{align*}
where $\hat \theta(\gamma) = |\dot{\gamma}|\wedge (\pi_{\sK/(\sN-1)}-\epsilon)$ and $\hat C(\epsilon_0)=\tau_{\sK,\sN}^{(t)}(\pi_{\sK/(N-1)}-\epsilon_0) $ for any $\epsilon_0\geq \epsilon$.
\smallskip\\
{\bf 8.}
Let $\epsilon_i = \frac{1}{i}$.
We study the sequence of dynamical optimal  plans $\hat \Pi^i=:\hat \Pi^i_{\epsilon_i}$. After going to a subsequence $\hat \Pi^i_{\epsilon_i}$ converges weakly in $\mathcal P(\mathcal G(Z))$ to a dynamical optimal plan $\hat \Pi\in \mathcal P(X_\sinfty)$.  By weak convergence $\hat \Pi$ is supported on $\Xi_0=\{\gamma\in \mathcal G(X_\sinfty): |\dot\gamma|\geq \pi_{\sK/(\sN-1)}\}$. Now $\frac{1}{2}d^2_{X_\sinfty}$-monotonicity yields
\begin{align*}
2\pi_{\sK/(\sN-1)}^2=d(\gamma_0,\gamma_1)^2+d(\hat\gamma_0,\hat \gamma_1)^2\leq d(\gamma_0,\hat\gamma_1)^2+d(\hat\gamma_0,\gamma_1)^2\leq 2\pi_{\sK/(\sN-1)}^2
\end{align*}
for any pair of geodesics $\gamma, \hat\gamma\in \Xi_0$.
Therefore, any coupling between $\hat \mu_0=(e_0)_{\#}\hat \Pi$ and $\hat \mu_1=(e_1)_{\#}\hat \Pi$ is optimal and supported on $\Xi_0$. By uniqueness of opposite points we conclude that $\hat\mu_j=\delta_{x_j}$ for opposite points $x_0$ and $x_1$. 
Hence $S_{\sN}(\hat \mu_j|\m_{X_\sinfty})=0$, $j=0,1$,  and by lower semi-continuity of $S_{\sN}$ it follows
\begin{align*}
\liminf_{i\rightarrow \infty} S_{\sN}((e_0)_{\#}\hat \Pi^i_{\epsilon_i}|\m_{M_i})\geq 0
\end{align*}
\smallskip
\noindent 
{\bf 9.} At this point we can follow verbatime the proof for $K\leq 0$ to  obtain the following estimate for $\mu_0$, $\mu_1$ and dynamical optimal plan $\Pi$ in $X_\sinfty$: 
\begin{align*}
S_{\sN}((e_t)_{\#}\Pi|\m_{\sX_\sinfty})\leq - \int \left[ \tau_{\sK,\sN}^{(1-t)}(\hat \theta(\gamma))\rho_0(\gamma_0)^{-\frac{1}{N}} + \tau_{\sK,\sN}^{(t)}(\hat \theta(\gamma))\rho_1(\gamma_1)^{-\frac{1}{N}}\right]d\Pi(\gamma)
\end{align*}
where $\hat \theta(\gamma)=|\dot\gamma|\wedge (\pi_{\sK/(\sN-1)}-\epsilon_0)$. To finish the proof we use monotonicity of $\theta\mapsto \tau_{\sK,\sN}^{(t)}(\theta)$ for $K>0$ and the monotone convergence theorem to let $\epsilon_0\rightarrow 0$.

\begin{theorem}\label{th:noncol}
Let $\{(M_i,o_i)\}_{i\in\mathbb{N}}$ be a sequence of $n$-dimensional, pointed Riemannian manifolds that satisfy the condition $CD(\kappa_i,N)$ for $\kappa_i\in C(X_i)$ 
such that 
$$
k_{[M_i,o_i]}(p,K,R)\rightarrow 0 \mbox{ when  } i\rightarrow \infty \ \ \forall R>0
$$
with $K\in \R$,  $p>\textstyle{\frac{N}{2}}$, $p\geq 1$ if $N=2$ and \eqref{noncollapsing} holds.
Then $\{[M_i,o_i]\}_{i\in\mathbb{N}}$ subconverges in pmG sense to 
the isomorphism class of a pmm space $\pmms$ satisfying the Riemannian curvature-dimension condition $RCD(K,N)$.
\end{theorem}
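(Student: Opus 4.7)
The plan is to combine the main stability theorem (Theorem~\ref{th:mainmain}) with the almost splitting theorem of Tian and Zhang \cite{tianzhang}, which, under the hypothesis $k_{[M_i,o_i]}(\kappa_i,p,K,R)\to 0$, upgrades to a genuine splitting theorem on the limit. First I would apply Theorem~\ref{th:mainmain} to extract a pmG-convergent subsequence whose limit $\pmms$ is a normalized pmm space satisfying $CD(K,N)$. The remaining task is to upgrade this to $RCD(K,N)$, which amounts to establishing infinitesimal Hilbertianity of the Cheeger energy of $X$.

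Next I would verify that the noncollapsing condition \eqref{noncollapsing} persists in the limit. Since each $M_i$ is an unweighted Riemannian manifold, the reference measure $\m_{M_i}$ differs from $\vol_{M_i}$ only by the normalization constant $\m_{M_i}(B_1(o_i))^{-1}$, which is uniformly bounded above and below by the Bishop--Gromov-type estimate in Corollary~\ref{bishopgromov}. Weak convergence of the measures of balls, together with the volume lower bound transferred through any mGH-realization, then yields $\m_X(B_r(x))\geq \varkappa' r^n$ for all $x\in \supp \m_X$ and $r\in(0,1)$, uniformly in a scale-invariant way.

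I would then invoke the Tian--Zhang almost splitting theorem: it states that if the integral curvature excess of a sequence of noncollapsed Riemannian manifolds converges to zero, then the pmGH limit inherits a full splitting theorem, i.e.\ any line in $X$ induces an isometric metric measure splitting $X\cong \mathbb R\times Y$ with $Y$ satisfying $CD(K,N-1)$. Combined with $CD(K,N)$ from Theorem~\ref{th:mainmain} and the noncollapsed structure, the full Tian--Zhang regularity theory (density of Euclidean regular points, tangent cones of the form $\mathbb R^k\times C(Z)$, volume-cone-implies-metric-cone) then applies to $X$ exactly as in the classical Cheeger--Colding--Naber setting, and this forces the Cheeger energy $\ChX$ to be quadratic. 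With $CD(K,N)$ already at hand, infinitesimal Hilbertianity upgrades the condition to $RCD(K,N)$.

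The main obstacle is the final implication: passing from $CD(K,N)$ plus the noncollapsed splitting-type regularity to infinitesimal Hilbertianity. This step is not formal—it requires the full strength of the Tian--Zhang theory, since in the absence of the vanishing integral excess (as in Yang's example), the splitting theorem genuinely fails. Making this step rigorous reduces to checking that the arguments of Cheeger--Colding--Naber for quadraticity of the Cheeger energy in the non-collapsed Ricci-bounded setting carry over under the weaker integral curvature condition via the Tian--Zhang machinery, which is precisely the content the authors appeal to when they write ``the almost splitting theorem carries over to a splitting theorem of the limit space.''
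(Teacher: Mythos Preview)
Your overall strategy is correct and matches the paper's: get $CD(K,N)$ from Theorem~\ref{th:mainmain}, then use the Tian--Zhang almost splitting theorem to obtain infinitesimal Hilbertianity. However, the mechanism you propose for the last step is too vague, and in one place misplaced. You apply the splitting theorem to $X$ itself (``any line in $X$ induces $X\cong\mathbb{R}\times Y$''), but $X$ need not contain any line at all (e.g.\ when $K>0$, or $X$ compact). The splitting must be applied one level down, to \emph{tangent cones}, which arise as pmGH limits of rescaled sequences $(\epsilon_i M_i, x_i)$ and hence are again covered by Tian--Zhang.

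The paper makes this precise as follows. It invokes a concrete criterion (Proposition~6.7 in \cite{cdmeetscat}): a $CD(K,N)$ space is infinitesimally Hilbertian provided $\m_X$-a.e.\ point admits a tangent cone isometric to some $\mathbb{R}^d$. To verify this criterion, the paper uses Preiss's theorem on iterated tangents (for $\m_X$-a.e.\ $x$, every iterated tangent cone at $x$ already occurs as a tangent cone at $x$), together with the Tian--Zhang almost splitting theorem applied to blow-up sequences. Whenever a tangent cone contains a line, the almost splitting gives a factor $\mathbb{R}$; iterating and passing through Preiss yields a tangent cone of the form $\mathbb{R}^k$ at $\m_X$-a.e.\ point. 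Your appeal to ``Cheeger--Colding--Naber arguments for quadraticity of the Cheeger energy carry over'' is not a black box one can cite; the actual bridge is the criterion from \cite{cdmeetscat} plus the iterated-tangent argument. Once you insert that, your outline becomes the paper's proof.
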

\begin{proof}
We need to check that $X$ is infinitesimal Hilbertian. By \cite[Proposition 6.7]{cdmeetscat} it is enough to prove that for $\m_{\sX}$-a.e. point $x\in X$ a blow up tangent space exists that is isometric to $\R^d$ for some $d\in \N$.

To see this we can follow the same strategy as in the proof of \cite[Theorem 6.2]{cdmeetscat}  based on a theorem of Preiss that says for $\m_{\sX}$-a.e. point $x\in X$ an iterated tangent space at $x$ shows up as tangent space a $x$, and  an isometric splitting theorem for tangent spaces of $X$. 
The splitting theorem follows by the almost splitting theorem by Tian and Zhang in \cite{tianzhang}. For $x\in X$ and a tangent space $T_xX$ at $X$  there exists a blow up sequence $(\epsilon_i M_i, x_i)$ that converges in measured Gromov-Hausdorff sense to $T_xX$. 
If $T_xX$ contains a geodesic, in a small ball around $x_i$ we can find points $p_i^+, p_i^-$ such that 
\begin{align*}
d_{M_i}(p^+,x_i)+d_{M_i}(p^-_i,x_i)- d_{M_i}(p^+_i,p^-_i)\leq \delta_i d_{M_i}(p^+_i,p^-_i).
\end{align*}
for $\delta_i\rightarrow 0$. By the almost splitting theorem \cite[Theorem 2.31]{tianzhang} given $\epsilon>0$ we can choose $\delta_i>0$ sufficiently small such that $d_{GH}(B_\delta(x_i), B_\delta(0))\leq \epsilon$ where $B_{\delta}(0)\subset \R\times Y$ for some metric space $Y$ with $\diam Y\leq \pi$.
Hence, after rescaling appropriately and taking the limit for $\epsilon\rightarrow 0$, we see that $T_xX$ splits off $\R$ isometrically. 

By Preiss's theorem for $\m_{\sX}$-a.e. we can iterate this process and obtain some tangent cone $Z_x$ at $x$ that is isometric to $\R^k$ for some $k\leq n$.
This finishes the proof.
\end{proof}

\begin{corollary}
Let $X$ be a measure Gromov-Hausdorff limit of sequence of Riemannian manifolds satisfying \eqref{uniform2} and \eqref{noncollapsing}. Then every tangent space $T_xX$ for $x\in X$ satisfies the condition $RCD(0,N)$. In particular, $T_xX$ is an Euclidean cone over some $RCD(N-2,N-1)$ space $Y$.
\end{corollary}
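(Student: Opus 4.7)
By Theorem \ref{th:noncol} the limit space $X$ itself satisfies $RCD(K,N)$. Fix $x\in X$ and consider a sequence $r_i\downarrow 0$ realising a given tangent space $T_xX$ as the pmG-limit of the rescaled pointed metric measure spaces $(X, r_i^{-1}\de_X, \m_X(B_{r_i}(x))^{-1}\m_X, x)$. By Proposition \ref{suddenlyimportant}(ii) each rescaled space satisfies $CD(r_i^2 K, N)$, and since the Cheeger energy transforms by a scalar factor under rescaling of the distance and of the measure, infinitesimal Hilbertianity is preserved. Hence each rescaled space is $RCD(r_i^2 K, N)$, and stability of the $RCD$ condition under pmG convergence with varying parameters $r_i^2 K\to 0$ yields $T_xX\in RCD(0,N)$.

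For the cone factorisation, I would argue as follows. Under \eqref{noncollapsing} together with $k_{[M_j,o_j]}(\kappa_j,p,K,R)\to 0$, Bishop-Gromov monotonicity (Corollary \ref{bishopgromov}) forces the normalised volume ratio $\m_X(B_r(x))/v_{0,N}(r)$ at any fixed $x\in X$ to stabilise as $r\downarrow 0$; balls at successive scales are therefore almost-volume cones. Applying the almost-volume-cone-implies-almost-metric-cone theorem of Tian-Zhang \cite{tianzhang} on the approximating manifolds $M_j$ along a diagonal subsequence and passing to the pmG-limit yields a true metric cone structure on $T_xX$. Finally, the characterisation of warped products recalled in the introduction (Ketterer \cite{ketterer2}) implies that an $RCD(0,N)$ space which is a metric cone is isometric to a Euclidean cone over an $RCD(N-2,N-1)$ space $Y$, as claimed.

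The main obstacle is establishing the metric cone structure of $T_xX$ at every $x$, not merely at almost every $x$: the Tian-Zhang regularity theory is developed at the level of the approximating Riemannian manifolds and must be transferred to the limit in a point-wise fashion. The uniform noncollapsing condition \eqref{noncollapsing} is indispensable here since, as the example in Yang \cite{yang} shows, without it even the classical splitting theorem fails, and hence no cone rigidity can be expected.
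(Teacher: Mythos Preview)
Your first step contains a genuine gap: you invoke Theorem \ref{th:noncol} to conclude that the limit $X$ is $RCD(K,N)$, but the hypothesis of the corollary is only \eqref{uniform2}, namely a \emph{uniform bound} $k_{[M_i]}(\kappa_i,p,0)\leq \Lambda$, not the vanishing condition $k_{[M_i,o_i]}(\kappa_i,p,K,R)\to 0$ required by Theorem \ref{th:noncol}. Under \eqref{uniform2} alone there is no reason for $X$ to satisfy any curvature-dimension condition, so the rescaling argument you run on $X$ never gets off the ground.

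The paper's proof circumvents this by blowing up at the level of the \emph{approximating manifolds} rather than the limit. One picks $x_i\in M_i$ with $x_i\to x$ and considers the rescaled sequence $(\epsilon_i^{-1} M_i,x_i)$, which converges to $T_xX$ by a diagonal argument. The crucial computation is that the integral curvature excess of the rescaled manifolds satisfies
\[
k_{[\epsilon_i^{-1} M_i,\,x_i]}(\epsilon_i^{2}\kappa_i,\,p,\,0,\,R)\ \sim\ \epsilon_i^{2p-n}\,R^{2p}\int_{B_{\epsilon_i R}(x_i)}(\kappa_i)_-^p\,d\m_{M_i}\ \longrightarrow\ 0,
\]
because $2p>n$ and the integral on the right is uniformly bounded by \eqref{uniform2}. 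Thus the rescaled manifold sequence \emph{does} satisfy the hypotheses of Theorem \ref{th:noncol} (with $K=0$), and one concludes directly that $T_xX$ is $RCD(0,N)$. The metric cone structure is then simply quoted from Tian--Zhang \cite[Theorem 2.33]{tianzhang}, rather than reconstructed via volume-cone arguments as you attempt; and the identification of the cross section as an $RCD(N-2,N-1)$ space follows from \cite{ketterer2}, as you correctly note.
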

\begin{proof} 
By \cite[Theorem 2.33]{tianzhang} (blow up) tangent cones are always Euclidean cones.
For a  blow up sequence $(\epsilon_i M_i, o_i)$ we can compute that 
\begin{align*}
k_{[\epsilon_i M_i, o_i]}({\epsilon_i}\kappa_i, p,\epsilon_i K, R)&= k_{[M_i,x_i]}(\kappa_i, p, K, \epsilon_i R)\\
&= \frac{(\epsilon_i R)^{2p}}{\epsilon^n_i} \int (\kappa_i-K)_{-}^p d\m_{M_i}\rightarrow 0.
\end{align*}
Hence, by the previous theorem $T_oX$ satisfies $RCD(0,N)$. The last part follows from \cite{ketterer2}.
\end{proof}
\noindent
\small{
\bibliography{new}

\providecommand{\bysame}{\leavevmode\hbox to3em{\hrulefill}\thinspace}
\providecommand{\MR}{\relax\ifhmode\unskip\space\fi MR }
\providecommand{\MRhref}[2]{%
  \href{http://www.ams.org/mathscinet-getitem?mr=#1}{#2}
}
\providecommand{\href}[2]{#2}
\begin{thebibliography}{AGMR15}

\bibitem[AGMR15]{agmr}
Luigi Ambrosio, Nicola Gigli, Andrea Mondino, and Tapio Rajala,
  \emph{Riemannian {R}icci curvature lower bounds in metric measure spaces with
  {$\sigma$}-finite measure}, Trans. Amer. Math. Soc. \textbf{367} (2015),
  no.~7, 4661--4701. \MR{3335397}

\bibitem[AGS13]{agslipschitz}
Luigi Ambrosio, Nicola Gigli, and Giuseppe Savar{\'e}, \emph{Density of
  {L}ipschitz functions and equivalence of weak gradients in metric measure
  spaces}, Rev. Mat. Iberoam. \textbf{29} (2013), no.~3, 969--996. \MR{3090143}

\bibitem[AGS14a]{agsheat}
\bysame, \emph{Calculus and heat flow in metric measure spaces and applications
  to spaces with {R}icci bounds from below}, Invent. Math. \textbf{195} (2014),
  no.~2, 289--391. \MR{3152751}

\bibitem[AGS14b]{agsriemannian}
\bysame, \emph{Metric measure spaces with {R}iemannian {R}icci curvature
  bounded from below}, Duke Math. J. \textbf{163} (2014), no.~7, 1405--1490.
  \MR{3205729}

\bibitem[AMS19]{amsnonlinear}
Luigi Ambrosio, Andrea Mondino, and Giuseppe Savar\'{e}, \emph{Nonlinear
  diffusion equations and curvature conditions in metric measure spaces}, Mem.
  Amer. Math. Soc. \textbf{262} (2019), no.~1270, v+121. \MR{4044464}

\bibitem[Aub07]{aubry}
Erwann Aubry, \emph{Finiteness of {$\pi_1$} and geometric inequalities in
  almost positive {R}icci curvature}, Ann. Sci. \'Ecole Norm. Sup. (4)
  \textbf{40} (2007), no.~4, 675--695. \MR{2191529 (2010a:53047)}

\bibitem[BBI01]{bbi}
Dmitri Burago, Yuri Burago, and Sergei Ivanov, \emph{A course in metric
  geometry}, Graduate Studies in Mathematics, vol.~33, American Mathematical
  Society, Providence, RI, 2001. \MR{1835418 (2002e:53053)}

\bibitem[Cav14]{cavdec}
Fabio Cavalletti, \emph{Decomposition of geodesics in the {W}asserstein space
  and the globalization problem}, Geom. Funct. Anal. \textbf{24} (2014), no.~2,
  493--551. \MR{3192034}

\bibitem[CM16]{cavallettimilman}
Fabio {Cavalletti} and Emanuel {Milman}, \emph{{The Globalization Theorem for
  the Curvature Dimension Condition}}, arXiv e-prints (2016), arXiv:1612.07623.

\bibitem[CM17a]{cavmonoptmap}
Fabio Cavalletti and Andrea Mondino, \emph{Optimal maps in essentially
  non-branching spaces}, Commun. Contemp. Math. \textbf{19} (2017), no.~6,
  1750007, 27. \MR{3691502}

\bibitem[CM17b]{cavmon}
\bysame, \emph{Sharp and rigid isoperimetric inequalities in metric-measure
  spaces with lower {R}icci curvature bounds}, Invent. Math. \textbf{208}
  (2017), no.~3, 803--849. \MR{3648975}

\bibitem[CR18]{carronrose}
Gilles {Carron} and Christian {Rose}, \emph{{Geometric and spectral estimates
  based on spectral Ricci curvature assumptions}}, to appear in {\it Journal
  f\"ur die reine und angewandte Mathematik} (2018), arXiv:1808.06965.

\bibitem[CS12]{cavallettisturm}
Fabio Cavalletti and Karl-Theodor Sturm, \emph{Local curvature-dimension
  condition implies measure-contraction property}, J. Funct. Anal. \textbf{262}
  (2012), no.~12, 5110--5127. \MR{2916062}

\bibitem[DPW00]{dpw}
Xianzhe Dai, Peter Petersen, and Guofang Wei, \emph{Integral pinching
  theorems}, Manuscripta Math. \textbf{101} (2000), no.~2, 143--152.
  \MR{1742252}

\bibitem[DW04]{daiwei}
Xianzhe Dai and Guofang Wei, \emph{A heat kernel lower bound for integral
  {R}icci curvature}, Michigan Math. J. \textbf{52} (2004), no.~1, 61--69.
  \MR{2043396}

\bibitem[DWZ18]{dwzsobolev}
Xianzhe Dai, Guofang Wei, and Zhenlei Zhang, \emph{Local {S}obolev constant
  estimate for integral {R}icci curvature bounds}, Adv. Math. \textbf{325}
  (2018), 1--33. \MR{3742584}

\bibitem[EKS15]{erbarkuwadasturm}
Matthias Erbar, Kazumasa Kuwada, and Karl-Theodor Sturm, \emph{On the
  equivalence of the entropic curvature-dimension condition and {B}ochner's
  inequality on metric measure spaces}, Invent. Math. \textbf{201} (2015),
  no.~3, 993--1071. \MR{3385639}

\bibitem[Gal88]{gallotintegral}
Sylvestre Gallot, \emph{Isoperimetric inequalities based on integral norms of
  {R}icci curvature}, no. 157-158, 1988, Colloque Paul L\'{e}vy sur les
  Processus Stochastiques (Palaiseau, 1987), pp.~191--216. \MR{976219}

\bibitem[Gig15]{giglistructure}
Nicola Gigli, \emph{On the differential structure of metric measure spaces and
  applications}, Mem. Amer. Math. Soc. \textbf{236} (2015), no.~1113, vi+91.
  \MR{3381131}

\bibitem[GMS15]{gmsstability}
Nicola Gigli, Andrea Mondino, and Giuseppe Savar{\'e}, \emph{Convergence of
  pointed non-compact metric measure spaces and stability of {R}icci curvature
  bounds and heat flows}, Proc. Lond. Math. Soc. (3) \textbf{111} (2015),
  no.~5, 1071--1129. \MR{3477230}

\bibitem[Ket13]{ketterer}
Christian Ketterer, \emph{Ricci curvature bounds for warped products}, J.
  Funct. Anal. \textbf{265} (2013), no.~2, 266--299. \MR{3056704}

\bibitem[Ket15]{ketterer2}
\bysame, \emph{Cones over metric measure spaces and the maximal diameter
  theorem}, J. Math. Pures Appl. (9) \textbf{103} (2015), no.~5, 1228--1275.
  \MR{3333056}

\bibitem[Ket17]{ketterer5}
\bysame, \emph{On the geometry of metric measure spaces with variable curvature
  bounds}, J. Geom. Anal. \textbf{27} (2017), no.~3, 1951--1994. \MR{3667417}

\bibitem[KK19a]{cdmeetscat}
Vitali Kapovitch and Christian Ketterer, \emph{Cd meets cat}, Journal fuer die
  reine und angewandte Mathematik (2019), no.~0.

\bibitem[KK19b]{kkweak}
Vitali Kapovitch and Christian Ketterer, \emph{Weakly noncollapsed {RCD} spaces
  with upper curvature bounds}, Anal. Geom. Metr. Spaces \textbf{7} (2019),
  no.~1, 197--211. \MR{4034631}

\bibitem[KKK19]{kkk}
Vitali {Kapovitch}, Martin {Kell}, and Christian {Ketterer}, \emph{{On the
  structure of RCD spaces with upper curvature bounds}}, arXiv e-prints (2019),
  arXiv:1908.07036.

\bibitem[LS18]{lyst}
Alexander {Lytchak} and Stephan {Stadler}, \emph{{Ricci curvature in dimension
  2}}, arXiv e-prints (2018), arXiv:1812.08225.

\bibitem[LV09]{lottvillani}
John Lott and C{\'e}dric Villani, \emph{Ricci curvature for metric-measure
  spaces via optimal transport}, Ann. of Math. (2) \textbf{169} (2009), no.~3,
  903--991. \MR{2480619 (2010i:53068)}

\bibitem[McC95]{mccannexistence}
Robert~J. McCann, \emph{Existence and uniqueness of monotone measure-preserving
  maps}, Duke Math. J. \textbf{80} (1995), no.~2, 309--323.

\bibitem[McC01]{mccann}
\bysame, \emph{Polar factorization of maps on {R}iemannian manifolds}, Geom.
  Funct. Anal. \textbf{11} (2001), no.~3, 589--608. \MR{1844080 (2002g:58017)}

\bibitem[MR19]{mr}
Florentin {M{\"u}nch} and Christian {Rose}, \emph{{Spectrally positive
  Bakry-{\'E}mery Ricci curvature on graphs}}, arXiv e-prints (2019),
  arXiv:1912.06438.

\bibitem[Oht07]{ohtmea}
Shin-ichi Ohta, \emph{On the measure contraction property of metric measure
  spaces}, Comment. Math. Helv. \textbf{82} (2007), no.~4, 805--828.
  \MR{2341840 (2008j:53075)}

\bibitem[PSW97]{psw}
P.~Petersen, S.~D. Shteingold, and G.~Wei, \emph{Comparison geometry with
  integral curvature bounds}, Geom. Funct. Anal. \textbf{7} (1997), no.~6,
  1011--1030. \MR{1487752}

\bibitem[PW97]{petersenwei}
P.~Petersen and G.~Wei, \emph{Relative volume comparison with integral
  curvature bounds}, Geom. Funct. Anal. \textbf{7} (1997), no.~6, 1031--1045.
  \MR{1487753}

\bibitem[PW01]{petersenwei2}
Peter Petersen and Guofang Wei, \emph{Analysis and geometry on manifolds with
  integral {R}icci curvature bounds. {II}}, Trans. Amer. Math. Soc.
  \textbf{353} (2001), no.~2, 457--478. \MR{1709777}

\bibitem[Raj12]{rajala2}
Tapio Rajala, \emph{Interpolated measures with bounded density in metric spaces
  satisfying the curvature-dimension conditions of {S}turm}, J. Funct. Anal.
  \textbf{263} (2012), no.~4, 896--924. \MR{2927398}

\bibitem[Ros17]{rose2}
Christian Rose, \emph{Heat kernel upper bound on {R}iemannian manifolds with
  locally uniform {R}icci curvature integral bounds}, J. Geom. Anal.
  \textbf{27} (2017), no.~2, 1737--1750. \MR{3625171}

\bibitem[{Ros}19]{rosemyers}
Christian {Rose}, \emph{{Almost positive Ricci curvature in Kato sense -- an
  extension of Myers' theorem}}, arXiv e-prints (2019), arXiv:1907.07440.

\bibitem[RS14]{rajalasturm}
Tapio Rajala and Karl-Theodor Sturm, \emph{Non-branching geodesics and optimal
  maps in strong {$CD(K,\infty)$}-spaces}, Calc. Var. Partial Differential
  Equations \textbf{50} (2014), no.~3-4, 831--846. \MR{3216835}

\bibitem[RS17]{rosestollmann}
Christian Rose and Peter Stollmann, \emph{The {K}ato class on compact manifolds
  with integral bounds on the negative part of {R}icci curvature}, Proc. Amer.
  Math. Soc. \textbf{145} (2017), no.~5, 2199--2210. \MR{3611331}

\bibitem[RS18]{rosestollmannsurvey}
Christian {Rose} and Peter {Stollmann}, \emph{{Manifolds with Ricci curvature
  in the Kato class: heat kernel bounds and applications}}, arXiv e-prints
  (2018), arXiv:1804.04094.

\bibitem[RW20]{rosewei}
Christian {Rose} and Guofang {Wei}, \emph{{Eigenvalue estimates for Kato-type
  Ricci curvature conditions}}, arXiv e-prints (2020), arXiv:2003.07075.

\bibitem[Stu06a]{stugeo1}
Karl-Theodor Sturm, \emph{On the geometry of metric measure spaces. {I}}, Acta
  Math. \textbf{196} (2006), no.~1, 65--131. \MR{2237206 (2007k:53051a)}

\bibitem[Stu06b]{stugeo2}
\bysame, \emph{On the geometry of metric measure spaces. {II}}, Acta Math.
  \textbf{196} (2006), no.~1, 133--177. \MR{2237207 (2007k:53051b)}

\bibitem[{Stu}19]{sturmmv}
Karl-Theodor {Sturm}, \emph{{Distributional Valued Ricci Bounds for Metric
  Measure Spaces, Singular Time Changes, and Gradient Estimates for Neumann
  Heat Flows}}, arXiv e-prints (2019), arXiv:1910.13712.

\bibitem[TZ16]{tianzhang}
Gang Tian and Zhenlei Zhang, \emph{Regularity of {K}\"{a}hler-{R}icci flows on
  {F}ano manifolds}, Acta Math. \textbf{216} (2016), no.~1, 127--176.
  \MR{3508220}

\bibitem[Vil09]{viltot}
C{\'e}dric Villani, \emph{Optimal transport, old and new}, Grundlehren der
  Mathematischen Wissenschaften [Fundamental Principles of Mathematical
  Sciences], vol. 338, Springer-Verlag, Berlin, 2009. \MR{2459454
  (2010f:49001)}

\bibitem[Yan92a]{yang}
Deane Yang, \emph{Convergence of {R}iemannian manifolds with integral bounds on
  curvature. {I}}, Ann. Sci. \'{E}cole Norm. Sup. (4) \textbf{25} (1992),
  no.~1, 77--105. \MR{1152614}

\bibitem[Yan92b]{yang2}
\bysame, \emph{Convergence of {R}iemannian manifolds with integral bounds on
  curvature. {II}}, Ann. Sci. \'{E}cole Norm. Sup. (4) \textbf{25} (1992),
  no.~2, 179--199. \MR{1169351}

\end{thebibliography}

\bibliographystyle{amsalpha}}
\end{document}